\newtheorem{thm}{Theorem}[section]
\newtheorem{cor}[thm]{Corollary}
\newtheorem{lem}[thm]{Lemma}
\newtheorem{prop}[thm]{Proposition}
\theoremstyle{definition}
\newtheorem{defn}[thm]{Definition}
\theoremstyle{remark}
\newtheorem{rem}[thm]{Remark}
\newtheorem*{ex}{Example}
\numberwithin{equation}{section}
\numberwithin{equation}{section}
\numberwithin{thm}{section}
\numberwithin{equation}{section}
\numberwithin{thm}{section}
\begin{document}




\title[]
{On the geometry of the Clairin theory of conditional symmetries for higher-order systems of PDEs with applications}

\author[A.M. Grundland]{A.M. Grundland}
\address{Centre de Recherches Math\'ematiques, Universit\'e de Montr\'eal,\\
	Succ. Centre-Ville, CP 6128, Montr\'eal (QC) H3C 3J7, Canada \\
	Department of Mathematics and Computer Science, Universit\'e du Qu\'ebec \`a Trois-Rivi\`eres, \\
	CP 500, Trois-Rivi\`eres (QC) G9A 5H7, Canada}

\email{grundlan@crm.umontreal.ca}


\author{J. de Lucas}
\address{Department of Mathematical Methods in Physics, University of Warsaw, \\ ul. Pasteura 5, 02-093, Warszawa, Poland.}
\email{javier.de.lucas@fuw.edu.pl}
\subjclass{{35Q53} (primary); 35Q58, 53A05 (secondary)}

\keywords{Lie point symmetries, conditional symmetries, contact forms, PDE Lie systems, jet bundles, Clairin formalism, nonlinear wave equation, Gauss--Codazzi equations, generalised Liouville equation}

\date{}




\begin{abstract}This work presents a geometrical formulation of the Clairin theory of conditional symmetries for higher-order systems of partial differential equations (PDEs). We devise methods for obtaining Lie algebras of conditional symmetries from known conditional symmetries, and unnecessary previous assumptions of the theory are removed. As a consequence, new insights into other types of conditional symmetries arise.  We then apply the so-called PDE Lie systems to the derivation and analysis of Lie algebras of conditional symmetries. In particular, we develop a method for obtaining solutions of a higher-order system of PDEs via the solutions and geometric properties of a PDE Lie system, whose form gives a Lie algebra of conditional symmetries of the Clairin type. Our methods are illustrated with physically relevant examples such as nonlinear wave equations, the Gauss--Codazzi equations for minimal soliton surfaces, and generalised Liouville equations. 
\end{abstract}
\maketitle

%
%
%


\section{Introduction}\label{Introduction}\setcounter{equation}{0}
Over the last two centuries, Lie's theory of symmetries of partial differential equations (PDEs) \cite{Li80} has been the subject of extensive research in  mathematics and physics \cite{Li80,Ol93,St89,PW83}. During those years, the development of this theory has led to significant progress in classifying and solving differential equations, yielding many new interesting results (see e.g. \cite{ITV93} and references therein). A number of attempts to generalise this subject and to develop its applications can be found in the literature \cite{BC69,CG01,CGM04,CM04,Cl03,Fu93,GM04,GS07,GMR97,ITV93,MR03,MR08,Ol92,OR86,VK99,PW83,Zi93}. 

Of particular interest from a physical point of view has been the development of the theory of conditional symmetries which evolved in the process of extending Lie's classical theory of symmetries of PDEs \cite{BC69,Cl03,GMR97,GR95}. This approach consists essentially of supplementing the original system of PDEs with certain first-order differential constraints (DCs) for which a symmetry criterion is applied. As a result we obtain an overdetermined system of PDEs admitting, in some cases, a larger class of Lie point symmetries than the original system. Consequently, this approach enables us to construct new classes of solutions of the original system. The problem of the determination of such classes of explicit solutions was first solved by Clairin \cite{Cl03} in 1903 by subjecting the original system of PDEs to several DCs.
Since then, generalisations of conditional symmetries have been formulated by many authors \cite{BC69,GMR97,GR95,Ol92,OR86,OR87,Zi93}.

In the {\it Clairin theory of conditional symmetries} \cite{Cl03,GMR97,GR95}, 
the DCs are given by an {\it integrable} (in the sense of fulfilling a {\it compatibility condition} explained later and in \cite{Du10,Gu08}) first-order system of PDEs in {\it normal form}, i.e. such that the derivatives of particular solutions are functions of the dependent and independent variables, and  compatible  with the initial system of PDEs. Although there exist more general conditional symmetry methods based on adding DCs that need not give rise to first-order systems of PDEs in normal form \cite{CK89,Fu93,LW89,Ol92,OR87,PS92,Zi93}, the Clairin theory allows us to study the solutions of the DCs through many techniques. For instance, one can use different types of {\it Lie systems} \cite{GMR97,GR95} or a naturally related Abelian Lie algebra of Lie point symmetries, which generates new particular solutions of the initial system of PDEs from known ones satisfying the given DCs. 

In the Clairin theory, the DCs are determined by the zeroes of the characteristics of a Lie algebra $L$ of vector fields \cite{Ol93}. The Lie algebra $L$ consists of Lie symmetries, referred to as {\it Clairin conditional symmetries}, of the overdetermined system formed by the original system subjected to such DCs. The crux of the Clairin theory is to find $L$ \cite{GMR97,GR95,Ol92}. To simplify the task, $L$ is assumed to be Abelian and to admit a basis of a particular type \cite{GMR97,GR95}. Then, $L$ can be obtained by solving a nonlinear system of PDEs \cite{GR95,Ol92}.

The Clairin method, which is mainly described in terms of coordinates (cf. \cite{Cl03,GMR97,GR95}), lacks an intrinsic geometric formulation.  Additionally, there exists to our knowledge no detailed geometric formalism for studying higher-order systems of PDEs in this context, as works have, until now, focused on the formalism for first-order systems of PDEs \cite{Cl03,GR95} or on applying conditional symmetry techniques to  higher-order systems of PDEs without a detailed theoretical analysis \cite{Cl03,GMR97,GR95}.

Hence, our first aim is  to present a geometric Clairin theory of conditional symmetries for higher-order systems of PDEs. This allows us to avoid most previous  assumptions on $L$, to clarify some of the geometric features \cite{GMR97,GR95}, and, as a consequence, to provide new insights into other conditional symmetries and related structures \cite{Ol92,Zi93}. Our theory also provides methods for constructing new Lie algebras of conditional symmetries from known ones. Our second aim is to develop methods for obtaining conditional symmetries through the so-called {\it PDE Lie systems} and to apply them to the study of physical systems \cite{Dissertationes}.

Geometrically, the Clairin theory of conditional symmetries  for higher-order systems of PDEs can be summarised as follows. An $n$-th order system of PDEs whose dependent and independent variables are functions on $U$ and $X$, respectively, amounts to a subset $\mathcal{S}_\Delta$ of the $n$-th order jet bundle, $J^n$, of the bundle $\pi^0_{-1}:(x,u)\in X\times U\mapsto x\in X$ \cite{Ol93}. Lie algebras of conditional symmetries of $\mathcal{S}_\Delta$ are given by a Lie algebra $L$ of vector fields on $J^0=X\times U$ defining a submanifold $\mathcal{S}^n_L\subset J^n$, the so-called {\it characteristic system} of $L$, given by the common zeroes of the {\it total differentials} \cite{Ol93} of the characteristics of the elements of $L$ in such a way that $L$ consists of Lie point symmetries of the system of PDEs related to $\mathcal{S}^n_L\cap \mathcal{S}_\Delta$. The Clairin theory requires that $\mathcal{S}^n_L$ be related to a first-order  system of PDEs in normal form satisfying integrability conditions (sometimes called the {\it compatibility conditions}) \cite{Du10,Gu08}. If $L$ is Abelian and admits a basis of a particular type \cite{GMR97,GR95},  then $L$ can be derived by solving a nonlinear system of PDEs \cite{GMR97,GR95,Ol92}.

Apart from providing a careful geometric Clairin theory for conditional symmetries of higher-order systems of PDEs, let us describe other new contributions of our work.
 
First, by describing the characteristics of vector fields via the {\it contact forms} on $J^n$ \cite{Ol93,St89}, the characteristic system $\mathcal{S}^n_L$ for a linear space $L$ of vector fields on $J^0$ is given in Definition \ref{CharSys} or, equivalently, in Definition \ref{CharSysII} in an intrinsic geometrical way. This geometrises and generalises several types of Lie algebras of conditional vector fields \cite{GMR97,GR95,Zi93}. 

Theorems \ref{Th:SecSL1} and \ref{Th:SecSL} characterise when $\mathcal{S}^n_L$ can be described as a system of PDEs in normal form. If $\mathcal{S}^n_L$ is assumed to be a system of PDEs in normal form, then Theorem \ref{Th:ProlSL} provides necessary and sufficient conditions to ensure that the prolongations to $J^n$ (see \cite{Ol93}) of the vector fields of $L$ are tangent to $\mathcal{S}^n_L$. This fulfils results given in the previous literature,  where only necessary conditions are detailed \cite{GMR97,GR95,Ol92}. Remarkably, $L$ need not be a Lie algebra, as assumed previously \cite{GMR97,GR95}. Since our results state that $L$ must span a distribution $\mathcal{D}^L$ of dimension $q$ projecting onto $TX$, one obtains that this condition, in coordinates, amounts to the existence of a non-degenerate $p\times p$ matrix $\xi$ appearing in the Clairin formalism \cite{GMR97,GR95}.

Subsequently, Proposition \ref{Prop:SLInt} details the necessary and sufficient conditions on $L$ to ensure that $\mathcal{S}^n_L$, related to the system in normal form, is  {\it locally solvable} \cite{Ol93}. Corollary \ref{NorRel} shows that the standard conditions found in the literature \cite{Ol92} ensure that $\mathcal{S}^n_L$ is locally solvable. Remarkably, if $n>1$, then Theorem \ref{Th:SecSL} and Proposition \ref{Prop:SLInt} ensure that if $\mathcal{S}^n_L$ is a system of PDEs in normal form, then $\mathcal{S}^n_L$ is locally solvable, which also implies that the projection of $\mathcal{S}^n_L$ to $J^1$ satisfies the compatibility conditions \cite{Du10,Gu08}. This implies that the solutions of $\mathcal{S}_{\Delta}\cap \mathcal{S}^n_L$ are the same as those of the system of PDEs given by $ \mathcal{S}_{\Delta}\cap\mathcal{S}^1_L$ (where $\mathcal{S}^1_L$ is considered as a subspace of $J^n$ in the natural way \cite{VK99}). This is the usual approach appearing in the literature (see \cite{GMR97,GR95} and Section \ref{Appl}).

The Clairin approach to conditional symmetries  assumes that $L$ has a basis of a particular form \cite{Cl03,GMR97,GR95}.  Section \ref{Sec:NPDEL} characterises geometrically when $L$ admits such a basis. Such Lie algebras are called {\it rectified PDE Lie algebras}. Next, it is studied when a linear space of vector fields $L'$ is such that $\mathcal{S}^n_{L'}=\mathcal{S}^n_L$. If $L'$ is a Lie algebra, then it is called a {\it rectifiable PDE Lie algebra}.  

Next, we survey the  geometric properties of conditional Lie symmetries and raise certain technical questions frequently overlooked in the literature. We show that every normal PDE Lie 
algebra of conditional symmetries $L$ is such that every rectifiable PDE Lie algebra of the vector fields $L'$ satisfying $\mathcal{S}^n_{L'}=\mathcal{S}^n_L$  is also a Lie algebra of conditional Lie symmetries of the same system of PDEs. This allows us to obtain new Lie algebras of conditional symmetries from known ones.  

Finally, we study the differential equations characterising rectified PDE Lie algebras of conditional symmetries. To solve them, we extend the methods of \cite{GMR97,GR95} and provide conditions to ensure that rectified PDE Lie algebras can be derived via the so-called {\it PDE Lie systems} \cite{CGM07,OG00}. PDE Lie systems are first-order systems of PDEs in normal form admitting a {\it superposition rule}, i.e. a function allowing us to obtain their general solutions in terms of a generic family of particular solutions and some constants \cite{CGM07,OG00}. These systems of PDEs have attracted much attention lately  and their special structure allows for the development of methods to study their solutions \cite{CGL18,Dissertationes}.

Our use of PDE Lie systems to obtain Lie algebras of conditional symmetries is more general than the approach used in \cite{GMR97,GR95}, where only particular types of PDE Lie systems or standard Lie systems appear. Moreover, we provide assumptions on higher-order systems of PDEs, $\mathcal{S}_\Delta$, that enable us to construct a PDE Lie system describing some of its conditional symmetries (in the Clairin context) and whose solutions are solutions of $\mathcal{S}_\Delta$.

As applications, PDE Lie systems  are employed to study nonlinear wave equations \cite{GR95} and minimal surfaces for {\it Gauss--Codazzi equations} \cite{Sc41}. The fact that PDE Lie systems related to solvable Vessiot--Guldberg Lie algebras can be solved is employed to obtain minimal surfaces of Gauss--Codazzi equations (cf. \cite{CGM07,Dissertationes,OG00}). Finally, some solutions of the generalised Liouville equations \cite{Sa96} are provided. 

This paper is organised as follows. Section 2 describes the basic geometric tools employed in the work. Section 3 is concerned with the definition of a {\it characteristic system} for a linear space of vector fields on $X\times U$. Section 4 establishes when $\mathcal{S}^n_L$ amounts to an $n$-th order system of PDEs in normal form. Section 5 analyses certain geometric properties of characteristic systems. Section 6 focuses on the generality of a certain type of Lie algebra appearing in the Clairin theory of conditional symmetries. Section 7 studies Lie algebras of Clairin conditional symmetries. The use of PDE Lie systems to obtain conditional symmetries is developed in Section 8. In Section 9 several applications of our results are presented. Finally,  Section 10 summarises our new results, and  acknowledgements are detailed in Section 11.

\section{Geometric preliminaries on jet bundles and systems of PDEs}\label{GeomPre}\setcounter{equation}{0}

This section presents the notions of the theory of jet bundles and the notation to be employed in this work. This allows us to make our work more self-contained and easier to follow. Unless otherwise stated, we assume that mathematical structures are smooth and globally defined. 

Let $X$ and $U$ be manifolds of dimension $p$ and $q$, respectively.  We define $J^0=X\times U$ and $J^{-1}=X$, where $J^n$ stands for the $n$-th order jet bundle of the  trivial bundle  given by the projection map $\pi^0_{-1}:
X\times U\rightarrow X$ onto $X$. Let $\{x^1,\ldots, x^p\}$ and $\{u^1,\ldots,u^q\}$ be coordinate systems on $X$ and $U$, respectively. Such coordinate systems induce coordinates $u^\alpha_K$ on $J^n$, where $K=(k_1,\ldots,k_p)$ is a multi-index with $k_1,\ldots,k_p\in\mathbb{N}\cup \{0\}$  and $ |K|=\sum_{i=1}^pk_i\leq n$. Then, $\{x^i,u^\alpha,u^\alpha_K\}$, with $1\leq |K|\leq n$, becomes a local coordinate system of $J^n$. We write $u^\alpha_{K,i}$, with $i=1,\ldots,p$, for the coordinate $u^\alpha_{K'}$ where $k'_i=k_i+1$ and $k_j=k'_j$ for the indices $j\neq i$. 

If $n,m$ are integers and $n\geq m\geq -1$, then $J^n$ is a bundle over $J^m$ relative to the natural projection $\pi^n_m:J^n\rightarrow J^m$ and we write $\Gamma(\pi^n_m)$ for its space of sections.

We denote by ${\bf j}^n_x\sigma=(x,u,u^{(n)})$ an arbitrary point of $J^n$. Given a section $\sigma\in \Gamma(\pi^0_{-1})$, say $\sigma(x)=(x,u(x))$, its {\it prolongation} to $J^n$  is the section ${\bf j}^n\sigma\in \Gamma(\pi^n_{-1})$ of the form ${\bf j}^n\sigma(x)=(x^i,u^\alpha (x), \frac{\partial^r u^\alpha}{\partial x^{k_1}\ldots\partial x^{k_p}}(x))$ for $1\leq r=|(k_1,\ldots,k_p)|\leq n$.  Sections  $\sigma^{(n)}\in\Gamma(\pi^n_{-1})$ of the form $\sigma^{(n)}={\bf j}^n\sigma$ for a $\sigma\in \Gamma(\pi^0_{-1})$ are called {\it holonomic}. 

The main geometric structure on $J^n$, the so-called {\it Cartan distribution} $\mathcal{C}^{(n)}$, is the smallest distribution on $J^n$ tangent to all prolongations ${\bf j}^n\sigma$ for an arbitrary $\sigma\in\Gamma(\pi^0_{-1})$ \cite{Ol93}. In coordinates, $\mathcal{C}^{(n)}$ is spanned by the vector fields
$$
D_i=\frac{\partial}{\partial x^i}+\sum_{|K|\leq n-1}\sum_{\alpha=1}^qu^\alpha_{K,i}\frac{\partial}{\partial u^\alpha_K},\quad V_{\alpha K'}=\frac{\partial}{\partial u^\alpha_{K'}},\quad $$
where $i=1,\ldots,p$, $\alpha=1,\ldots,q$, $|K'|=n$, and $D_i$ is called the {\it total derivative} relative to  $x^i$ on $J^n$. The Cartan distributions are not involutive \cite{Ol93}. 
We write $D_K=D^{k_1}_{1}\circ\cdots\circ D^{k_p}_{p}$ for a multi-index $K$ and $D_K={\rm Id}$ when $|K|=0$.
 
A {\it contact form} on $J^n$ is a one-form $\theta$ on $J^n$ such that ${\bf j}^{n}\sigma^*\theta=0$ for every $\sigma\in\Gamma(\pi^0_{-1})$. Hence, contact forms allow us to determine when a section $\sigma^{(n)}\in \Gamma(\pi^n_{-1})$ is holonomic.  We write $\mathcal{CF}(J^n)$ for the space of contact forms on $J^n$. In particular, the one-forms
\begin{equation}\label{def:BasCon}
\theta^\alpha_K=du^\alpha_K-\sum_{i=1}^pu^\alpha_{K,i} dx^i,\qquad \alpha=1,\ldots, q,\qquad 0\leq |K|\leq n-1,
\end{equation}
are called the {\it basic contact forms} on $J^n$ relative to the coordinate system $\{x^i,u^\alpha,u^\alpha_K\}$, with $1\leq|K|\leq n$. On an open set of $J^n$ with coordinates $\{x^i,u^\alpha,u^\alpha_K\}$, contact forms are linear combinations with functions on $J^n$ of (\ref{def:BasCon}). In this sense, the basic contact forms (\ref{def:BasCon}) form a basis of $\mathcal{CF}(J^n)$.

A vector field $Y$ on $J^0$ can be written in local coordinates as
\begin{equation}\label{Y}
        Y=\sum_{i=1}^p\xi^i\frac{\partial}{\partial x^i}+\sum_{\alpha=1}^q\varphi^\alpha\frac{\partial}{\partial u^\alpha},
\end{equation}
where  $\xi^i,\varphi^\alpha$ are univocally defined functions on $J^0$. The {\it prolongation of $Y$ to $J^n$} is the only vector field, ${\bf j}^nY$, on $J^n$ leaving the space of vector fields taking values in $\mathcal{C}^{(n)}$ invariant (relative to the Lie bracket of vector fields) and projecting onto $Y$ via $\pi^n_0:J^n\rightarrow J^{0}$. Consequently, if  $\theta$ is a contact form on $J^n$, then the Lie derivative $\mathcal{L}_{{\bf j}^nY}\theta$ is also a contact form. The expression of $J^nY$ in coordinates\footnote{To provide an expression easy to deal with, it is assumed that functions and total derivatives are defined on $J^{n+1}$. Despite that, ${\bf j}^nY$  is a well-defined vector field on $J^n$. This approach is standard in the literature (cf. \cite[p. 110]{Ol93}).} reads \cite{Ol93,St89}
\begin{equation}\label{Prol}
\begin{gathered}
        {\bf j}^n\,Y=Y+\!\!\!\sum_{1\leq |K|\leq n}\sum_{\alpha=1}^q\psi_{YK}^\alpha\frac{\partial}{\partial u_K^\alpha},\quad 
        \psi^\alpha_{YK}=D_K Q_Y^\alpha+\sum_{i=1}^p\xi^i u^\alpha_{K,i}, \\ Q_Y^\alpha=\varphi^\alpha-\sum_{i=1}^p\xi^i u^\alpha_{i},
        \end{gathered}
\end{equation}
where the functions $Q_Y^\alpha$ are the so-called {\it characteristics} of the vector field $Y$. Geometrically, 
$$
Q^\alpha_Y=\iota_{{\bf j}^n\,Y}\theta^\alpha,\qquad \alpha=1,\ldots,q,
$$
where $\iota_Z\Upsilon$ stands for the contraction of a one-form $\Upsilon$ with a vector field $Z$.

First-order systems of PDEs in $p$ independent and $q$ dependent variables are defined by 
\begin{equation}
        \Delta^\mu({\bf j}^n_x\sigma)=0,\qquad \mu=1,\ldots,s\label{PDE},
\end{equation}
where $\Delta^\mu:J^n\rightarrow \mathbb{R}$ for $\mu=1,\ldots,s$ are certain functions. A particular solution of (\ref{PDE}) is a map $u(x)$ from $X$ to $U$ whose associated section $\sigma(x)=(x,u(x))\in \Gamma(\pi^0_{-1})$ is such that its prolongation to $J^n$  satisfies (\ref{PDE}). 

The system of PDEs (\ref{PDE}) determines a region ${\mathcal{S}_\Delta}\subset J^n$ where all the functions $\Delta^\mu$, with $\mu=1,\ldots,s$, vanish simultaneously. As is standard in the literature \cite{Ol93}, it is hereafter assumed that system (\ref{PDE}) has {\it maximal rank}, i.e. the functions $\Delta^\mu$ are functionally independent and  ${\mathcal{S}_\Delta}$ can be considered as a submanifold of $J^n$ (cf. \cite[p. 158]{Ol93}). A system of PDEs (\ref{PDE}) is {\it locally solvable} if for each ${\bf j}^n_x\sigma\in {\mathcal{S}_\Delta}$ there exists a solution $u(x)$ of the system (\ref{PDE}) such that ${\bf j}^n_x\sigma$ belongs to ${\bf j}^n\sigma_u(x)$, with $\sigma_u(x)=(x,u(x))$ \cite[p. 158]{Ol93}. Finally, we also assume that $\pi^n_{-1}(S_\Delta)=X$, which ensures that there exists at least one solution of $\mathcal{S}_\Delta$ defined around every $x\in X$.

A large family of differential equations is locally solvable and has maximal rank. To understand the generality of the local solvability assumption, we provide the following proposition characterising local solvability of first-order systems of PDEs in normal form. Similar results, with fewer details, can be found in \cite{GHZ05,Gu08}.

\begin{prop}\label{ZCC-LS} A first-order system of PDEs in normal form 
\begin{equation}\label{NormalSys}
\frac{\partial u^\alpha}{\partial x^i}=\phi^\alpha_i(x,u),\qquad i=1,\ldots,p,\quad \alpha=1,\ldots,q,
\end{equation}
is locally solvable if and only if it is integrable, i.e. $D_j\phi^\alpha_i=D_i\phi^\alpha_j$ for every $\alpha=1,\ldots,q$ and $i,j=1,\ldots,p$ \cite{Ol93}. In turn, its integrability amounts to the fact that the Lie algebra of vector fields spanned by 
	\begin{equation}\label{B}
Z_j=\frac{\partial}{\partial x^j}+\sum_{\alpha=1}^q\phi^\alpha_j\frac{\partial}{\partial u^\alpha},\qquad j=1,\ldots,p.
\end{equation}	
is Abelian.
\end{prop}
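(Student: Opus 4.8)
The plan is to prove the two equivalences in turn: first, local solvability $\iff$ integrability, and second, integrability $\iff$ the distribution spanned by the $Z_j$ being an Abelian Lie algebra. For the first equivalence, I would argue as follows. Suppose the system \eqref{NormalSys} is locally solvable. Fix a point ${\bf j}^1_x\sigma\in\mathcal{S}_\Delta$, pick a solution $u(x)$ passing through it, and differentiate the identity $\partial u^\alpha/\partial x^i=\phi^\alpha_i(x,u(x))$ with respect to $x^j$; by equality of mixed partials $\partial^2 u^\alpha/\partial x^j\partial x^i=\partial^2 u^\alpha/\partial x^i\partial x^j$, the chain rule yields $D_j\phi^\alpha_i=D_i\phi^\alpha_j$ evaluated at the $1$-jet of that solution. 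Since every point of the submanifold $\mathcal{S}_\Delta$ of $J^1$ lies on the prolongation of some solution, the functions $D_j\phi^\alpha_i-D_i\phi^\alpha_j$ vanish on all of $\mathcal{S}_\Delta$; but for a system in normal form $\mathcal{S}_\Delta$ projects diffeomorphically onto $J^0=X\times U$, and the $D_j\phi^\alpha_i$ involve only $x,u$-dependence (the first-order coordinates having been substituted out via $\phi$), so $D_j\phi^\alpha_i=D_i\phi^\alpha_j$ holds identically as functions on $X\times U$. Conversely, if the integrability conditions hold identically, one invokes the classical Frobenius-type existence theorem for total differential equations (e.g. \cite[Thm 2.73]{Ol93}) to produce, through each $(x_0,u_0)$, a local solution, which is exactly local solvability.

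For the second equivalence I would translate the integrability conditions into a bracket computation. Writing $Z_j=\partial_{x^j}+\sum_\alpha \phi^\alpha_j\,\partial_{u^\alpha}$, a direct computation of the Lie bracket gives
\begin{equation*}
[Z_i,Z_j]=\sum_{\alpha=1}^q\left(Z_i(\phi^\alpha_j)-Z_j(\phi^\alpha_i)\right)\frac{\partial}{\partial u^\alpha},
\end{equation*}
and one checks that $Z_i(\phi^\alpha_j)=\partial_{x^i}\phi^\alpha_j+\sum_\beta\phi^\beta_i\,\partial_{u^\beta}\phi^\alpha_j$ is precisely $D_i\phi^\alpha_j$ restricted to $\mathcal{S}_\Delta$ (again using that on the normal-form submanifold $u^\beta_i$ is replaced by $\phi^\beta_i$). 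Hence $[Z_i,Z_j]=\sum_\alpha(D_i\phi^\alpha_j-D_j\phi^\alpha_i)\,\partial_{u^\alpha}$, so the brackets all vanish if and only if the integrability conditions hold. Since the $Z_j$ are pointwise linearly independent (their $\partial_{x^j}$-parts already are), vanishing of all brackets is equivalent to the span being an Abelian Lie algebra, and the reverse implication is immediate. Assembling the two equivalences gives the proposition.

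The main obstacle — more a point requiring care than a genuine difficulty — is the passage from "the integrability expressions vanish on $\mathcal{S}_\Delta$" to "they vanish identically on $X\times U$," i.e.\ being precise about the identification $\mathcal{S}_\Delta\cong X\times U$ for systems in normal form and about the fact that $D_j\phi^\alpha_i$, once the derivative coordinates are eliminated by the substitution $u^\beta_k\mapsto\phi^\beta_k(x,u)$, descends to a well-defined function on $J^0$. This is exactly where the normal-form hypothesis is used and is what makes the "only if" direction of the first equivalence clean. The rest is a bounded bracket calculation and an appeal to the classical existence theorem for integrable total differential systems, both of which are standard.
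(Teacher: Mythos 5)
Your proposal is correct and follows essentially the same route as the paper: equality of mixed partials along solutions yields the compatibility conditions \eqref{DiffCon}, which (holding identically on $X\times U$ because a solution passes through every initial condition and the substituted expressions depend only on $(x,u)$) are identified with the vanishing of the brackets $[Z_i,Z_j]$, and the converse direction is the Frobenius-type existence theorem, which the paper packages as integrability of the distribution spanned by the commuting vector fields \eqref{B} whose integral manifolds are graphs of solutions. Your added care about the substitution $u^\beta_k\mapsto\phi^\beta_k(x,u)$ and the identification $\mathcal{S}_\Delta\cong X\times U$ only makes explicit what the paper does implicitly.
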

\begin{proof} Geometrically, the system (\ref{NormalSys}) amounts to a submanifold $\mathcal{S}_\Delta$ given by points of the form $(x^i,u^\alpha,\phi^\alpha_i(x,u))$ of $J^1$. The system (\ref{NormalSys}) has a particular solution passing through each point $(x_0^i,u_0^\alpha,\phi^\alpha_i(x_0,u_0))$ in $\mathcal{S}_\Delta$ if and only if the system (\ref{NormalSys}) admits a particular solution $u(x)$ for each initial condition $(x_0,u_0)\in X\times U$. If such a particular solution exists, then the cross partial derivatives of $u(x)$ relative to any two independent variables coincide. Using (\ref{NormalSys}), we obtain that
	\begin{equation}\label{DiffCon}
	\phi^\alpha_{j,k}-\phi^\alpha_{k,j}+\sum_{\beta=1}^q(\phi^\alpha_{j,u^\beta}\phi^\beta_{k}-\phi^\alpha_{k,u^\beta}\phi^\beta_j)=0,\qquad  j,k=1,\ldots,p,\quad\alpha=1,\ldots,q,
	\end{equation}
      The latter amounts to the condition $D_j\phi^\alpha_i=D_i\phi^\alpha_j$ for every $\alpha=1,\ldots,q$ and $i=1,\ldots,p$. Hence, system (\ref{NormalSys}) is integrable. 

      Note that the integrability of system (\ref{NormalSys}), namely (\ref{DiffCon}), amounts to 
      	$$
	[Z_k,Z_j]=\sum_{\alpha=1}^q\left(\phi^\alpha_{j,k}-\phi^\alpha_{k,j}+\sum_{\beta=1}^q(\phi^\alpha_{j,u^\beta}\phi^\beta_{k}-\phi^\alpha_{k,u^\beta}\phi^\beta_j)\right)\frac{\partial}{\partial u^\alpha}=0.
	$$
	Hence, if (\ref{NormalSys}) is integrable, then the distribution spanned by $Z_1,\ldots,Z_r$ is integrable and its integral submanifolds give solutions of (\ref{NormalSys}) for every point $(x_0,u_0)\in J^0$. This implies that (\ref{NormalSys}) is locally solvable.
%
\end{proof}

A vector field $Y$ on $J^0$ is a {\it Lie point symmetry} of the system of PDEs (\ref{PDE}) if 
\begin{equation}\label{Sys}
        ({\bf j}^n\,Y)\,\Delta^\mu|_{\mathcal{S}_\Delta}=0, \qquad \mu=1,\ldots,s.
\end{equation}
As the prolongation to $J^n$ of the Lie bracket of two vector fields on $X\times U$ is the Lie bracket of their prolongations to $J^n$ \cite{Ol93}, the Lie bracket of two Lie point symmetries of $\Delta$ is a new Lie point symmetry. Thus, Lie point symmetries form a Lie algebra $V_s$, which, when finite-dimensional, locally defines a Lie group action on $J^0$. This Lie group action transforms solutions of (\ref{PDE}) into solutions of the same equation and allows for the reduction of the initial systems of PDEs \cite{St89}.

\section{A definition of characteristic systems for linear spaces of vector fields}

The description of Lie algebras of conditional symmetries for systems of PDEs is based on the geometry of the {\it characteristic system}, namely a submanifold of $J^n$ to which we restrict the study of solutions of our initial system of PDEs \cite{GMR97,GR95,OV95,Zi93}. As shown in this section, the literature on conditional symmetries lacks a purely geometric definition of characteristic systems and the study of characteristics systems for higher-order PDEs is scarcely considered \cite{GMR97,GR95,Ol92,Zi93}. This section aims to fill this gap. Moreover, our definition is not limited to characteristic systems for Lie algebras of vector fields \cite{Ol92,Zi93}, which will allow us to dispense, in the next sections, with certain unnecessary technical conditions on characteristic systems given in previous works \cite{GMR97,GR95,Zi93}.

\begin{defn}\label{CharSys}
Let $L$ be a finite-dimensional linear space of vector fields on $X\times U$. The {\it characteristic system of $L$} in $J^n$ is the subset of $J^n$  given by
$$
\mathcal{S}^n_L=\{{\bf j}^n_x\sigma\in J^n:[\iota_{{\bf j}^n\,Y}\theta]({\bf j}^n_x\sigma)=0,\forall Y\in L,\forall \theta \in \mathcal{CF}(J^n)\},
$$
where $\mathcal{CF}(J^n)$ is the space of contact forms on $J^n$.
\end{defn}

Let us describe Definition \ref{CharSys} in coordinates in order to compare it with previous definitions \cite{GMR97,GR95,Ol92,Zi93} and to justify the term `characteristic system'.

 If $L$ admits a basis (as a linear space) of the form $Y_j=\sum_{k=1}^p\xi_j^k\partial/\partial x^k+\sum_{\alpha=1}^q\varphi^\alpha_j\partial/\partial x^\alpha$, with $j=1,\ldots,l$, then the basis $\theta_K^\alpha$ of $\mathcal{CF}(J^n)$, with $\alpha=1,\ldots, q$ and $|K|\leq n-1$, allows us, by using (\ref{Prol}), to write that
$$
\begin{gathered}
\iota_{{\bf j}^nY_j}\theta^\alpha=\varphi^\alpha_j-\sum_{i=1}^pu^\alpha_i\xi_j^i,\\
\iota_{{\bf j}^nY_j}\theta_K^\alpha=\psi_{Y_jK}^\alpha-\sum_{i=1}^pu^\alpha_{K,i}\xi_j^i=D_K\left(\varphi^\alpha_j-\sum_{i=1}^pu^\alpha_i\xi_j^i\right),
\end{gathered}
$$
for $\alpha=1,\ldots,q,j=1,\ldots,l,$ and $1\leq |K|\leq n-1$.
Hence, the functions $\iota_{{\bf j}^nY_j}\theta^\alpha$, where $\alpha=1,\ldots,q$, are the characteristics $Q^\alpha_{Y_j}$ of the vector field $Y_j$ and the $\iota_{{\bf j}^nY_j}\theta^\alpha_K=D_KQ^\alpha_{Y_j}$ are all the compositions of up to $n-1$ total derivatives of the characteristics $Q^\alpha_j$ of $Y_j$. Then, ${\bf j}^n_x\sigma\in \mathcal{S}_L^n$ if and only if $[\iota_{{\bf j}^n\,Y_j}\theta_K^\alpha]({\bf j}^n_x\sigma)=0$ for $j=1,\ldots,l$ and the basis of basic contact forms $\theta^\alpha_K$ on $J^n$ with $\alpha=1,\ldots,q$ and $0\leq |K|\leq n-1$. Consequently, $\mathcal{S}^n_L$ is the subset of $J^n$ where vanish all total derivatives up to order $n-1$ of the characteristics of a basis of vector fields of $L$. 

In the theory of conditional symmetries, a characteristic system in $J^n$ is mostly defined to be the subset of zeros of the characteristics (and their total derivatives up to order $n-1$) of a basis of a Lie algebra of vector fields \cite{GMR97,GR95,Ol92,OV95,Zi93}. Hence, Definition \ref{CharSys} reproduces the standard definition when $L$ is a Lie algebra. Our definition is purely geometrical as it does not rely on the use of characteristics of a basis of $L$ and it does not need to give their coordinate expressions as in previous works.

Some definitions of characteristic systems demand additional technical conditions on the elements of $L$ (see \cite{Zi93}). The reasons to assume these conditions will be explained in the following sections. Meanwhile, Olver  and Roseneau proposed a very general definition of  differential constraints, the so-called {\it side conditions},  for studying higher-order systems of PDE \cite{Ol92,OR87}. This generalisation covers Definition \ref{CharSys} as a particular case. Nevertheless, Olver and Roseneau's definition is not linked to characteristics of vector fields due to its generality  \cite[p. 15]{Ol92}, which makes it inappropriate to the study of conditional symmetries, which are strongly related to characteristics.

Note that since $\mathcal{S}^n_L\subset J^n$ can be understood as the $n$-th order system of PDEs determined by the characteristics and their successive total differentials up to order $n-1$ of the elements of a linear space of vector fields $L$, it makes sense to call $\mathcal{S}^n_L$ a characteristic system.  

As the Cartan distribution $\mathcal{C}^{(n)}$ is the intersection of all the kernels of contact forms on $J^n$, the definition of $\mathcal{S}^n_L$ can be rewritten in the following dual equivalent manner.
\begin{defn}\label{CharSysII}
	Let $L$ be a finite-dimensional linear space of vector fields on $X\times U$. The {\it characteristic system of $L$} in $J^n$ is the subset $\mathcal{S}^n_L\subset J^n$ where the prolongations to $J^n$ of the vector fields of $L$ take values in the Cartan distribution $\mathcal{C}^{(n)}$.

\end{defn}

\section{On characteristic systems and sections of jet bundles}

In the Clairin approach to the theory of conditional symmetries \cite{Cl03,GMR97,GR95}, characteristic systems are employed to study systems of PDEs in normal form. This section studies this relation in geometric terms.

Note that $\mathcal{S}_L^n$ does not need to be a submanifold of $J^n$. For instance, if $X=\mathbb{R},U=\mathbb{R}$ and $L=\langle x_1(u_1^2-u_1)\partial_{u^1}\rangle$, then $\mathcal{S}^1_L$ is not a submanifold of $J^1$.  Nevertheless, in the next sections, ${\mathcal{S}^n_L}$ will be proven to be a submanifold due to the nature of the Clairin theory of conditional symmetries \cite{Cl03,GMR97,GR95}. The following results will explain the geometrical meaning of such assumptions and specify which of them can be dismissed. We begin with the following lemma.

\begin{lem} \label{CharacLin} Let $Y_1,\ldots, Y_l$ be a basis of a linear space $L$ of vector fields on $X\times  U$. If $f^1,\ldots,f^l\in C^\infty(X\times U)$, then one has the following equality at points of  $\mathcal{S}^n_L$:
	\begin{equation}\label{eq}
	{\bf j}^n\left(\sum_{j=1}^lf^jY_j\right)=      \sum_{j=1}^lf^j{\bf j}^nY_j,\qquad \alpha=1,\ldots,q,
	\end{equation}
	where the functions $f^1,\ldots, f^l$ on the right-hand side are considered to be functions on $J^n$ in the natural way, namely as functions on $J^n$ depending only on $x$ and $u$ (see \cite{VK99} for further details).
\end{lem}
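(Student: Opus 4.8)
The plan is to reduce the identity to a pointwise statement about the prolongation formula \eqref{Prol} and then exploit the defining property of $\mathcal{S}^n_L$, namely that on this set all characteristics of the $Y_j$ (and their total derivatives up to order $n-1$) vanish. First I would write $Y=\sum_{j=1}^l f^jY_j$ in the coordinates \eqref{Y}, so that its components are $\xi^i=\sum_j f^j\xi^i_j$ and $\varphi^\alpha=\sum_j f^j\varphi^\alpha_j$, and hence its characteristics are $Q^\alpha_Y=\sum_j f^j Q^\alpha_{Y_j}$, since the characteristic operator $Y\mapsto Q^\alpha_Y=\varphi^\alpha-\sum_i\xi^i u^\alpha_i$ is $C^\infty(J^0)$-linear in the components of $Y$. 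The point $\pi^n_0$-over which everything happens is the same for $Y$ and each $Y_j$, so comparing ${\bf j}^nY$ with $\sum_j f^j{\bf j}^nY_j$ amounts to comparing, for each $\alpha$ and each multi-index $K$ with $1\le|K|\le n$, the coefficient $\psi^\alpha_{YK}=D_KQ^\alpha_Y+\sum_i\xi^i u^\alpha_{K,i}$ against $\sum_j f^j\psi^\alpha_{Y_jK}=\sum_j f^j\big(D_KQ^\alpha_{Y_j}+\sum_i\xi^i_j u^\alpha_{K,i}\big)$.

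The second step is to observe that the terms $\sum_i\xi^i u^\alpha_{K,i}$ and $\sum_j f^j\sum_i\xi^i_j u^\alpha_{K,i}$ agree identically on all of $J^n$, because $\xi^i=\sum_j f^j\xi^i_j$; these pieces contribute nothing to the discrepancy. Thus the whole difference is controlled by $D_KQ^\alpha_Y-\sum_j f^jD_KQ^\alpha_{Y_j}=D_K\big(\sum_j f^jQ^\alpha_{Y_j}\big)-\sum_j f^jD_KQ^\alpha_{Y_j}$. Expanding $D_K$ by the Leibniz rule, this is a sum of terms in which at least one total derivative $D_L$ with $0\le|L|\le|K|-1\le n-1$ hits a factor $Q^\alpha_{Y_j}$ (the remaining derivatives landing on $f^j$). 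Here the hypothesis enters decisively: at every point of $\mathcal{S}^n_L$, and for every $j$, $\alpha$, and every multi-index $L$ with $0\le|L|\le n-1$, we have $(D_LQ^\alpha_{Y_j})|_{\mathcal{S}^n_L}=0$ — this is precisely the coordinate description of $\mathcal{S}^n_L$ recorded after Definition \ref{CharSys}, since $D_LQ^\alpha_{Y_j}=\iota_{{\bf j}^nY_j}\theta^\alpha_L$. Every term in the Leibniz expansion of $D_K(\sum_j f^jQ^\alpha_{Y_j})-\sum_j f^jD_KQ^\alpha_{Y_j}$ therefore vanishes on $\mathcal{S}^n_L$, which gives $\psi^\alpha_{YK}|_{\mathcal{S}^n_L}=\sum_j f^j\psi^\alpha_{Y_jK}|_{\mathcal{S}^n_L}$ for all $\alpha$ and all $K$ with $|K|\le n$, and hence the claimed equality \eqref{eq} of vector fields at points of $\mathcal{S}^n_L$.

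I expect the only mildly delicate point to be bookkeeping in the Leibniz expansion: one must be sure that after pulling one derivative off an undifferentiated $Q^\alpha_{Y_j}$, the surviving derivatives on that factor form a multi-index of order at most $n-1$ (so that the vanishing hypothesis applies), and that no term survives in which $Q^\alpha_{Y_j}$ is differentiated $|K|$ times with $|K|=n$ — but such a term cannot occur, because that would leave zero derivatives on $f^j$, which is exactly the subtracted term $\sum_j f^jD_KQ^\alpha_{Y_j}$, already removed. A clean way to phrase this without writing the multinomial coefficients is to argue by induction on $|K|$ using $D_i(fg)=(D_if)g+f(D_ig)$, peeling off one total derivative at a time and invoking the inductive hypothesis together with $Q^\alpha_{Y_j}|_{\mathcal{S}^n_L}=0$ and $(D_iQ^\alpha_{Y_j})|_{\mathcal{S}^n_L}=0$, etc.; alternatively one may invoke Definition \ref{CharSysII}, noting that both sides of \eqref{eq} take values in $\mathcal{C}^{(n)}$ over $\mathcal{S}^n_L$ and project to the same vector field $Y$ on $J^0$, so they can differ only by a $\mathcal{C}^{(n)}$-valued vertical field, which the characteristic computation shows to vanish. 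Either route makes the result essentially a formal consequence of the structure of \eqref{Prol}, with no real obstacle beyond careful notation.
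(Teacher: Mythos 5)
Your proposal is correct and follows essentially the same route as the paper: linearity of the characteristics, the Leibniz rule for total derivatives, and the vanishing on $\mathcal{S}^n_L$ of all $D_LQ^\alpha_{Y_j}$ with $|L|\le n-1$ to conclude that the prolongation coefficients $\psi^\alpha_{YK}$ agree there. Your explicit Leibniz bookkeeping (covering all $|K|\le n$, with the undifferentiated-$Q$ term being precisely the subtracted one) just spells out a step the paper's proof states more tersely.
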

\begin{proof} In coordinates we write
	\begin{equation}\label{basisL}
	Y_j=\sum_{i=1}^{p}\xi_j^i\frac{\partial}{\partial x^i}+\sum_{\alpha=1}^{q}\varphi_j^\alpha\frac{\partial}{\partial
		u^\alpha}
	\end{equation} 
	for $j=1,\ldots,l$. To simplify the notation, we write $\bar Y=\sum_{j=1}^lf^jY_j$.	In view of (\ref{Prol}), one has that 
	$
	Q^\alpha_{\bar Y}=\sum_{j=1}^lf^jQ^\alpha_{Y_j}.
	$
	Since all total derivatives $D_K$ of order $|K|\leq n-1$ of the characteristics of $Y_1,\ldots,Y_l$ vanish on $\mathcal{S}^n_L$, we obtain that 
	$
	D_KQ^\alpha_{\bar Y}=\sum_{j=1}^lf^jD_KQ^\alpha_{Y_j}
	$ for $|K|\leq n-1$ and $\alpha=1,\ldots,q$. 
	In view of (\ref{Prol}), one has that $\psi^\alpha_{\bar YK}=\sum_{j=1}^lf^j\psi^\alpha_{Y_jK}$, for  $\alpha=1,\ldots,q$, $|K|\leq n-1$, and formula (\ref{eq}) follows easily from the expression (\ref{Prol}) for the prolongations to $J^n$ of  $Y_1,\ldots,Y_l$. 
\end{proof}
We have already explained that $\mathcal{S}^n_L$ amounts to an $n$-th order system of PDEs. We are now concerned with establishing when such a system is in normal form. We prove the following rather simple fact to stress that $\mathcal{S}^n_L$ is a section of $J^n$ if and only if its associated $n$-th order system of PDEs can be written in normal form.
\begin{prop} A characteristic system $\mathcal{S}^n_L$ amounts to an $n$-th order system of PDEs in normal form if and only if $\mathcal{S}^n_L$ is a section of $J^n$.
\end{prop}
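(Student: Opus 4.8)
The plan is to unwind the definitions on both sides and check the equivalence directly, since the statement is essentially a translation between the geometric notion of ``section of $J^n$'' and the analytic notion of ``system of PDEs in normal form''. First I would recall that an $n$-th order system of PDEs in normal form means a system in which, for each dependent variable $u^\alpha$, each of the highest-order derivatives $u^\alpha_K$ with $|K|=n$ (or more precisely the full set of derivatives up to order $n$) is expressed as a function of $x$, $u$, and the remaining lower-order jet coordinates; equivalently, the coordinates $u^\alpha_K$ with $1\leq|K|\leq n$ are all determined by the point $(x,u)\in J^0$. Geometrically this is exactly the condition that $\mathcal{S}^n_L$, viewed as a subset of $J^n$, projects bijectively onto $J^0$ via $\pi^n_0$, with the inverse being smooth — that is, $\mathcal{S}^n_L$ is the image of a section of $\pi^n_0: J^n\to J^0$ (equivalently, composing with $\pi^0_{-1}$, a section of $\pi^n_{-1}$ after fixing the base).

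For the forward direction, suppose $\mathcal{S}^n_L$ is an $n$-th order system of PDEs in normal form. Then by the coordinate description given after Definition \ref{CharSys}, $\mathcal{S}^n_L$ is cut out by equations expressing every $u^\alpha_K$, $1\leq|K|\leq n-1$ (and by the structure of the total-derivative equations, also $|K|=n$), as explicit functions of $(x,u)$; hence the map $J^0\to J^n$ sending $(x,u)$ to the point with those prescribed jet coordinates is a smooth section of $\pi^n_0$ whose image is precisely $\mathcal{S}^n_L$. Here I would use Lemma \ref{CharacLin} to justify that the defining equations coming from an arbitrary basis of $L$ are consistent, i.e. that the normal-form presentation does not depend on the choice of basis. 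Conversely, if $\mathcal{S}^n_L$ is (the image of) a section $s\in\Gamma(\pi^n_0)$, then every jet coordinate $u^\alpha_K$ with $1\leq|K|\leq n$ is, along $\mathcal{S}^n_L$, a prescribed smooth function of $(x,u)$, which is exactly what it means for the associated system of PDEs to be in normal form; restricting $|K|=n$ recovers the normal form in the usual sense $\partial^n u^\alpha/\partial x^K=\phi^\alpha_K(x,u,\ldots)$.

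The main subtlety, and the step I would be most careful about, is the logical equivalence between ``$\mathcal{S}^n_L$ is a section'' and ``the defining equations can be solved for all the jet coordinates as functions of $(x,u)$'' — one must check that a subset of $J^n$ which happens to be the graph of a map $J^0\to J^n$ over the identity on $J^0$ is automatically smooth enough to count as a section, which follows because the basic contact forms $\theta^\alpha_K$ used in Definition \ref{CharSys} depend smoothly (indeed polynomially) on the $u^\alpha_{K,i}$, so solving $\iota_{{\bf j}^nY_j}\theta^\alpha_K=0$ for the top variables in terms of the lower ones is a smooth operation wherever it is single-valued. I should also note that the statement is about an equivalence of the characteristic system with a system \emph{in normal form}, not about $\mathcal{S}^n_L$ being a submanifold in general (the paper has just given the counterexample $L=\langle x_1(u_1^2-u_1)\partial_{u^1}\rangle$), so the proof does not need genericity hypotheses — it is a tautological reformulation once the two notions are spelled out in coordinates, and the ``proof'' is really just making that spelling-out precise.
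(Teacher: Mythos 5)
Your proposal is correct and follows essentially the same route as the paper: both directions are a direct definitional unwinding, identifying ``$\mathcal{S}^n_L$ is a section of $\pi^n_0$'' with ``every jet coordinate $u^\alpha_K$ along $\mathcal{S}^n_L$ is a prescribed smooth function of $(x,u)$,'' which is precisely the normal-form condition. Your additional remarks on smoothness and basis-independence are harmless elaborations of what the paper dispatches as immediate.
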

\begin{proof} If $\mathcal{S}^n_L$ is a section of $J^n$, then every coordinate $u^\alpha_K$ of points in $\mathcal{S}_L^n$ can be written as a function $u^\alpha_K=\phi^\alpha_K(x,u)$. Therefore, $\mathcal{S}_L^n$ can be written as the subset of $J^n$ where the series of conditions $u^\alpha_K-\phi^\alpha_K(x,u)=0$, with $\alpha=1,\ldots,q$ and $|K|\leq n-1$ are obeyed. Such conditions amount to an $n$-th order system of PDEs in normal form. The converse statement is immediate.
\end{proof}

Let us characterise, via the space $L$, when $\mathcal{S}_L^n$ amounts to an $n$-th order system of PDEs in normal form.

\begin{thm}\label{Th:SecSL1} Let $L$ be a linear space of vector fields on $X\times U$. Then, ${\mathcal{S}^1_L}$ is a section of the bundle $\pi^1_{0}:J^1\rightarrow J^0$ if and only if the vector fields of $L$ span a regular distribution $\mathcal{D}^L$ of rank $p$ and $\mathcal{D}^L$ projects onto $TX$ under $\pi^0_{-1}$.
\end{thm}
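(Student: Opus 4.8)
The plan is to recast the condition ``$\mathcal{S}^1_L$ is a section of $\pi^1_0$'' as a pointwise statement about the subspaces $\mathcal{D}^L_{(x,u)}\subset T_{(x,u)}(X\times U)$ relative to the vertical subbundle $V=\ker d\pi^0_{-1}$. On $J^1$ the only basic contact forms $(\ref{def:BasCon})$ are $\theta^\alpha=du^\alpha-\sum_{i=1}^pu^\alpha_i\,dx^i$ (since $|K|\le n-1=0$) and $\iota_{{\bf j}^1Y}\theta^\alpha=Q^\alpha_Y$ by $(\ref{Prol})$; hence, fixing a vector-space basis $Y_1,\ldots,Y_l$ of $L$ in the form $(\ref{basisL})$, Definition \ref{CharSys} identifies $\mathcal{S}^1_L$ with the set of points of $J^1$ where $\varphi^\alpha_j-\sum_{i=1}^p\xi^i_ju^\alpha_i=0$ for all $\alpha=1,\ldots,q$ and $j=1,\ldots,l$. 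The geometric key is that a point ${\bf j}^1_x\sigma=(x,u,u^\alpha_i)$ of $J^1$ over $(x,u)$ is the same datum as a complement $H_{{\bf j}^1_x\sigma}=\langle\partial_{x^i}+\sum_\alpha u^\alpha_i\partial_{u^\alpha}\rangle_{i=1,\ldots,p}$ of $V_{(x,u)}$ in $T_{(x,u)}(X\times U)$, and that by $(\ref{Prol})$ one has $Y_j(x,u)-\sum_{i=1}^p\xi^i_j(\partial_{x^i}+\sum_\alpha u^\alpha_i\partial_{u^\alpha})=\sum_\alpha Q^\alpha_{Y_j}({\bf j}^1_x\sigma)\,\partial_{u^\alpha}\in V_{(x,u)}$; hence all $Q^\alpha_{Y_j}$ vanish at ${\bf j}^1_x\sigma$ exactly when $Y_j(x,u)\in H_{{\bf j}^1_x\sigma}$. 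Consequently
$$
(\pi^1_0)^{-1}(x,u)\cap\mathcal{S}^1_L=\{H:\ H\oplus V_{(x,u)}=T_{(x,u)}(X\times U),\ \mathcal{D}^L_{(x,u)}\subseteq H\},
$$
so $\mathcal{S}^1_L$ is a section of $\pi^1_0$ if and only if, for each $(x,u)$, there is a unique complement of $V_{(x,u)}$ containing $\mathcal{D}^L_{(x,u)}$ and the resulting map $J^0\to J^1$ is smooth.

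For the ``if'' direction I would argue as follows: if $\mathcal{D}^L$ is regular of rank $p$ and $d\pi^0_{-1}(\mathcal{D}^L_{(x,u)})=T_xX$, then $d\pi^0_{-1}:\mathcal{D}^L_{(x,u)}\to T_xX$ is a surjection between $p$-dimensional spaces, hence an isomorphism; therefore $\mathcal{D}^L_{(x,u)}\cap V_{(x,u)}=0$ and $\mathcal{D}^L_{(x,u)}$ is itself a complement of $V_{(x,u)}$, so by a dimension count it is the only complement containing it. Thus $\mathcal{S}^1_L$ meets each fibre of $\pi^1_0$ in the single point corresponding to $\mathcal{D}^L_{(x,u)}$. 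Smoothness of the section follows locally: since $\mathrm{rank}(\xi^i_j)=p$ everywhere, near any point some $p\times p$ minor of $(\xi^i_j)$ is invertible, and Cramer's rule applied to the associated square subsystem of $\varphi^\alpha_j-\sum_i\xi^i_ju^\alpha_i=0$ yields the (unique, hence correct) solution $u^\alpha_i=\phi^\alpha_i(x,u)$ as smooth functions.

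For the ``only if'' direction, suppose $\mathcal{S}^1_L$ is a section and fix $(x,u)$; let $H$ be the unique complement of $V_{(x,u)}$ containing $\mathcal{D}^L_{(x,u)}$. From $\mathcal{D}^L_{(x,u)}\subseteq H$ and $H\cap V_{(x,u)}=0$ we get $\mathcal{D}^L_{(x,u)}\cap V_{(x,u)}=0$, so $k:=\dim\mathcal{D}^L_{(x,u)}\le p$. If $k<p$, I would reach a contradiction by producing a second such complement: choose a basis $e_1,\ldots,e_k$ of $\mathcal{D}^L_{(x,u)}$, extend it to a basis $e_1,\ldots,e_k,f_{k+1},\ldots,f_p$ of $H$, pick $0\neq v\in V_{(x,u)}$ (possible since $q\ge1$), and set $H'=\langle e_1,\ldots,e_k,f_{k+1}+v,f_{k+2},\ldots,f_p\rangle$; using $H\cap V_{(x,u)}=0$ one checks that $H'$ is again $p$-dimensional and transverse to $V_{(x,u)}$, hence a complement of $V_{(x,u)}$ containing $\mathcal{D}^L_{(x,u)}$, while $H'\neq H$ because $f_{k+1}+v\notin H$. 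Therefore $k=p$, so $\mathcal{D}^L_{(x,u)}$ is a complement of $V_{(x,u)}$; in particular $d\pi^0_{-1}(\mathcal{D}^L_{(x,u)})=T_xX$. Since $(x,u)$ was arbitrary, $\mathcal{D}^L$ is a regular distribution of rank $p$ projecting onto $TX$ under $\pi^0_{-1}$.

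I expect the main obstacle to be precisely the subcase $\dim\mathcal{D}^L_{(x,u)}<p$ of the ``only if'' direction, where one has to genuinely exhibit the extra complement $H'$ and verify its transversality to $V_{(x,u)}$ --- which is where the standing assumption $\dim U=q\ge1$ enters; a minor additional point is that ``section'' carries a smoothness requirement, met above by the local nonvanishing-minor argument. In coordinates the whole equivalence says that $\mathcal{S}^1_L$ is the graph of a map $J^0\to J^1$ iff the Clairin matrix $\xi=(\xi^i_j)$ has constant rank $p$ and the linear system defining the $u^\alpha_i$ is consistent, in agreement with the remark in the Introduction.
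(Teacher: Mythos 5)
Your proof is correct, but it is packaged differently from the paper's. The paper works in coordinates: it writes $\mathcal{S}^1_L$ as the zero set of the characteristics $\varphi^\alpha_j-\sum_i\xi^i_ju^\alpha_i$, uses Lemma \ref{CharacLin} to reduce to $p$ generators of $\mathcal{D}^L$, and settles both implications by the rank criterion for the linear system in the unknowns $u^\alpha_i$ (coefficient matrix $\xi$ and extended matrix both of rank $p$), obtaining along the way the explicit formula (\ref{Sys1O}) for the section, which is then reused in Lemma \ref{Lem:LZ} and Theorem \ref{Th:SecSL}. You instead exploit the affine structure of $\pi^1_0$, identifying a point of $J^1$ over $(x,u)$ with a horizontal complement $H$ of the vertical space $V_{(x,u)}=\ker d\pi^0_{-1}$ and the fibre of $\mathcal{S}^1_L$ with the set of such complements containing $\mathcal{D}^L_{(x,u)}$; the ``if'' part becomes the observation that a rank-$p$ distribution projecting onto $T_xX$ is itself the unique such complement, and the ``only if'' part is proved by explicitly constructing a second complement (perturbing a basis vector by a nonzero vertical vector) whenever $\dim\mathcal{D}^L_{(x,u)}<p$. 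Your route is coordinate-free, avoids any appeal to Lemma \ref{CharacLin} or to a choice of $p$ distinguished basis elements, and makes fully explicit the uniqueness-forces-rank-$p$ step that the paper dispatches tersely via the extended-matrix rank condition; what it gives up is the closed-form expression (\ref{Sys1O}) for the section, which you recover only at the end through the invertible-minor/Cramer argument (itself needed, as you correctly note, to certify smoothness), and which the paper's later results lean on. Both arguments rest on the same underlying linear algebra, so the proofs are equivalent in content.
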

\begin{proof} 
 The linear  space $L$ admits a basis $Y_1,\ldots,Y_l$ of the form (\ref{basisL}).
 
 Let us prove first the converse part of our theorem. If $\mathcal{D}^L$ is regular of rank $p$ and its projection onto $X$  is $TX$ (relative to $\pi^0_{-1}$), then $Y_1,\ldots,Y_l$ span $\mathcal{D}^L_{(x,u)}\subset T(X\times U)$ and
        $$
        Y^H_j(x,u)=\sum_{i=1}^p\xi_j^i(x,u)\frac{\partial}{\partial x^i},\qquad j=1,\ldots,l=\dim L,
        $$
        span $T_xX$ for every $u\in U$. Consequently, we can assume without loss of generality that the first vector fields  $Y_1,\ldots, Y_p$ from $Y_1\ldots,Y_l$ are such that $Y_1(x,u),\ldots, Y_p(x,u)$ are linearly independent and span $\mathcal{D}^L$ at a point $(x,u)\in X\times U$. If $\xi$ is the $p\times p$  matrix with coefficients $\xi_j^i$ of $Y_1,\ldots Y_p$, then $\det \xi\neq 0$ on an open subset of $(x,u)$ and $\xi$ has an inverse $\xi^{-1}$. The space ${\mathcal{S}^1_L}$ is given by the zeroes of the functions
        $\iota_{{\bf j}^1\,Y_j}\theta^\alpha=\varphi_j^\alpha(x,u)-\sum_{i=1}^{p}\xi_j^i(x,u) u^\alpha_i$, with $j=1,\ldots,l$. Since $Y_1,\ldots,Y_p$ generate the distribution $\mathcal{D}^L$, in view of Lemma \ref{CharacLin}, the $\mathcal{S}_L^1$ is locally given by the zeroes of the functions $\iota_{{\bf j}^1\,Y_j}\theta^\alpha=\varphi_j^\alpha(x,u)-\sum_{i=1}^{p}\xi_j^i(x,u) u^\alpha_i$, with $j=1,\ldots,p$.
This shows that
        \begin{equation}\label{Sys1O}
        u^\alpha_k=\sum_{i=1}^p(\xi^{-1})^i_k(x,u)\varphi^\alpha_i(x,u),\qquad k=1,\ldots,p,\quad \alpha=1,\ldots,q.
        \end{equation}
        Thus, the value of $u^\alpha_k$ is determined for every $(x,u)$ univocally in terms of the $\varphi^\alpha_j$ and $\xi$. Hence, ${\mathcal{S}^1_L}$ is locally a section of $\pi^1_0$. The smoothness of the vector fields $Y_1,\ldots,Y_p$ ensures that ${\mathcal{S}^1_L}$ is locally a smooth section of $\pi^1_0$. As the same procedure can be applied to any point $(x,u)$, one easily finds that $\mathcal{S}^1_L$ is a global section of $\pi^1_0$.
        
        Let us prove the direct part of this theorem. If ${\mathcal{S}^1_L}$ is a smooth section of $\pi^1_0$, then the points $(x^i,u^\alpha,u^\alpha_i)$ of ${\mathcal{S}^1_L}$ are such that for every $x,u$, the possible values of $u^\alpha_k$ are univocally defined by the conditions
        $$
        \varphi_j^\alpha(x,u)-\sum_{i=1}^{p}\xi_j^i(x,u) u^\alpha_i=0,\qquad j=1,\ldots,l. 
        $$
        The values of $u^\alpha_i$ for every $x,u$ are univocally determined if and only if the extended and the standard matrix of coefficients of the previous system have rank $p$.  The matrix of coefficients has rank $p$ if and only if the projections of the vector fields of $L$ onto $X$ span a distribution of rank $p$, namely $TX$. As the extended matrix also has  rank $p$,  the vector fields $Y_1,\ldots, Y_l$ also span a regular distribution of rank $p$ on $J^0$.
\end{proof}
\begin{ex} Theorem \ref{Th:SecSL1} shows that $\mathcal{S}^1_L$ may be a section of $\pi_0^1$ even if $L$ is not a Lie algebra and/or its elements do not span an integrable distribution, these conditions being the standard used in other works \cite{GMR97,GR95}. For instance, consider  $U=\mathbb{R}$, $X=\mathbb{R}^2$ and $L=\langle \partial/\partial x+\partial/\partial u,\partial/\partial t+x\partial/\partial u\rangle$, which is not a Lie algebra. Then $\mathcal{S}_L^1$ is given by the zeroes of the conditions $u_x-1=0, u_t-x=0$, which amount to  a section on $\pi^1_0$.
\end{ex}

\begin{ex} Theorem \ref{Th:SecSL1} states that $\mathcal{S}^1_L$ may be a section of $\pi^1_0$ when $L$ is a Lie algebra of dimension different from $\dim X$. This case does not appear in the Clairin approach, where $\dim L=\dim X$, (see \cite{Cl03,GMR97,GR95}). To illustrate such a new possibility, consider the case where $U=\mathbb{R}$ and $X=\mathbb{R}^2$. The Lie algebra $L=\langle \partial/\partial x+\partial/\partial u,\partial/\partial t+\partial/\partial u,x(\partial/\partial t+\partial/\partial u)\rangle$ is a three-dimensional Lie algebra giving rise to a section on $\pi_0^1$ given by the equations $u_x-1=0,u_t-1=0,xu_t-x=0$. 
	\end{ex}

\begin{ex}
Let us provide an example of when a Lie algebra $L$ on $X\times U$ does not give rise to a section of $\pi^1_{-1}$. Consider $U=\mathbb{R}$, $X=\mathbb{R}$, and the Lie algebra $L=\langle x\partial/\partial x+\partial/\partial u\rangle$. The Lie algebra $L$ gives rise to a regular distribution of rank one, but the distribution does not project onto $T\mathbb{R}$ under $\pi^0_{-1}$ (it has a problem at points of $U\times X$ with $x=0$). Then, Theorem \ref{Th:SecSL1} ensures that $\mathcal{S}_L$ is not a section of $\pi^0_{-1}$. In fact, the characteristic system of $L$ is given by  $xu_x-1=0$ and the value of $u_x$ at $x=0$ is not determined. Thus, $\mathcal{S}_L$ is not a section of $\pi^1_{-1}$.
\end{ex}

 Let us now extend Proposition \ref{Th:SecSL1} to $J^n$. This will lead to the requirement that the distribution spanned by the elements of $L$ be involutive in cases relevant to us, namely where we are studying systems of PDEs and therefore $\dim X>1$. Before proving this result, we need the following lemma.
 
\begin{lem}\label{Lem:LZ} If $L$ gives rise to a section $\mathcal{S}^1_L$ of $\pi^1_0$, then the first-order system of PDEs (\ref{NormalSys}) associated with $\mathcal{S}^1_L$ is such that the vector fields	(\ref{B})
 span the same distribution as $L$.
\end{lem}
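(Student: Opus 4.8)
The plan is to prove that the distribution spanned by the vector fields $Z_1,\dots,Z_p$ of~(\ref{B}) coincides with $\mathcal{D}^L$ by first establishing the pointwise inclusion $\mathcal{D}^L\subseteq\langle Z_1,\dots,Z_p\rangle$ and then closing the gap with a rank count.

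I would begin by recalling the structure supplied by Theorem~\ref{Th:SecSL1}: since $\mathcal{S}^1_L$ is a section of $\pi^1_0$, the space $L$ spans a regular distribution $\mathcal{D}^L$ of rank $p$ that projects onto $TX$ under $\pi^0_{-1}$. Writing this section as $\sigma_\phi:(x,u)\mapsto(x^i,u^\alpha,\phi^\alpha_i(x,u))$, the first-order system in normal form associated with $\mathcal{S}^1_L$ is $\partial u^\alpha/\partial x^i=\phi^\alpha_i(x,u)$, so the vector fields of~(\ref{B}) are $Z_i=\partial/\partial x^i+\sum_{\alpha=1}^q\phi^\alpha_i\,\partial/\partial u^\alpha$, with $i=1,\dots,p$. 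These are pointwise linearly independent, since their $X$-components are $\partial/\partial x^1,\dots,\partial/\partial x^p$; hence $\langle Z_1,\dots,Z_p\rangle$ is a regular distribution of rank exactly $p$.

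The core of the argument is the identity $Y=\sum_{i=1}^p\xi^iZ_i$ for every $Y\in L$, where $\xi^1,\dots,\xi^p$ are the $X$-components of $Y$ in the notation of~(\ref{Y}). To obtain it, note that by Definition~\ref{CharSys} with $n=1$ --- so that $\mathcal{CF}(J^1)$ is generated by the basic contact forms $\theta^\alpha$ of~(\ref{def:BasCon}) and $\iota_{{\bf j}^1\,Y}\theta^\alpha=Q^\alpha_Y$ --- the set $\mathcal{S}^1_L$ is precisely the common zero locus in $J^1$ of the characteristics $Q^\alpha_Y=\varphi^\alpha-\sum_{i=1}^p\xi^iu^\alpha_i$, taken over all $Y\in L$. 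Since $\mathcal{S}^1_L$ equals the image of $\sigma_\phi$, the point $(x,u,\phi(x,u))$ lies in $\mathcal{S}^1_L$ for every $(x,u)\in J^0$, and substituting $u^\alpha_i=\phi^\alpha_i(x,u)$ into the characteristics yields the identity $\varphi^\alpha(x,u)=\sum_{i=1}^p\xi^i(x,u)\,\phi^\alpha_i(x,u)$ on all of $J^0$, for each $\alpha=1,\dots,q$. Consequently $Y-\sum_{i=1}^p\xi^iZ_i=\sum_{\alpha=1}^q\bigl(\varphi^\alpha-\sum_{i=1}^p\xi^i\phi^\alpha_i\bigr)\partial/\partial u^\alpha=0$, which is the desired identity. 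It shows that every element of $L$ lies in the $C^\infty(J^0)$-module generated by $Z_1,\dots,Z_p$, hence $\mathcal{D}^L_{(x,u)}\subseteq\langle Z_1,\dots,Z_p\rangle_{(x,u)}$ at each point; comparing ranks, both being $p$, the two distributions coincide.

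The only point that needs a little care is the passage from "$\mathcal{S}^1_L$ is a section of $\pi^1_0$" to the identity $\varphi^\alpha=\sum_i\xi^i\phi^\alpha_i$ holding identically on $J^0$, and not merely on some proper subset: this is exactly where one uses that a section is surjective onto $J^0$, together with the fact that $\mathcal{S}^1_L$ is cut out by the vanishing of the characteristics of $L$. Everything else is routine; in particular the explicit expression~(\ref{Sys1O}) of $\phi^\alpha_i$ through the inverse matrix $\xi^{-1}$ is not needed.
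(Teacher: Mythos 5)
Your proof is correct, and it reaches the conclusion by a slightly different decomposition than the paper. The paper's own argument selects $p$ basis elements $Y_1,\ldots,Y_p$ of $L$ whose $X$-components form the invertible matrix $\xi$, and then uses the explicit formula (\ref{Sys1O}) from the proof of Theorem~\ref{Th:SecSL1} to exhibit each $Z_j$ of (\ref{B}) as the combination $Z_j=\sum_{k=1}^p(\xi^{-1})^k_jY_k$, so that span equality follows from the invertibility of $\xi$. You go in the opposite direction: evaluating the characteristics $Q^\alpha_Y=\varphi^\alpha-\sum_i\xi^iu^\alpha_i$ of an \emph{arbitrary} $Y\in L$ at the points $(x,u,\phi(x,u))$ of the section gives the identity $\varphi^\alpha=\sum_i\xi^i\phi^\alpha_i$ on $J^0$, hence $Y=\sum_i\xi^iZ_i$, which yields the inclusion $\mathcal{D}^L\subseteq\langle Z_1,\ldots,Z_p\rangle$; equality then follows from the rank-$p$ statement of Theorem~\ref{Th:SecSL1} because both distributions are pointwise $p$-dimensional. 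Your route avoids choosing a sub-basis and never needs $\xi^{-1}$, and it treats all elements of $L$ uniformly; the paper's route is equally short and has the side benefit of producing the explicit invertible change of generators $Z_j=\sum_k(\xi^{-1})^k_jY_k$, which is precisely the form that gets reused later (e.g.\ in (\ref{renor}) in Lemma~\ref{Renormalization} and in the proof of Theorem~\ref{Th:SecSL}). Both arguments rest on the same two inputs, the characterisation of $\mathcal{S}^1_L$ as the common zero locus of the characteristics and the rank/projection properties guaranteed by Theorem~\ref{Th:SecSL1}, and your observation that the surjectivity of the section onto $J^0$ is what upgrades the vanishing of the characteristics to an identity on all of $J^0$ is exactly the right point to flag.
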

\begin{proof} Let $Y_1,\ldots,Y_l$ be a basis of $L$ given by (\ref{basisL}). In view of Theorem \ref{Th:SecSL1}, the distribution generated by the elements of $L$ has rank $p$ and there exist $p$ vector fields among $Y_1,\ldots,Y_l$, let us say without loss of generality that these are $Y_1,\ldots,Y_p$, whose coefficients allow us to determine a first-order system of PDEs given by (\ref{Sys1O}). Then, the vector fields 
	$$
	\frac{\partial}{\partial x^j}+\sum_{\alpha=1}^q\sum_{i=1}^p(\xi^{-1})^i_j(x,u)\varphi^\alpha_i\frac{\partial}{\partial u^\alpha}=\sum_{k=1}^p(\xi^{-1})_j^kY_k,\qquad j=1,\ldots,p
	$$
	span the same distribution as the vector fields of $L$.
\end{proof}

\begin{thm}\label{Th:SecSL} Let $L$ be a linear space of vector fields on $X\times U$. Then, ${\mathcal{S}^n_L}$, with $n>1$, is a smooth section of the bundle $\pi^n_{0}:J^n\rightarrow J^0$ if and only if the vector fields of $L$ span an involutive distribution $\mathcal{D}^L$ of rank $p$ projecting onto $TX$ under $\pi^0_{-1}$.
\end{thm}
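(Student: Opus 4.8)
The plan is to prove the two implications separately — as in the proof of Theorem~\ref{Th:SecSL1}, starting with the converse part — and in each case to reduce everything to the first-order system in normal form attached to $L$ by Theorem~\ref{Th:SecSL1} and Lemma~\ref{Lem:LZ}.

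\emph{Converse part.} Suppose $\mathcal{D}^L$ is an involutive distribution of rank $p$ projecting onto $TX$. By Theorem~\ref{Th:SecSL1}, $\mathcal{S}^1_L$ is a section of $\pi^1_0$, and by Lemma~\ref{Lem:LZ} the associated normal system $u^\alpha_i=\phi^\alpha_i(x,u)$ has the property that the vector fields $Z_j=\partial/\partial x^j+\sum_\alpha\phi^\alpha_j\,\partial/\partial u^\alpha$ of (\ref{B}) span $\mathcal{D}^L$. Since $[Z_i,Z_j]$ has no $\partial/\partial x$-component, it belongs to $\mathcal{D}^L$ only if it is zero; hence $\mathcal{D}^L$ is involutive if and only if $[Z_i,Z_j]=0$ for all $i,j$, which by Proposition~\ref{ZCC-LS} means the normal system is integrable, hence locally solvable. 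By Lemma~\ref{CharacLin} one has $\mathcal{S}^n_L=\mathcal{S}^n_{\langle Z_1,\ldots,Z_p\rangle}$, so $\mathcal{S}^n_L$ is the zero set of the functions $D_KQ^\alpha_{Z_j}=D_K\phi^\alpha_j-u^\alpha_{K,j}$ for $1\le|K|\le n-1$. Along a prolongation of a solution of the normal system the total derivatives become ordinary partial derivatives and each $Q^\alpha_{Z_j}$ vanishes identically, so every $n$-jet of a solution lies in $\mathcal{S}^n_L$; conversely, an induction on $|K|$ using $u^\alpha_{K,j}=D_K\phi^\alpha_j$ (whose right-hand side involves only coordinates of order $\le|K|$) shows that a point of $\mathcal{S}^n_L$ lying over $(x_0,u_0)$ has all of its coordinates determined and so equals the $n$-jet at $x_0$ of the unique solution through $(x_0,u_0)$. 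Thus $\mathcal{S}^n_L$ is precisely the set of $n$-jets of solutions of the normal system, and local existence and smooth dependence on initial data show it is a smooth section of $\pi^n_0$.

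\emph{Direct part.} Suppose $\mathcal{S}^n_L$ is a smooth section of $\pi^n_0$ with $n>1$. The subtle point is that one cannot immediately invoke Theorem~\ref{Th:SecSL1} to conclude that $\mathcal{D}^L$ is regular of rank $p$ projecting onto $TX$, since a priori $\pi^n_1(\mathcal{S}^n_L)$ need not exhaust $\mathcal{S}^1_L$; so I argue directly on the fibres of $\pi^n_0$. If at some $(x_0,u_0)$ the projection of $\mathcal{D}^L$ were a proper subspace of $T_{x_0}X$, pick the coordinate $x^p$ so that $\xi^p_j(x_0,u_0)=0$ for every $j$; then the fibre coordinate $u^\alpha_{(0,\ldots,0,n)}$ occurs in the defining functions of $\mathcal{S}^n_L$ only through terms $-\xi^p_j\,u^\alpha_{(0,\ldots,0,n)}$ inside the $D_KQ^\alpha_{Y_j}$ with $|K|=n-1$, whose coefficients all vanish at $(x_0,u_0)$; hence that coordinate is unconstrained on the fibre of $\mathcal{S}^n_L$ over $(x_0,u_0)$, which is therefore empty or positive-dimensional, contradicting that $\mathcal{S}^n_L$ is a section. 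Likewise, if $\mathcal{D}^L$ had rank larger than $p$ at some $(x_0,u_0)$, the projection $\mathcal{D}^L_{(x_0,u_0)}\to T_{x_0}X$ would have nonzero kernel, producing $Y\in L$ with all $\xi^i_Y(x_0,u_0)=0$ and $\varphi^{\alpha_0}_Y(x_0,u_0)\neq0$ for some $\alpha_0$; then $Q^{\alpha_0}_Y$ is a nonzero constant on that fibre, while $\mathcal{S}^n_L\subseteq\{Q^{\alpha_0}_Y=0\}$, which is impossible. Hence $\mathcal{D}^L$ is regular of rank $p$ and projects onto $TX$.

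With this established, Theorem~\ref{Th:SecSL1} and Lemma~\ref{Lem:LZ} again provide the normal system $u^\alpha_i=\phi^\alpha_i(x,u)$ with $\mathcal{D}^L=\langle Z_1,\ldots,Z_p\rangle$ and, via Lemma~\ref{CharacLin}, $\mathcal{S}^n_L=\mathcal{S}^n_{\langle Z_1,\ldots,Z_p\rangle}$. Because $n>1$, both $D_iQ^\alpha_{Z_j}=0$ and $D_jQ^\alpha_{Z_i}=0$ hold on $\mathcal{S}^n_L$; as $D_iQ^\alpha_{Z_j}=D_i\phi^\alpha_j-u^\alpha_{K,j}$ with $|K|=1$, $k_i=1$, and on $\mathcal{S}^n_L$ one has $D_i\phi^\alpha_j=Z_i(\phi^\alpha_j)$ (there $u^\beta_k=\phi^\beta_k$), the equality of the single second-order coordinate $u^\alpha_{K,j}$ forces $Z_i(\phi^\alpha_j)=Z_j(\phi^\alpha_i)$; since $\pi^n_0$ maps $\mathcal{S}^n_L$ onto $J^0$ this holds everywhere, i.e.\ $[Z_i,Z_j]=0$, i.e.\ by Proposition~\ref{ZCC-LS} the normal system is integrable and $\mathcal{D}^L$ is involutive. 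Combining the two parts yields the equivalence. I expect the main obstacle to be this direct part: the coordinate analysis of the top-order fibre of $\mathcal{S}^n_L$ cannot be circumvented, because projecting a section of $\pi^n_0$ down to $J^1$ does not recover all of $\mathcal{S}^1_L$, and it is exactly the rank and involutivity of $\mathcal{D}^L$ that prevent unconstrained or over-determined coordinates from arising in that fibre.
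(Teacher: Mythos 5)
Your proof is correct, and its overall architecture coincides with the paper's: in the converse part you reduce, via Theorem~\ref{Th:SecSL1}, Lemma~\ref{Lem:LZ} and Lemma~\ref{CharacLin}, to the normal system $u^\alpha_i=\phi^\alpha_i(x,u)$, obtain $[Z_i,Z_j]=0$ and hence local solvability (Proposition~\ref{ZCC-LS}, cf.\ Lemma~\ref{Lem:InvAbe}), and identify $\mathcal{S}^n_L$ with the set of $n$-jets of solutions; the paper's induction on the order, with compatibility of the linear system for the $u^\alpha_{R,i}$ checked by pulling it back along prolongations of solutions, is the same mechanism as your existence-plus-triangular-uniqueness step. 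The genuine difference is in the direct part. The paper simply asserts that if $\mathcal{S}^n_L$ is a section of $\pi^n_0$ then $\mathcal{S}^1_L$ is a section of $\pi^1_0$ and then invokes Theorem~\ref{Th:SecSL1}; as you observe, only $\pi^n_1(\mathcal{S}^n_L)\subset\mathcal{S}^1_L$ is automatic, so the uniqueness of the first-order fibre requires an argument, and your fibre analysis supplies it: the order-$n$ coordinates enter the defining functions of $\mathcal{S}^n_L$ only linearly, through $-\xi^i_j\,u^\alpha_{K,i}$ with $|K|=n-1$, so if the projections of the elements of $L$ failed to span $T_{x_0}X$ the pure top-order coordinate in the missing direction would be unconstrained over $(x_0,u_0)$, while a vertical vector in $\mathcal{D}^L_{(x_0,u_0)}$ would make a characteristic a nonzero constant on that fibre; either way the fibre cannot be a single point. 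This buys a complete justification of a step the paper glosses over; once rank $p$ and surjectivity onto $TX$ are in place, your extraction of $[Z_i,Z_j]=0$ from the uniqueness of the symmetric second-order coordinates $u^\alpha_{ij}$ (which is where $n>1$ is used) is exactly the paper's closing argument.
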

\begin{proof}
	
	Let us prove the converse part of the theorem by induction relative to $n$. Assume that $Y_1,\ldots,Y_l$ given by (\ref{basisL}) is a basis of the linear space $L$.	
	In view of Theorem \ref{Th:SecSL1} and the considered assumptions,  $\mathcal{S}^{1}_L$ is a section of $\pi^1_0$ and then  $u^\alpha_j=\phi^\alpha_j(x,u)$
	for certain functions $\phi^\alpha_j$ on $\mathcal{S}^1_L$, with $\alpha=1,\ldots,q$ and $j=1,\ldots,p$. 
	Let us prove that if the $u^{\alpha}_R$, where $R$ is any multi-index with $|R|\leq h\leq n-2$ for a natural number $h$, can be written as functions depending on $x,u$ only on the subset $\mathcal{S}^n_L$, then the $u^\alpha_{K}$ for $|K|=h+1$ can also. Recall that one
	can guarantee on $\mathcal{S}^n_L$
	that
	\begin{equation}\label{eqa}
	0=\iota_{{\bf j}^nY_j}\theta^\alpha_R=D_R\left(\varphi^\alpha-\sum_{i=1}^p\xi_j^iu_{i}^\alpha\right),\quad \alpha=1,\ldots,q,\,\, j=1,\ldots,l,\,\, |R|\leq h.
	\end{equation}
	By our induction hypothesis, the coordinates $u_R^\alpha$ of points of $\mathcal{S}^n_L$  for $|R|\leq h$
can be written as functions of $x,u$ only, namely $u_R^\alpha=u_R^\alpha(x,u)$. Hence, rewriting the right-hand side of (\ref{eqa}), we obtain that
	\begin{equation}\label{eee}
	0=-\sum_{i=1}^pu^\alpha_{R,i}\xi^i_j(x,u)+F^\alpha_{Rj}(x,u),
	\end{equation}
	for certain functions $F^\alpha_{Rj}(x,u)$ with $\alpha=1,\ldots,q$, $j=1,\ldots,l$, and $|R|\leq h$ that gather all the terms of $D_R(\varphi^\alpha-\sum_{i=1}^p\xi_j^iu_i^\alpha)$ with derivatives of the $u^\alpha$ up to order $h$.
	Since $\mathcal{S}_L^1$ is a section of $\pi^1_0$, Theorem \ref{Th:SecSL1} ensures that there exist $p$ elements of the basis of $L$, which are assumed without loss of generality to be the first $p$ ones, such that their coordinate functions $\xi^k_j$, for $i,j=1,\ldots,p$, are the entries of an invertible matrix $\xi$. Then, the expressions (\ref{eee})
	show that the coordinates $u^\alpha_{R,k}$ of the points of $\mathcal{S}_L^n$ are determined by a system of algebraic equations depending only on $x,u$. Hence, if the system (\ref{eee}) admits a solution, then it is unique. 
	
	Let us prove that the system (\ref{eee}) is compatible. If $\dim X=1$, then this is obvious as the expressions (\ref{eee}) determine each derivative of $u^\alpha$ in terms of the unique independent variable uniquely. If $\dim X>1$, then $u^\alpha_{i,j}=u^\alpha_{j,i}$ for $i\neq j$ but the expressions for $u^\alpha_{i,j}$ and $u^\alpha_{j,i}$ obtained from  (\ref{eee}) may give different values at a point $(x,u)$. To prove that system (\ref{eee}) is compatible, recall that  the elements of $L$ span an involutive distribution. 
	Hence, Lemma \ref{Lem:LZ} ensures that the vector fields $Z_j$ span an involutive distribution and the associated first-order system of PDEs in normal form induced by $L$, let us say (\ref{NormalSys}),
	admits a local solution $u(x)$ for every initial condition $(x_0,u_0)$. On the sections ${\bf j}^n\sigma$, for $\sigma(x)=(x,u(x))$, the characteristics of all $Y_1,\ldots,Y_{l}$, and their total derivatives vanish identically. Hence, the pull-back of expressions (\ref{eee}) relative to ${\bf j}^n\sigma$ are zero and the system (\ref{eee}) is compatible.  By the induction hypothesis, $\mathcal{S}^n_L$ is a section relative to $\pi^n_0$.
	
		Let us prove the  direct part of the theorem. If $\mathcal{S}^n_L$ is a section of $\pi^n_0$, then $\mathcal{S}^{1}_L$ is a section of $\pi^1_0$ and, using Theorem \ref{Th:SecSL1}, we obtain that $\mathcal{D}^L$ has rank $p$ and projects onto $TX$ under $\pi^0_{-1}$. It is left to prove that $\mathcal{D}^L$ is involutive. If $\dim X=1$, the result is immediate. If $\dim X>1$, we can use the same arguments as in the proof of Theorem \ref{Th:SecSL1} to obtain that the first-order system of PDEs associated with $\mathcal{S}^1_L$ reads (\ref{Sys1O}). In view of Lemma \ref{CharacLin}, this system is given by the zeroes of the characteristics
\begin{displaymath}\sum_{i=1}^q(\xi^{-1})^i_k\varphi^\alpha_i-u^\alpha_k=\sum_{j=1}^q(\xi^{-1})^{j}_k\iota_{{\bf j}^nY_j}\theta^\alpha=\iota_{{\bf j}^n\sum_{j=1}^q(\xi^{-1})^{j}_kY_j}\theta^\alpha=0\end{displaymath}
for $k=1,\ldots,p$ and  $\alpha=1,\ldots,q$. Since $\mathcal{S}^2_L$ is a section, one has that the equations $D_i\iota_{{\bf j}^nY_j}\theta^\alpha=0$ and $D_j\iota_{{\bf j}^nY_i}\theta^\alpha=0$, with $i\neq j$, must lead to a unique solution for $u^\alpha_{i,j}$ at every $x,u$. In terms of the Lemma \ref{CharacLin}, one has that $D_i(\sum_{j=1}^q(\xi^{-1})^j_k\varphi^\alpha_j-u^\alpha_k)=D_k(\sum_{j=1}^q(\xi^{-1})^j_i\varphi^\alpha_j-u^\alpha_i)$, which implies that the vector fields 
		\begin{equation}\label{eq:Bas}
	\frac{\partial}{\partial x^j}+\sum_{\alpha=1}^q\sum_{j=1}^q(\xi^{-1})^j_k(x,u)\varphi^\beta_j\frac{\partial}{\partial u^\alpha},\qquad j=1,\ldots,p
	\end{equation}
	commute among themselves. In view of Lemma \ref{Lem:LZ}, the vector fields of $L$ are linear combinations with functions in $C^\infty(U\times X)$ of the vector fields (\ref{eq:Bas}). Then, they span an involutive distribution.  
		
	\end{proof}

\begin{rem}\label{Uni}The proof of  Theorem \ref{Th:SecSL} shows that  equations (\ref{eee}) determine the value of each coordinate $u^\alpha_K$, with $|K|\geq 1$, of points of a section $\mathcal{S}^n_L$ of $\pi^n_0$ with $n=2,3,4,\ldots$, in terms of the $u^1_R,\ldots u^q_R$ with $|R|<|K|$. 
	Thus, two integrable sections $\mathcal{S}^n_L$ and $\mathcal{S}^n_{L'}$, with $n>1$, sharing the same projection onto $J^1$ are the same. Moreover, solutions of the system of PDEs $\mathcal{S}^n_L$, with $n>1$, must be holonomic sections. Additionally, Proposition \ref{Prop:Coi} will permit us to determine the necessary and sufficient conditions on $L$ and $L'$ to ensure that their characteristic systems are sections of $\pi^n_0$ such that $\mathcal{S}^n_L=\mathcal{S}^n_{L'}$.
\end{rem}

\section{On the geometry of characteristic systems}

It is frequently assumed in the Clairin theory of  conditional symmetries that $L$ is an Abelian Lie algebra of dimension $\dim X$ admitting a basis of a particular type \cite{Cl03,GMR97,GR95}. This ensures that $\mathcal{S}_L^n$ is a section of $\pi_0^n$ amounting to an integrable system of PDEs and the vector fields of $L$ are tangent to $\mathcal{S}_L^n$. This section provides necessary and sufficient conditions on $L$ to ensure previous results. More particularly, we will find that the conditions appearing in \cite{GMR97,GR95} can be significantly relaxed.

\begin{prop}\label{Prop:Coi} Let $L$ and $L'$ be linear spaces of vector fields on $X\times U$ whose characteristic systems in $J^n$ are sections of $\pi^n_0$. Then,  $\mathcal{S}^n_L=\mathcal{S}^n_{L'}$ if and only if the vector fields of $L$ and $L'$ span the same distribution.
	\end{prop}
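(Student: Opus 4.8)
The plan is to prove both implications by reducing the equality of characteristic systems to an equality of the rank-$p$ distributions $\mathcal{D}^L$ and $\mathcal{D}^{L'}$, and, for the converse, to an equality of the associated first-order characteristic systems. Throughout I would use that, by Theorems \ref{Th:SecSL1} and \ref{Th:SecSL}, the standing hypothesis that $\mathcal{S}^n_L$ and $\mathcal{S}^n_{L'}$ are sections of $\pi^n_0$ forces $\mathcal{D}^L$ and $\mathcal{D}^{L'}$ to be regular distributions of rank $p$ projecting onto $TX$ (and involutive when $n>1$), and hence that $\mathcal{S}^1_L$ and $\mathcal{S}^1_{L'}$ are global sections of $\pi^1_0$, described in coordinates as in (\ref{Sys1O}).

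For the implication $\mathcal{D}^L=\mathcal{D}^{L'}\Rightarrow\mathcal{S}^n_L=\mathcal{S}^n_{L'}$ it suffices, by symmetry, to show $\mathcal{S}^n_L\subseteq\mathcal{S}^n_{L'}$. I would fix ${\bf j}^n_x\sigma\in\mathcal{S}^n_L$ with base point $(x_0,u_0)$ and an arbitrary $Y'\in L'$; since the basic contact forms $\theta^\alpha_K$ with $0\leq|K|\leq n-1$ locally span $\mathcal{CF}(J^n)$ over functions on $J^n$, it is enough to check $[\iota_{{\bf j}^nY'}\theta^\alpha_K]({\bf j}^n_x\sigma)=0$ for all $\alpha$ and all such $K$. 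Because $\mathcal{D}^{L'}=\mathcal{D}^L$ has constant rank $p$ and is spanned by $L$, one can extract from a global basis of $L$ vector fields $Y_1,\dots,Y_p$ forming a local frame of $\mathcal{D}^L$ on a neighbourhood $W$ of $(x_0,u_0)$ (exactly as in the proof of Theorem \ref{Th:SecSL1}) and write $Y'|_W=\sum_{j=1}^p f^jY_j$ with $f^j\in C^\infty(W)$; this replacement is legitimate because $[\iota_{{\bf j}^nY'}\theta^\alpha_K]({\bf j}^n_x\sigma)$ depends only on the jet of $Y'$ at $(x_0,u_0)$. Arguing as in the proof of Lemma \ref{CharacLin}, $Q^\alpha_{Y'}=\sum_j f^jQ^\alpha_{Y_j}$ on $W$, so by (\ref{Prol}) one has $\iota_{{\bf j}^nY'}\theta^\alpha_K=D_KQ^\alpha_{Y'}$, and the Leibniz rule for total derivatives expresses this as a finite sum of terms of the form $(D_{K_1}f^j)(D_{K_2}Q^\alpha_{Y_j})$ with $K_1+K_2=K$; each such term carries a factor $D_{K_2}Q^\alpha_{Y_j}=\iota_{{\bf j}^nY_j}\theta^\alpha_{K_2}$ with $|K_2|\leq|K|\leq n-1$ and $Y_j\in L$, hence vanishing at ${\bf j}^n_x\sigma\in\mathcal{S}^n_L$. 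Therefore $[\iota_{{\bf j}^nY'}\theta^\alpha_K]({\bf j}^n_x\sigma)=0$, which is what was needed.

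For the converse, assume $\mathcal{S}^n_L=\mathcal{S}^n_{L'}$. If $n>1$ I would first check that $\pi^n_1(\mathcal{S}^n_L)=\mathcal{S}^1_L$: the inclusion $\subseteq$ holds because the $|K|=0$ equations defining $\mathcal{S}^n_L$ are the functions $Q^\alpha_{Y_j}$, which are pulled back from $J^1$; the reverse inclusion holds because $\mathcal{S}^n_L$ and $\mathcal{S}^1_L$ are sections of $\pi^n_0$ and $\pi^1_0$ over all of $J^0$, so the unique point of $\mathcal{S}^n_L$ over any $(x_0,u_0)$ must project to the unique point of $\mathcal{S}^1_L$ over $(x_0,u_0)$. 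Hence, for every $n\geq1$, the equality $\mathcal{S}^n_L=\mathcal{S}^n_{L'}$ yields $\mathcal{S}^1_L=\mathcal{S}^1_{L'}$. Writing these sections in the normal form (\ref{Sys1O}) as $u^\alpha_k=\phi^\alpha_k(x,u)$ and $u^\alpha_k={\phi'}^\alpha_k(x,u)$, their equality forces $\phi^\alpha_k={\phi'}^\alpha_k$ for all $\alpha,k$. Finally, Lemma \ref{Lem:LZ} identifies $\mathcal{D}^L$ with the span of the vector fields $\partial/\partial x^k+\sum_\alpha\phi^\alpha_k\,\partial/\partial u^\alpha$, $k=1,\dots,p$, and $\mathcal{D}^{L'}$ with the span of $\partial/\partial x^k+\sum_\alpha{\phi'}^\alpha_k\,\partial/\partial u^\alpha$; since $\phi^\alpha_k={\phi'}^\alpha_k$, these two distributions coincide.

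I expect the main obstacle to be the bookkeeping in the forward implication: extending the phenomenon that characteristics behave $C^\infty$-linearly on $\mathcal{S}^n_L$ (Lemma \ref{CharacLin}) to a combination $Y'=\sum f^jY_j$ whose coefficients $f^j$ are only locally defined, and verifying that every total derivative produced by the Leibniz expansion still lands among the defining functions of $\mathcal{S}^n_L$ while the whole computation remains local around the base point. The converse direction, by contrast, is an almost immediate consequence of Theorems \ref{Th:SecSL1}--\ref{Th:SecSL}, Lemma \ref{Lem:LZ}, and the normal-form description (\ref{Sys1O}).
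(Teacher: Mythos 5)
Your proposal is correct and follows essentially the same route as the paper: the implication $\mathcal{S}^n_L=\mathcal{S}^n_{L'}\Rightarrow\mathcal{D}^L=\mathcal{D}^{L'}$ via projection to $J^1$, the common normal form (\ref{Sys1O}) and Lemma \ref{Lem:LZ}, and the converse via writing elements of one space as $C^\infty(J^0)$-combinations of a frame taken from the other and killing all Leibniz terms on the characteristic set. The only cosmetic differences are that you redo the Leibniz computation directly (where the paper invokes Lemma \ref{CharacLin} together with the invertible matrix $\xi$) and use the symmetry of the hypothesis instead of inverting $\xi$, while also spelling out the identification $\pi^n_1(\mathcal{S}^n_L)=\mathcal{S}^1_L$ that the paper leaves implicit.
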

\begin{proof} Let us prove the direct part. Since $\mathcal{S}^n_L$ is a section of $\pi^n_0$, the projection of $\mathcal{S}^n_L=\mathcal{S}^n_{L'}$ to $J^1$ via $\pi^n_1$ gives rise to a section of $\pi^1_0$. Hence, $L$ and $L'$ determine the same first-order system of PDEs in normal form, let us say (\ref{NormalSys}). By virtue of Lemma \ref{Lem:LZ}, the distributions $\mathcal{D}^L$ and $\mathcal{D}^{L'}$ spanned by the elements $L$ and $L'$, respectively, coincide with the one generated by  (\ref{B}).
	Hence, $\mathcal{D}^L=\mathcal{D}^{L'}$.
	
	Conversely, if $L$ and $L'$ span the same distribution, and since $\mathcal{S}^n_L$ and $\mathcal{S}^n_{L'}$ are sections of $\pi^n_0$, then the ranks of $\mathcal{D}^L$, $\mathcal{D}^{L'}$ are, by virtue of Theorems  \ref{Th:SecSL1} and \ref{Th:SecSL}, equal to $p$ and there exists a family of functions $\xi^j_i$ on $J^0$ giving rise to an invertible $p\times p$ matrix $\xi$ mapping $p$ elements of $L$ spanning $\mathcal{D}^L$ onto $p$ elements of $L'$ spanning $\mathcal{D}^{L'}$. Consequently, the characteristics of the selected elements of $L$ and $L'$, let us say $Q^\alpha_j$ and $(Q')^\alpha_j$, with $\alpha=1,\ldots,q,j=1,\ldots,p$, satisfy $Q^\alpha_j=\sum_{k=1}^p\xi^k_j(Q')^\alpha_k$, with $i,j=1,\ldots,p$ and $\alpha=1,\ldots,q$. Therefore, it follows that $D_KQ^\alpha_j=\sum_{k=1}^p\xi^k_jD_K(Q')^\alpha_k$ on $\mathcal{S}_{L'}^n$ for every multi-index $K$ such that $|K|\leq n-1$. In view of Lemma \ref{CharacLin}, the same can be extended to arbitrary elements of $L$. Hence, $\mathcal{S}^n_{L'}\subset \mathcal{S}^n_L$. Since $\xi$ is invertible, one can repeat the above procedure considering that $(Q')^\alpha_j=\sum_{k=1}^p(\xi^{-1})^k_jQ_k^\alpha$ to obtain that $\mathcal{S}^n_{L}\subset \mathcal{S}^n_{L'}$ and $\mathcal{S}^n_L=\mathcal{S}^n_{L'}$.

	\end{proof}

\begin{lem}\label{Lem:InvAbe} If the vector fields $Z_1,\ldots,Z_p$ given by $(\ref{B})$ span an involutive distribution on $J^1$, then $L=\langle Z_1,\ldots,Z_p\rangle$ is an Abelian Lie algebra.
	\end{lem}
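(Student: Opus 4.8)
The plan is to compute the Lie brackets $[Z_i,Z_j]$ directly and to exploit the very rigid form of the vector fields $(\ref{B})$ together with the involutivity hypothesis. First I would record that, since each $Z_j=\partial/\partial x^j+\sum_{\alpha=1}^q\phi^\alpha_j\,\partial/\partial u^\alpha$, the vector fields $Z_1,\dots,Z_p$ are pointwise linearly independent: their $X$-components are precisely $\partial/\partial x^1,\dots,\partial/\partial x^p$. Hence $Z_1,\dots,Z_p$ form a local frame of the rank-$p$ distribution $\mathcal D=\langle Z_1,\dots,Z_p\rangle$, and by involutivity every bracket $[Z_i,Z_j]$ is a section of $\mathcal D$, so there exist (uniquely determined) functions $f^k_{ij}$ with $[Z_i,Z_j]=\sum_{k=1}^p f^k_{ij}Z_k$.

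The key computation is the bracket itself. Because the $\partial/\partial x^j$-coefficients of the $Z_j$ are constants, all the $\partial/\partial x$-terms cancel in $[Z_i,Z_j]$ and one is left with
\[
[Z_i,Z_j]=\sum_{\alpha=1}^q\bigl(Z_i(\phi^\alpha_j)-Z_j(\phi^\alpha_i)\bigr)\frac{\partial}{\partial u^\alpha},
\]
i.e. $[Z_i,Z_j]$ is vertical with respect to $\pi^0_{-1}$: it has no component along $\partial/\partial x^l$ for any $l=1,\dots,p$.

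Now comparing $\partial/\partial x^l$-components on the two sides of $[Z_i,Z_j]=\sum_{k=1}^p f^k_{ij}Z_k$ finishes the argument: the left-hand side contributes $0$, while the right-hand side contributes $f^l_{ij}$ (since $Z_k$ has $\partial/\partial x^l$-coefficient equal to $\delta^l_k$). Thus $f^l_{ij}=0$ for every $l$, so $[Z_i,Z_j]=0$ for all $i,j$. In particular the real linear span $L=\langle Z_1,\dots,Z_p\rangle$ is closed under the Lie bracket and the bracket vanishes identically on it, so $L$ is an Abelian Lie algebra. There is no real obstacle here; the only point that needs to be seen clearly is that a nonzero vertical vector field cannot be written as a nontrivial $C^\infty$-combination of $Z_1,\dots,Z_p$, precisely because these project under $\pi^0_{-1}$ onto the linearly independent fields $\partial/\partial x^1,\dots,\partial/\partial x^p$.
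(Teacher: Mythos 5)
Your proof is correct and follows essentially the same route as the paper: both observe that $[Z_i,Z_j]$ projects to zero under $\pi^0_{-1}$ (is vertical) because the $x$-components of the $Z_j$ are constant, and then compare $\partial/\partial x^l$-coefficients in the involutivity relation $[Z_i,Z_j]=\sum_k f^k_{ij}Z_k$ to force all $f^k_{ij}=0$. No gaps; your explicit remark that verticality prevents any nontrivial $C^\infty$-combination of the $Z_k$ is exactly the point the paper's proof uses.
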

\begin{proof}
	By the involutivity assumption on the distribution spanned by the elements of $L$, we have that
	$$
	[Z_j,Z_k]=\sum_{m=1}^pf_{jk}^mZ_m,\qquad j,k=1,\ldots,p,
	$$  
	for certain functions $f^m_{jk}\in C^\infty(X\times U)$ with $j,k,m=1,\ldots,p$. Since the left-hand side projects onto zero relative to $\pi^0_{-1}$, the right-hand side does also. Hence,
	$$
	\sum_{m=1}^pf_{jk}^m(x,u)\frac{\partial}{\partial x^m}=0
	$$
	for every $(x,u)\in X\times U$. Therefore, $f_{jk}^m=0$ for all possible indices $j,k,m$ and the vector fields $Z_1,\ldots,Z_p$ are in involution.
	\end{proof}

\begin{thm}\label{Th:ProlSL} Let  $\mathcal{S}^n_L$ be a section of $\pi^n_{0}$. The prolongations to $J^n$ of the elements of $L$ are tangent to ${\mathcal{S}^n_L}$ if and only if the distribution $\mathcal{D}^L$ spanned by the elements of $L$ is involutive.
\end{thm}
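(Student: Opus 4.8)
The plan is to reduce the whole equivalence to a single Lie-derivative identity. First I would record that, for any two vector fields $Y,Z$ on $J^0$ and any contact form $\theta\in\mathcal{CF}(J^n)$, the function $\iota_{{\bf j}^nZ}\theta$ satisfies
\[
({\bf j}^nY)\big(\iota_{{\bf j}^nZ}\theta\big)=\iota_{{\bf j}^n[Y,Z]}\theta+\iota_{{\bf j}^nZ}\big(\mathcal{L}_{{\bf j}^nY}\theta\big),
\]
which is just the Cartan commutation rule $[\mathcal{L}_{{\bf j}^nY},\iota_{{\bf j}^nZ}]=\iota_{[{\bf j}^nY,{\bf j}^nZ]}$ applied to $\theta$, together with the two facts recalled in Section \ref{GeomPre}: that prolongation commutes with the Lie bracket, $[{\bf j}^nY,{\bf j}^nZ]={\bf j}^n[Y,Z]$, and that $\mathcal{L}_{{\bf j}^nY}\theta$ is again a contact form. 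I would then observe that when $Y,Z\in L$ the last summand vanishes on $\mathcal{S}^n_L$ by Definition \ref{CharSys}, so that at points of $\mathcal{S}^n_L$ one has $({\bf j}^nY)(\iota_{{\bf j}^nZ}\theta)\big|_{\mathcal{S}^n_L}=\iota_{{\bf j}^n[Y,Z]}\theta\big|_{\mathcal{S}^n_L}$ for all $Y,Z\in L$ and all $\theta\in\mathcal{CF}(J^n)$.

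For the direction ``$\mathcal{D}^L$ involutive $\Rightarrow$ prolongations tangent'', I would use that, $\mathcal{S}^n_L$ being a section of $\pi^n_0$, Theorems \ref{Th:SecSL1} and \ref{Th:SecSL} force $\mathcal{D}^L$ to be regular of rank $p$, and that the proofs of those theorems exhibit $\mathcal{S}^n_L$, locally, as the common zero set of the functions $\iota_{{\bf j}^nY_j}\theta^\alpha_K$ with $|K|\leq n-1$, for a basis $Y_1,\dots,Y_l$ of $L$ (equivalently, $\mathcal{S}^n_L=\{u^\alpha_K=\phi^\alpha_K(x,u)\}$). Consequently ${\bf j}^nY$ is tangent to $\mathcal{S}^n_L$ exactly when $({\bf j}^nY)(\iota_{{\bf j}^nY_j}\theta^\alpha_K)$ vanishes on $\mathcal{S}^n_L$ for all $j,\alpha,K$, and by the identity above this equals $\iota_{{\bf j}^n[Y,Y_j]}\theta^\alpha_K\big|_{\mathcal{S}^n_L}$. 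If $\mathcal{D}^L$ is involutive I can write $[Y,Y_j]=\sum_m f^m_j Y_m$ with $f^m_j\in C^\infty(J^0)$, and then Lemma \ref{CharacLin} gives $\iota_{{\bf j}^n[Y,Y_j]}\theta^\alpha_K=\sum_m f^m_j\,\iota_{{\bf j}^nY_m}\theta^\alpha_K$ at points of $\mathcal{S}^n_L$, each summand vanishing there; hence every ${\bf j}^nY$ with $Y\in L$ is tangent to $\mathcal{S}^n_L$.

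For the converse, assuming every ${\bf j}^nY$ with $Y\in L$ is tangent to $\mathcal{S}^n_L$, I would fix $Y,Z\in L$ and note that $\iota_{{\bf j}^nZ}\theta^\alpha$ vanishes on $\mathcal{S}^n_L$ (as $Z\in L$), so tangency of ${\bf j}^nY$ forces $({\bf j}^nY)(\iota_{{\bf j}^nZ}\theta^\alpha)$ to vanish on $\mathcal{S}^n_L$ as well; the first-paragraph identity then yields $\iota_{{\bf j}^n[Y,Z]}\theta^\alpha\big|_{\mathcal{S}^n_L}=0$ for $\alpha=1,\dots,q$. Writing $[Y,Z]=\sum_i a^i\partial/\partial x^i+\sum_\alpha b^\alpha\partial/\partial u^\alpha$ and using that $\pi^n_1(\mathcal{S}^n_L)=\mathcal{S}^1_L$ is the section $u^\alpha_i=\phi^\alpha_i(x,u)$, this reads $b^\alpha=\sum_i a^i\phi^\alpha_i$ on $J^0$, i.e. $[Y,Z]=\sum_i a^i Z_i$ with $Z_i$ as in (\ref{B}); since Lemma \ref{Lem:LZ} says $Z_1,\dots,Z_p$ span $\mathcal{D}^L$, the bracket $[Y,Z]$ is a section of $\mathcal{D}^L$. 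Taking $Y=Y_i$, $Z=Y_j$ and propagating to arbitrary $C^\infty(J^0)$-combinations of the $Y_j$ by the Leibniz rule for Lie brackets, I conclude $\mathcal{D}^L$ is involutive. I expect the main obstacle to be bookkeeping rather than conceptual: justifying that $\mathcal{S}^n_L$ really is cut out (as a manifold, with enough independent differentials) by the functions $\iota_{{\bf j}^nY_j}\theta^\alpha_K$ so that the tangency test applies, and checking that restricting attention to the order-zero contact forms $\theta^\alpha$ suffices in the converse direction.
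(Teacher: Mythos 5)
Your forward implication coincides with the paper's: the same commutation identity $({\bf j}^nZ)(\iota_{{\bf j}^nY}\theta)=\iota_{{\bf j}^nY}(\mathcal{L}_{{\bf j}^nZ}\theta)+\iota_{{\bf j}^n[Z,Y]}\theta$, the vanishing of the Lie-derivative term on $\mathcal{S}^n_L$ because $\mathcal{L}_{{\bf j}^nZ}\theta$ is again a contact form, involutivity to write the bracket as a $C^\infty(J^0)$-combination of elements of $L$, and Lemma \ref{CharacLin} to pull the coefficients out on $\mathcal{S}^n_L$; moreover, the tangency-versus-first-integral equivalence you flag as "bookkeeping" is assumed at exactly the same level of detail in the paper's own proof, so it is not a gap relative to it. The converse is where you genuinely diverge: the paper argues by contradiction, adjoining $[Z,Y]$ to $L$ to form $L'=L\oplus\langle[Z,Y]\rangle$, deducing $\mathcal{S}^n_{L'}=\mathcal{S}^n_L$ from the same identity, and contradicting the rank-$p$ conclusion of Theorems \ref{Th:SecSL1} and \ref{Th:SecSL} since $\mathcal{D}^{L'}$ would have rank $p+1$; you instead extract from tangency only the vanishing of $\iota_{{\bf j}^n[Y,Z]}\theta^\alpha$ on $\mathcal{S}^n_L$, use that the section covers all of $J^0$ to promote this to the identity $b^\alpha=\sum_i a^i\phi^\alpha_i$ on $J^0$, and so exhibit $[Y,Z]=\sum_i a^iZ_i$ explicitly as a section of $\mathcal{D}^L$ via Lemma \ref{Lem:LZ}, whence involutivity by a frame argument. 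Both arguments are correct; the paper's buys brevity by recycling its section-characterisation theorems, while yours is direct and constructive, makes clear that only the order-zero contact forms $\theta^1,\ldots,\theta^q$ are needed in this direction, and identifies the bracket's decomposition rather than merely ruling out its failure.
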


\begin{proof} Assume first that $\mathcal{D}^L$ is involutive. The prolongations to $J^n$ of the elements of $L$ are tangent to ${\mathcal{S}^n_L}$ if and only if  the functions $\iota_{{\bf j}^nY}\theta$, where $Y$ is any element of $L$ and $\theta$ is any contact form on $J^n$, are first-integrals of any ${\bf j}^n\,Z$ with $Z\in L$. Now,
         \begin{equation}\label{Fun}
         {\bf j}^nZ(\iota_{{\bf j}^nY}\theta)=\iota_{{\bf j}^nY}[\mathcal{L}_{{\bf j}^nZ}\theta]+\iota_{{\bf j}^n[Z,Y]}\theta.
         \end{equation}
         Let us analyse both right-hand terms on $\mathcal{S}_L^n$. First,
        as ${\bf j}^n\,Z$ is a prolongation to $J^n$ of  $Z$, one has that $\mathcal{L}_{{\bf j}^nZ}\theta$ is a contact form and, by the definition of $\mathcal{S}^n_L$, one has
         \begin{equation}\label{con1A}
        \iota_{{\bf j}^nY}[ \mathcal{L}_{{\bf j}^nZ}\theta]|_{\mathcal{S}^n_L}=0.
         \end{equation}
         Second, we make the assumption that $\mathcal{S}_L^n$ is a section of $\pi^n_0$. Then Theorem \ref{Th:SecSL} states that $\mathcal{D}^L$ has order $p$. Since $\mathcal{D}^L$ is assumed to be involutive, we have  $[Z,Y]=\sum_{k=1}^{p}f^kY_k$ for  the functions $f^1,\ldots,f^p\in C^\infty(J^0)$ and some elements $Y_1,\ldots,Y_p$, with $Y_1\wedge\ldots\wedge Y_p\neq 0$, chosen from a basis of $L$. In view of Lemma \ref{CharacLin}, one gets that 
         \begin{equation}\label{con2A}
         \iota_{{\bf j}^n[Z,Y]}\theta|_{{\mathcal{S}^n_L}}=\iota_{{\bf j}^n\left[\sum_{k=1}^{p }f^kY_k\right]} \theta\big|_{{\mathcal{S}^n_L}}\!\!\!\!=\sum_{k=1}^{p }f^k\iota_{{\bf j}^nY_k}\theta\big|_{{\mathcal{S}^n_L}}=0.
         \end{equation}
         Using (\ref{con1A}) and (\ref{con2A}) to simplify (\ref{Fun}), we get that ${\bf j}^n\,Z[\iota_{{\bf j}^nY}\theta]|_{\mathcal{S}^n_L}=0$. Consequently, the prolongations to $J^n$ of the elements of $L$ are tangent to ${\mathcal{S}^n_L}$.

We now prove the converse by contradiction. Assume that  $
\mathcal{D}^L$ is not involutive and that the prolongations to $J^n$ of the elements of $L$ are tangent to $\mathcal{S}^n_L$. Then, there exist $Z,Y\in L$ such that $[Z,Y]$ does not take values in $\mathcal{D}^L$. Thus, the elements of the linear space $L'=L\oplus \langle [Z,Y]\rangle$ span a distribution $\mathcal{D}^{L'}$ of rank $p+1$. Since by assumption ${\bf j}^n\, Z [\iota_{{\bf j}^n\, Y}\theta]|_{\mathcal{S}^n_L}=0$ for any contact form on $J^n$, the equality (\ref{Fun}) shows that  $\iota_{{\bf j}^n[Z,Y]}\theta|_{{\mathcal{S}^n_L}}=0$, and that $\mathcal{S}^n_{L'}$ contains ${\mathcal{S}^n_L}$, namely $\mathcal{S}^n_{L}\subset \mathcal{S}^n_{L'}$. Since $L\subset L'$, one has that  $\mathcal{S}^n_{L'}\subset \mathcal{S}_L^n$ and therefore $\mathcal{S}^n_{L'}=\mathcal{S}^n_L$. Since $\mathcal{S}^n_{L'}$ is a section, Theorems \ref{Th:SecSL} and \ref{Th:SecSL1} state that $\mathcal{D}^{L'}$ has rank $p$. This is a contradiction and $\mathcal{D}^L$ must be involutive. 
         
        \end{proof}

The theory of conditional symmetries focuses on the case where $L$ is a Lie algebra. Then, $\mathcal{D}^L$ is involutive \cite{Palais} and, therefore, an immediate consequence of  Theorem \ref{Th:ProlSL} is the following trivial corollary. As we do not assume that $\mathcal{S}^n_L$ is a section of $\pi^n_0$,  the corollary can be applied to general conditional symmetries that need not be of the Clairin type \cite{Ol92}.

\begin{cor}\label{LTan} If $L$ is a Lie algebra of vector fields on $X\times U$, then the prolongations to $J^n$ of the elements of $L$ are tangent to ${\mathcal{S}^n_L}$.       
\end{cor}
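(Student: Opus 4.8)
The plan is to mimic the first half of the proof of Theorem \ref{Th:ProlSL}, observing that the section hypothesis there was used only to guarantee, via Theorem \ref{Th:SecSL}, that $\mathcal{D}^L$ has rank $p$ and hence to re-express $[Z,Y]$ through $p$ fixed basis elements; when $L$ is a Lie algebra that step is free of charge, since $[Z,Y]\in L$ automatically. Thus the whole argument should go through verbatim without assuming that $\mathcal{S}^n_L$ is a section, or even a submanifold, of $J^n$.

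First I would recall that a Lie algebra of vector fields spans an involutive (possibly singular) distribution $\mathcal{D}^L$ \cite{Palais}; what is actually used below is only that $[Z,Y]\in L$ for all $Z,Y\in L$. Next I would fix the meaning of tangency of ${\bf j}^nZ$ to $\mathcal{S}^n_L$ in the possibly singular setting: it is the statement that each defining function $\iota_{{\bf j}^nY}\theta$ of $\mathcal{S}^n_L$, with $Y\in L$ and $\theta\in\mathcal{CF}(J^n)$, is annihilated by ${\bf j}^nZ$ along $\mathcal{S}^n_L$. This reduces the claim to showing ${\bf j}^nZ(\iota_{{\bf j}^nY}\theta)|_{\mathcal{S}^n_L}=0$ for all $Z,Y\in L$ and all $\theta\in\mathcal{CF}(J^n)$.

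Then I would invoke the identity (\ref{Fun}), namely ${\bf j}^nZ(\iota_{{\bf j}^nY}\theta)=\iota_{{\bf j}^nY}[\mathcal{L}_{{\bf j}^nZ}\theta]+\iota_{{\bf j}^n[Z,Y]}\theta$, valid for arbitrary vector fields and one-forms. The first summand vanishes on $\mathcal{S}^n_L$ because ${\bf j}^nZ$ is a prolongation, so $\mathcal{L}_{{\bf j}^nZ}\theta$ is again a contact form and hence belongs to $\mathcal{CF}(J^n)$, which is exactly the class of forms killed by ${\bf j}^nY$ on $\mathcal{S}^n_L$ by Definition \ref{CharSys}. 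For the second summand, fix a basis $Y_1,\dots,Y_l$ of $L$; since $L$ is a Lie algebra, $[Z,Y]=\sum_{k=1}^l c^k Y_k$ with constants $c^k$, so by linearity of the prolongation over constants (equivalently, by Lemma \ref{CharacLin}) $\iota_{{\bf j}^n[Z,Y]}\theta|_{\mathcal{S}^n_L}=\sum_{k=1}^l c^k\,\iota_{{\bf j}^nY_k}\theta|_{\mathcal{S}^n_L}=0$, again by Definition \ref{CharSys}. Combining the two contributions gives ${\bf j}^nZ(\iota_{{\bf j}^nY}\theta)|_{\mathcal{S}^n_L}=0$, which is the asserted tangency.

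Since no step requires $\mathcal{S}^n_L$ to be a section — or even a submanifold — the corollary applies to general, not necessarily Clairin-type, conditional symmetries, as claimed. The only point I would be careful about is precisely the formulation of tangency in the possibly singular case; once that is settled the statement is a one-line specialization of (\ref{Fun}), and no genuine obstacle remains.
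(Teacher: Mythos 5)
Your proof is correct, and it is essentially the computation hidden behind the paper's one-line derivation, but written in a more careful and genuinely more self-contained way. The paper proves the corollary by citing Theorem \ref{Th:ProlSL} together with the involutivity of $\mathcal{D}^L$ for a Lie algebra \cite{Palais}; taken literally, that citation is slightly loose, since Theorem \ref{Th:ProlSL} is stated under the hypothesis that $\mathcal{S}^n_L$ is a section of $\pi^n_0$, which the corollary explicitly does not assume. You identify precisely where that hypothesis enters the theorem's proof --- only in rewriting $[Z,Y]$ as a $C^\infty(J^0)$-combination of $p$ independent elements of $L$, which requires rank $p$ and Lemma \ref{CharacLin} --- and you observe that for a Lie algebra this step is unnecessary: $[Z,Y]\in L$, so $\iota_{{\bf j}^n[Z,Y]}\theta$ vanishes on $\mathcal{S}^n_L$ directly by Definition \ref{CharSys} (your detour through a basis with constant coefficients is not even needed). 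Combined with the vanishing of $\iota_{{\bf j}^nY}[\mathcal{L}_{{\bf j}^nZ}\theta]$ on $\mathcal{S}^n_L$, because $\mathcal{L}_{{\bf j}^nZ}\theta$ is again a contact form, identity (\ref{Fun}) gives the claim. What your route buys is exactly the generality the paper asserts in the sentence preceding the corollary: the statement holds even when $\mathcal{S}^n_L$ is not a section (or not even a submanifold), provided tangency is interpreted, as you do and as the theorem's proof itself does, as ${\bf j}^nZ$ annihilating every defining function $\iota_{{\bf j}^nY}\theta$ along $\mathcal{S}^n_L$. No gap remains in your argument.
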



Any section of $\pi^n_0$, let us say 
$$\sigma: (x,u)\in X\times U\mapsto (x^i,u^\alpha
,\phi^\alpha_i(x,u),\ldots,\phi^\alpha_{K_{n-1}}(x,u),\phi^\alpha_{K_n}(x,u))\in J^n,$$ 
where $|K_j|=j$ for $j=2,\ldots,n$,  leads to an $n$-th order system of PDEs in normal form
\begin{equation}\label{HOSys}
\frac{\partial u^\alpha}{\partial x^i}=\phi^\alpha_i(x,u),\ldots,\frac{\partial^{j} u^\alpha}{\partial x^{K_j}}=\phi^\alpha_{K_j}(x,u),
\end{equation}
for $\alpha=1,\ldots,q$, $i=1,\ldots,p$, $j=1,\ldots,n$,
and vice versa. This fact is frequently employed in the theory of conditional
symmetries, where $\mathcal{S}^n_L$ is constructed in such a way that it is a section of
$\pi^n_0$ and, consequently, amounts to a system of PDEs in normal form (cf. \cite{GMR97,GR95,OV95}).

From now on we assume that all characteristic systems $\mathcal{S}^n_L$  are sections of $\pi^n_0$. Theorem \ref{Th:SecSL} then ensures that the distribution $\mathcal{D}^L$ spanned by the elements of $L$ has rank $p$ and its projection to $X$ (via $\pi^0_{-1}$) is $TX$. Our main objective in this section is to show that the system of PDEs related to $\mathcal{S}^n_L$ is locally solvable if and only if $\mathcal{D}^L$ is an involutive distribution.

\begin{prop}\label{Prop:SLInt} Let $L$ be a linear space of vector fields whose  $\mathcal{S}^n_L$ is a section of $\pi^n_0$. Then, $\mathcal{S}^n_L$ is locally solvable if and only if the vector fields of $L$ span an involutive distribution $\mathcal{D}^L$.
\end{prop}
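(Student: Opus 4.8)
The plan is to reduce the statement to the already-established Proposition \ref{ZCC-LS} on first-order systems in normal form. Since $\mathcal{S}^n_L$ is a section of $\pi^n_0$, Theorems \ref{Th:SecSL1} and \ref{Th:SecSL} guarantee that $\mathcal{D}^L$ has rank $p$ and projects onto $TX$ via $\pi^0_{-1}$, and that $\mathcal{S}^1_L$ is a section of $\pi^1_0$; hence $\mathcal{S}^1_L$ amounts to a first-order system in normal form (\ref{NormalSys}), and Lemma \ref{Lem:LZ} shows that the vector fields (\ref{B}) it determines, say $Z_1,\ldots,Z_p$, span exactly $\mathcal{D}^L$. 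Combining Lemma \ref{Lem:InvAbe} with Proposition \ref{ZCC-LS}, the distribution $\mathcal{D}^L$ is involutive if and only if $\langle Z_1,\ldots,Z_p\rangle$ is Abelian, i.e. if and only if the first-order system (\ref{NormalSys}) is locally solvable. So it only remains to relate local solvability of the full characteristic system $\mathcal{S}^n_L$ to that of (\ref{NormalSys}).

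For the implication that involutivity of $\mathcal{D}^L$ forces $\mathcal{S}^n_L$ to be locally solvable, I would argue as follows. By the above, (\ref{NormalSys}) is locally solvable, so through each $(x_0,u_0)\in X\times U$ there is a solution $u(x)$ of (\ref{NormalSys}); put $\sigma_u(x)=(x,u(x))$. If $n=1$ this already proves local solvability of $\mathcal{S}^1_L$. If $n>1$, I claim ${\bf j}^n\sigma_u$ lies in $\mathcal{S}^n_L$: taking a basis $Y_1,\ldots,Y_l$ of $L$ as in (\ref{basisL}), the characteristics $Q^\alpha_{Y_j}$ vanish along ${\bf j}^1\sigma_u$ — this is what (\ref{NormalSys}) encodes for the $p$ distinguished vector fields, and it extends to all of $L$ because $Q^\alpha_{fY+gY'}=fQ^\alpha_Y+gQ^\alpha_{Y'}$ and every $Y_j$ is a $C^\infty(J^0)$-combination of the distinguished ones — and since $D_Kf\circ{\bf j}^{m}\sigma=\partial^{|K|}(f\circ{\bf j}^{m-|K|}\sigma)/\partial x^K$, every $D_KQ^\alpha_{Y_j}$ with $|K|\le n-1$, i.e. every function cutting out $\mathcal{S}^n_L$, vanishes along ${\bf j}^n\sigma_u$. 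Now, given ${\bf j}^n_x\sigma\in\mathcal{S}^n_L$ with $\pi^n_0({\bf j}^n_x\sigma)=(x_0,u_0)$, choose $u$ to be the solution of (\ref{NormalSys}) through $(x_0,u_0)$; then ${\bf j}^n\sigma_u(x_0)\in\mathcal{S}^n_L$ and $\pi^n_0({\bf j}^n\sigma_u(x_0))=(x_0,u_0)$, so by the section property ${\bf j}^n_x\sigma={\bf j}^n\sigma_u(x_0)$, whence ${\bf j}^n_x\sigma$ lies on the $n$-jet prolongation of the solution $u$. This is exactly local solvability of $\mathcal{S}^n_L$.

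For the converse, suppose $\mathcal{S}^n_L$ is locally solvable. If $n>1$, then $\mathcal{S}^n_L$ being a section already forces $\mathcal{D}^L$ to be involutive by Theorem \ref{Th:SecSL}, so there is nothing more to do (local solvability is not even used here). If $n=1$, then $\mathcal{S}^1_L$ is literally the first-order normal-form system (\ref{NormalSys}), and local solvability together with Proposition \ref{ZCC-LS} gives that $Z_1,\ldots,Z_p$ span an Abelian — in particular involutive — distribution; by Lemma \ref{Lem:LZ} this distribution is $\mathcal{D}^L$, hence $\mathcal{D}^L$ is involutive.

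The heart of the argument is the $n>1$ half of the first implication: promoting a solution of the associated first-order system to a solution of the full $n$-th order characteristic system and then using the section hypothesis to make it pass through the prescribed jet. The point requiring care is the propagation of the vanishing of the characteristics — and of their total derivatives — from ${\bf j}^1\sigma_u$ to ${\bf j}^n\sigma_u$; everything else is routine bookkeeping built on Proposition \ref{ZCC-LS} and Lemmas \ref{Lem:LZ} and \ref{Lem:InvAbe}.
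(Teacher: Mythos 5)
Your proof is correct, but it follows a genuinely different route from the paper's, most visibly in the direction ``involutive $\Rightarrow$ locally solvable''. The paper works entirely on $J^n$: it invokes Theorem \ref{Th:ProlSL} to get that the prolongations of the elements of $L$ are tangent to $\mathcal{S}^n_L$, takes the integral manifold of the prolonged distribution through a given point of $\mathcal{S}^n_L$, regards it as a section contained in $\mathcal{S}^n_L$, and appeals to Remark \ref{Uni} to conclude that this section is holonomic and hence a solution. You instead descend to $J^0$: you solve the associated first-order system (\ref{NormalSys}) through the projected point by combining Proposition \ref{ZCC-LS} with Lemmas \ref{Lem:LZ} and \ref{Lem:InvAbe}, prolong that solution to $J^n$, check membership in $\mathcal{S}^n_L$ directly via the identity $(D_Kf)\circ {\bf j}^{m+|K|}\sigma=\partial^{|K|}(f\circ{\bf j}^{m}\sigma)/\partial x^K$ together with the $C^\infty(J^0)$-linearity of characteristics (which lets you pass from the $p$ rectified generators to an arbitrary basis of $L$ without needing Lemma \ref{CharacLin}, since this linearity holds off $\mathcal{S}^n_L$ as well), and then use the fact that a section meets each fibre of $\pi^n_0$ exactly once to force the prescribed jet onto the prolonged solution. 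This is more elementary and self-contained, bypassing Theorem \ref{Th:ProlSL} and Remark \ref{Uni} altogether, at the price of a coordinate computation; the paper's route instead displays the tangency structure that it reuses elsewhere (e.g. in Theorem \ref{Sym}). In the converse direction you also deviate: for $n>1$ you note that involutivity already follows from the section hypothesis alone via Theorem \ref{Th:SecSL}, so local solvability is not even used (consistent with the redundancy the paper itself points out in the introduction), whereas for $n=1$ your argument coincides with the paper's, which handles all $n$ uniformly by projecting the locally solvable $\mathcal{S}^n_L$ onto $\mathcal{S}^1_L$ and then applying Proposition \ref{ZCC-LS} and Lemma \ref{Lem:LZ}.
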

\begin{proof} Assume first that $\mathcal{D}^L$ is involutive. Theorem \ref{Th:ProlSL} ensures that $\mathcal{D}^L$ is tangent to $\mathcal{S}^n_L$. Given any point ${\bf j}^n_x\sigma\in \mathcal{S}^n_L$, there exists a unique integral submanifold  $\sigma^{(n)}$ of $\mathcal{D}^L$ passing through this point. Since $\mathcal{D}^L$ projects onto $TX$, this can be considered as a section of $\pi^n_0$. As the elements of $\mathcal{D}^L$ are tangent to $\mathcal{S}^n_L$, the section $\sigma^{(n)}$ is contained in  $\mathcal{S}^n_L$. Due to Remark \ref{Uni},  the section $\sigma^{(n)}$  is holonomic. Hence, it gives rise to a solution of $\mathcal{S}^n_L$, which becomes a locally solvable system of PDEs. 
	
	The projection of $\mathcal{S}^n_L$ to $J^1$ is given by the points of $J^1$ satisfying the condition that the characteristics of elements of $L$ vanish. This is the condition characterising $\mathcal{S}^1_L$. Hence, the projection of $\mathcal{S}^n_L$ to $J^1$ is $\mathcal{S}^1_L$.	If $\mathcal{S}^n_L$ is locally solvable, then its projection to $J^1$ via $\pi^n_1$, namely $\mathcal{S}^1_L$, is also locally solvable and, in view of Proposition \ref{ZCC-LS}, the compatibility conditions for this system are satisfied. The vector fields $Z_1,\ldots,Z_p$ of the form  (\ref{B})  describing $\mathcal{S}^1_n$ and those of $L$ span the same distribution by virtue of Lemma \ref{Lem:LZ}. Then, one has that $\mathcal{D}^L$ is involutive.

\end{proof}

The above proposition justifies the following definition.
\begin{defn} A section $\hat \sigma$ of $\pi^n_0$ is  {\it integrable} when its associated system of PDEs is locally solvable.
        \end{defn}

Finally, let us give a rather immediate consequence of Proposition \ref{Prop:SLInt} that will allow us to simplify the application of the Clairin theory of conditional symmetries.

\begin{cor}\label{Simplification} If $\mathcal{S}_L^n$ is integrable,  then every initial condition admits a unique solution. Moreover, the solutions of $\mathcal{S}^n_L$ and $\mathcal{S}^1_L$ are the same.
\end{cor}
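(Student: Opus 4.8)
The plan is to derive Corollary \ref{Simplification} as an essentially formal consequence of Proposition \ref{Prop:SLInt} together with Theorem \ref{Th:SecSL} and Remark \ref{Uni}. Since we are in the running assumption that $\mathcal{S}^n_L$ is a section of $\pi^n_0$, and we are told $\mathcal{S}^n_L$ is integrable, Proposition \ref{Prop:SLInt} tells us that $\mathcal{D}^L$ is involutive of rank $p$ and projects onto $TX$. The first claim, uniqueness for each initial condition, is then immediate: an integral leaf of the involutive distribution $\mathcal{D}^L$ through a point of $\mathcal{S}^n_L$ is unique, and because $\mathcal{D}^L$ projects onto $TX$ this leaf is a section of $\pi^n_0$ lying inside $\mathcal{S}^n_L$; by Remark \ref{Uni} such a section is holonomic and hence arises from a genuine solution $u(x)$ of the system. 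Conversely any two solutions through the same initial datum $(x_0,u_0)$ give integral leaves of $\mathcal{D}^L$ through the corresponding point ${\bf j}^n_{x_0}\sigma\in\mathcal{S}^n_L$, so they coincide.

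For the second claim I would first handle the case $n=1$, where the statement is vacuous (the two systems literally coincide), and then treat $n>1$. The inclusion ``solutions of $\mathcal{S}^n_L$ are solutions of $\mathcal{S}^1_L$'' is trivial once one observes, as in the proof of Proposition \ref{Prop:SLInt}, that the projection of $\mathcal{S}^n_L$ to $J^1$ under $\pi^n_1$ is exactly $\mathcal{S}^1_L$ (both are cut out by the vanishing of the characteristics of the elements of $L$): prolonging a solution of $\mathcal{S}^n_L$ down to $J^1$ lands it in $\mathcal{S}^1_L$. The substantive direction is the reverse inclusion. Here I would argue that a solution $u(x)$ of $\mathcal{S}^1_L$ has the property that all characteristics $Q^\alpha_Y$ of elements $Y\in L$ vanish along ${\bf j}^1\sigma$ with $\sigma(x)=(x,u(x))$; applying the total derivative operators $D_K$ and pulling back along ${\bf j}^n\sigma$ — using that for a holonomic section $({\bf j}^n\sigma)^*D_Kf = \partial_K[(\,\cdot\,)^*f]$ along curves — shows that $D_KQ^\alpha_Y$ also pulls back to zero for all $|K|\le n-1$. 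Hence ${\bf j}^n\sigma$ takes values in $\mathcal{S}^n_L$, i.e. $u(x)$ solves $\mathcal{S}^n_L$. Alternatively, and perhaps more cleanly, one invokes Lemma \ref{Lem:LZ}: the system $\mathcal{S}^1_L$ in normal form is $\partial u^\alpha/\partial x^j=\phi^\alpha_j(x,u)$ with $Z_j$ of \eqref{B} spanning $\mathcal{D}^L$; since $\mathcal{D}^L$ is involutive, $\mathcal{S}^1_L$ is locally solvable and its solutions are integral leaves of $\langle Z_1,\dots,Z_p\rangle=\mathcal{D}^L$, which are exactly the holonomic sections contained in $\mathcal{S}^n_L$ by the argument of Proposition \ref{Prop:SLInt}.

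The main obstacle — really the only non-bookkeeping point — is justifying that differentiating the first-order constraints reproduces the higher-order constraints, equivalently that no genuinely new condition is imposed at orders $2,\dots,n$ beyond what is forced by $\mathcal{S}^1_L$ on holonomic sections. This is precisely where involutivity of $\mathcal{D}^L$ is used, and it is already packaged in Remark \ref{Uni} (the coordinates $u^\alpha_K$ of a section $\mathcal{S}^n_L$ of $\pi^n_0$, $n>1$, are determined by those with $|R|<|K|$, so the section is determined by its projection to $J^1$ and its solutions are holonomic). So in the write-up I would: (i) invoke Proposition \ref{Prop:SLInt} to get involutivity of $\mathcal{D}^L$; (ii) deduce uniqueness of solutions per initial condition from unique integrability of $\mathcal{D}^L$ plus Remark \ref{Uni}; (iii) note $\pi^n_1(\mathcal{S}^n_L)=\mathcal{S}^1_L$ for the easy inclusion of solution sets; (iv) use that the common integral leaves of $\mathcal{D}^L$ realise both the solutions of $\mathcal{S}^1_L$ (Lemma \ref{Lem:LZ}, Proposition \ref{ZCC-LS}) and of $\mathcal{S}^n_L$ (Proposition \ref{Prop:SLInt}, Remark \ref{Uni}), giving the reverse inclusion and hence equality of solution sets.
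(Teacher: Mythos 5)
Your argument is correct and follows exactly the route the paper intends: the corollary is stated there without proof as an immediate consequence of Proposition \ref{Prop:SLInt} (together with Theorem \ref{Th:SecSL}, Lemma \ref{Lem:LZ} and Remark \ref{Uni}), and your write-up simply fills in those same steps — involutivity of $\mathcal{D}^L$, uniqueness of its integral leaves, holonomicity via Remark \ref{Uni}, and the total-derivative argument identifying the solution sets of $\mathcal{S}^1_L$ and $\mathcal{S}^n_L$.
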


\section{Generalising a standard assumption in the theory of conditional symmetries}\label{Sec:NPDEL}

The Clairin theory of conditional symmetries  \cite{Cl03} mainly focuses on the case where $L$ is a $p$-dimensional Lie algebra of conditional symmetries admitting a basis of a particular type \cite{Cl03,GMR97,GR95}. As far as we know, no work deals with more general types of $L$. This section is aimed at characterising this type of Lie algebra. We will also define a larger family of Lie algebras whose properties will make them more useful to study Lie algebras of conditional symmetries, as will be seen in the following sections.

\begin{prop}\label{LemBas} A Lie algebra $L$ of vector fields on $X\times U$ admits a basis of the form  (\ref{B}) if and only if $L$ is  a $p$-dimensional Abelian Lie algebra and the distribution $\mathcal{D}^L$ generated by the elements of $L$ projects onto $TX$ via $\pi^0_{-1}$.
\end{prop}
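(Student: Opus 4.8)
The plan is to prove both implications directly, exploiting the characterisation of bases of the form (\ref{B}) as precisely those consisting of $p$ vector fields whose $x$-components form the identity matrix.

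\textbf{The reverse implication.} Suppose $L$ is a $p$-dimensional Abelian Lie algebra and $\mathcal{D}^L$ projects onto $TX$ under $\pi^0_{-1}$. Since $\dim L=p$ and $\mathcal{D}^L$ projects onto the $p$-dimensional $TX$, the projection $T\pi^0_{-1}$ restricted to $\mathcal{D}^L$ is a fibrewise isomorphism; in particular the $p$ elements of any basis $Y_1,\dots,Y_p$ of $L$ have $x$-components $\xi^i_j$ forming an invertible $p\times p$ matrix $\xi$ at each point. Passing to the new basis $Z_j=\sum_{k=1}^p(\xi^{-1})^k_jY_k$, which is still a basis of $L$ since $\xi^{-1}$ has entries in $C^\infty(X\times U)$, a short computation shows that the $x$-component of $Z_j$ is $\partial/\partial x^j$, so each $Z_j$ has the form appearing in (\ref{B}). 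It remains to check that $Z_1,\dots,Z_p$ really span $L$ as a linear space — but here I would invoke that $L$ being a Lie algebra means it is closed under the Lie bracket, and since $[Z_j,Z_k]$ projects to $[\partial/\partial x^j,\partial/\partial x^k]=0$ it has no $x$-component; were $L$ not already spanned by the $Z_j$, augmenting would exceed dimension $p$. Actually the cleanest route: the $Z_j$ are $p$ linearly independent elements (their images under $T\pi^0_{-1}$ are the coordinate fields, hence independent) of the $p$-dimensional space $L$, so they form a basis. Abelianness of $L$ is then automatic for this basis but is not needed for this direction; conversely it will be forced in the other direction.

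\textbf{The forward implication.} Suppose $L$ admits a basis $Z_1,\dots,Z_r$ of the form (\ref{B}), i.e. $Z_j=\partial/\partial x^j+\sum_{\alpha}\phi^\alpha_j\,\partial/\partial u^\alpha$ for $j=1,\dots,r$. First, $r=p$: the number of such vector fields is the number of $x$-coordinates, $p$, by the very form of (\ref{B}); alternatively, the $x$-components $\partial/\partial x^1,\dots,\partial/\partial x^r$ are independent, forcing $r\le p$, and a basis of (\ref{B})-type by definition ranges $j$ over $1,\dots,p$. Next, $\mathcal{D}^L$ projects onto $TX$: applying $T\pi^0_{-1}$ to $Z_j$ yields $\partial/\partial x^j$, and these span $T_xX$ at every point, so $T\pi^0_{-1}(\mathcal{D}^L_{(x,u)})=T_xX$. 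Finally, $L$ is Abelian: since $L$ is a Lie algebra, $[Z_j,Z_k]\in L$, but $[Z_j,Z_k]=\sum_{\alpha}(Z_j\phi^\alpha_k-Z_k\phi^\alpha_j)\partial/\partial u^\alpha$ has vanishing $x$-component, hence lies in the kernel of $T\pi^0_{-1}|_{\mathcal{D}^L}$; as that map is injective (its image $TX$ has the full dimension $p=\dim\mathcal{D}^L$), we get $[Z_j,Z_k]=0$ for all $j,k$. This is essentially the argument of Lemma \ref{Lem:InvAbe}, which I would cite.

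\textbf{Anticipated obstacle.} The only delicate point is the bookkeeping around dimensions and the transition between ``$L$ as a linear space'' and ``$L$ as a Lie algebra'': one must be careful that the change of basis $\xi^{-1}$ is genuinely smooth and globally defined (which follows from $\mathcal{D}^L$ being a regular rank-$p$ distribution with surjective projection, so $\det\xi\ne 0$ everywhere and $\xi^{-1}$ is smooth), and that a (\ref{B})-type basis forces $\dim L=p$ rather than merely $\le p$. Both are routine once the surjectivity of $T\pi^0_{-1}|_{\mathcal{D}^L}$ is recognised as a fibrewise linear isomorphism; I expect no substantive difficulty beyond this.
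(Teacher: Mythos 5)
Your forward implication is fine and is essentially the paper's: the projections of a basis of the form (\ref{B}) are $\partial/\partial x^{j}$, and the bracket argument (your appeal to the injectivity of $\pi^0_{-1*}$ on $\mathcal{D}^L$, i.e.\ Lemma \ref{Lem:InvAbe}) gives Abelianness and $\dim L=p$. The reverse implication, however, has a genuine gap. The step ``passing to the new basis $Z_j=\sum_{k}(\xi^{-1})^k_jY_k$, which is still a basis of $L$ since $\xi^{-1}$ has entries in $C^\infty(X\times U)$'' is false in general: $L$ is a finite-dimensional \emph{real} vector space of vector fields, closed only under constant-coefficient linear combinations, whereas the entries of $\xi^{-1}$ are functions of $(x,u)$. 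The fields $Z_j$ span the same distribution $\mathcal{D}^L$, but they need not lie in $L$; what you have built is exactly the associated rectified PDE Lie algebra $L'$ of Lemma \ref{Renormalization}, which in general differs from $L$, so you have not produced a basis \emph{of $L$} of the form (\ref{B}). A minimal example: $X=\mathbb{R}$, $L=\langle e^{x}\,\partial/\partial x\rangle$ (trivial $u$-component). Here $\xi=(e^{x})$ and your $Z_1=\partial/\partial x\notin L$, since $L$ consists only of constant multiples of $e^{x}\,\partial/\partial x$; a basis of the form (\ref{B}) appears only after the change of coordinate $y=-e^{-x}$ on $X$. This coordinate change is precisely what the paper's proof supplies and your argument omits: Abelianness makes the projections $\pi^0_{-1*}X_j$ a commuting frame spanning $TX$, which is simultaneously rectified (Fr\"obenius/flow-box) to coordinates in which the basis of $L$ itself takes the form (\ref{B}).

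Relatedly, your remark that Abelianness ``is not needed for this direction'' cannot stand: take $L=\langle \partial/\partial x^1,\; x^1\partial/\partial x^1+\partial/\partial x^2\rangle$ on $\mathbb{R}^2\times U$. It is $2$-dimensional and $\mathcal{D}^L$ projects onto $TX$, yet $L$ is not Abelian, so by your own (correct) forward implication it admits no basis of the form (\ref{B}); if your reverse construction were valid it would apply verbatim to this $L$ and contradict that. So the converse must genuinely use Abelianness (to rectify the commuting projections) together with a change of coordinates on $X$; the fixed-coordinate manoeuvre with the function-valued matrix $\xi^{-1}$ only proves the weaker statement that $L$ is rectifiable, not that $L$ itself has a basis of the form (\ref{B}).
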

\begin{proof} Let us prove the direct part of the proposition.  If $L$ admits a basis $Z_1,\ldots,Z_p$ given by (\ref{B}), then $Z_1,\ldots,Z_p$, and therefore all the elements of $L$, are projectable onto $X$. The projections of $Z_1,\ldots,Z_p$ are $\partial/\partial x^k$, with $k=1,\ldots,p$, and therefore $\mathcal{D}^L$ projects onto $TX$ via $\pi^0_{-1}$. 
	
        Let us now prove the converse.  If $L$ is a Lie algebra of vector fields projectable onto $X$, then the elements of a basis $X_1,\ldots,X_p$ of $L$ are also projectable. Since $L$ is Abelian, the projections span an Abelian Lie algebra. Since $\mathcal{D}^L$ projects onto $TX$ under $\pi^0_{-1}$, the projections $\pi^0_{-1*}X_j$, with $j=1,\ldots,p$,  generate the distribution $TX$ and commute among themselves. Fr\"obenius' Theorem ensures that there exists a coordinate system $x^1,\ldots,x^{p}$ on $X$ such that 
        $$
        \pi^0_{-1*}X_j=\frac{\partial}{\partial x^j},\qquad j=1,\ldots,p.
        $$
        Adding to the coordinates $x^1,\ldots,x^p$ a new set of coordinates to form a coordinate system on $X\times U$, we obtain that the vector fields $X_1,\ldots,X_p$ can be written in the form 
        $$
        X_j=\frac{\partial}{\partial x^j}+\sum_{\alpha=1}^{q}\phi^\alpha_j\frac{\partial}{\partial u^\alpha},\qquad j=1,\ldots,p,
        $$
for certain functions $\phi^\alpha_j\in C^\infty(J^0)$, with $\alpha=1,\ldots,q$ and $j=1,\ldots,p$. This finishes the converse part of our proposition. 
\end{proof}

Due to their appearance in the literature (cf. \cite{GMR97,GR95,Ol92}) and in this work, the Lie algebras studied in the above proposition deserve a special name.

\begin{defn} A {\it rectified PDE Lie algebra} is a $p$-dimensional Abelian Lie algebra of vector fields on $X\times U$ whose projections onto $X$ span $TX$.
\end{defn}

 We aim to show in the following sections that one can significantly enlarge the Clairin theory of conditional symmetries by considering Lie algebras $L$ of conditional symmetries  giving a section $\mathcal{S}^n_L$ of $\pi^n_0$  such that there exists a rectified PDE Lie algebra $L'$ satisfying  $\mathcal{S}^n_L=\mathcal{S}^n_{L'}$. The following lemma is key to accomplishing this goal. 

\begin{lem}\label{Renormalization} Let $L$ be a linear space of vector fields on $X\times U$ whose $\mathcal{S}^n_L$ is an integrable section of $\pi^n_0$ and such that the distribution $\mathcal{D}^L$ has rank $p$ and projects onto $TX$ under $\pi^0_{-1}$. Then, there exists a rectified PDE Lie algebra $L'$ such that $\mathcal{S}^n_L=\mathcal{S}^n_{L'}$.
        \end{lem}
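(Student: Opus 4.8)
The plan is to construct $L'$ by hand: take the first-order system of PDEs in normal form that $\mathcal{S}^1_L$ determines, let $L'$ be the span of the associated total-derivative-type vector fields $Z_1,\ldots,Z_p$, and then verify that $L'$ is a rectified PDE Lie algebra sharing the characteristic system with $L$, by assembling results already available in the paper.

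First I would observe that, by hypothesis, $\mathcal{D}^L$ has rank $p$ and projects onto $TX$ under $\pi^0_{-1}$, so Theorem \ref{Th:SecSL1} ensures that $\mathcal{S}^1_L$ is a section of $\pi^1_0$, that is, a first-order system of PDEs in normal form (\ref{NormalSys}); moreover, since $\mathcal{S}^n_L$ is a section of $\pi^n_0$ its projection to $J^1$ is precisely $\mathcal{S}^1_L$, so this normal form is globally defined on $X\times U$. Lemma \ref{Lem:LZ} then gives that the vector fields $Z_1,\ldots,Z_p$ of the form (\ref{B}) attached to (\ref{NormalSys}) span exactly $\mathcal{D}^L$. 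I would set $L':=\langle Z_1,\ldots,Z_p\rangle$. Since the $Z_j$ project onto $\partial/\partial x^1,\ldots,\partial/\partial x^p$, which are pointwise linearly independent, the $Z_j$ are linearly independent, so $\dim L'=p$ and $\mathcal{D}^{L'}=\mathcal{D}^L$.

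Next I would bring in integrability. Since $\mathcal{S}^n_L$ is by assumption an integrable section of $\pi^n_0$, Proposition \ref{Prop:SLInt} gives that $\mathcal{D}^L$ is involutive. Hence $Z_1,\ldots,Z_p$ span an involutive distribution, and Lemma \ref{Lem:InvAbe} shows that $L'$ is an Abelian Lie algebra. Combined with $\dim L'=p$ and the fact that $\mathcal{D}^{L'}=\mathcal{D}^L$ projects onto $TX$, this says exactly that $L'$ is a rectified PDE Lie algebra. Finally, since $\mathcal{D}^{L'}=\mathcal{D}^L$ is involutive of rank $p$ and projects onto $TX$, Theorem \ref{Th:SecSL1} (for $n=1$) and Theorem \ref{Th:SecSL} (for $n>1$) ensure that $\mathcal{S}^n_{L'}$ is itself a section of $\pi^n_0$; then, with both $\mathcal{S}^n_L$ and $\mathcal{S}^n_{L'}$ sections of $\pi^n_0$ and $\mathcal{D}^L=\mathcal{D}^{L'}$, Proposition \ref{Prop:Coi} yields $\mathcal{S}^n_L=\mathcal{S}^n_{L'}$, which completes the argument.

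The main obstacle — really the only non-formal point, since the rest is a chain of earlier statements — is checking that the hypotheses of Proposition \ref{Prop:Coi} are genuinely met for $L'$, namely that $\mathcal{S}^n_{L'}$ is a true section of $\pi^n_0$ and not merely a subset manufactured from the correct distribution; this is exactly where the involutivity of $\mathcal{D}^L$, extracted via Proposition \ref{Prop:SLInt} from the local solvability of $\mathcal{S}^n_L$, is used essentially (for $n>1$, through Theorem \ref{Th:SecSL}). A secondary point worth stating explicitly is that all the objects involved are globally defined, which follows because $\mathcal{S}^n_L$, being a section, projects onto the global section $\mathcal{S}^1_L$ of $\pi^1_0$, hence onto a globally defined normal form (\ref{NormalSys}) and globally defined $Z_1,\ldots,Z_p$.
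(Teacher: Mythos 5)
Your proposal is correct and follows essentially the same route as the paper: the paper builds $L'$ by applying $\xi^{-1}$ to a family $Y_1,\ldots,Y_p\in L$ spanning $\mathcal{D}^L$ (which produces exactly the vector fields (\ref{B}) you obtain via Lemma \ref{Lem:LZ}), then invokes Lemma \ref{Lem:InvAbe} and Proposition \ref{Prop:Coi} just as you do. Your version is merely a little more explicit than the paper's about where involutivity enters (via Proposition \ref{Prop:SLInt} from the integrability hypothesis) and about checking that $\mathcal{S}^n_{L'}$ is itself a section before applying Proposition \ref{Prop:Coi}, which are points the paper leaves implicit.
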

\begin{proof} Under the considered assumptions on $\mathcal{D}^L$, one has that $L$ admits a family $Y_1,\ldots, Y_p$ of elements spanning $\mathcal{D}^L$ such that $Y_1\wedge \ldots \wedge Y_p\neq 0$. Assume that the vector fields $Y_1,\ldots, Y_p$ take the form (\ref{basisL}). Since $\mathcal{D}^L$ projects onto $TX$  relative to $\pi^0_{-1}$, one has that the functions $\xi^k_j$ for $j,k=1,\ldots,p$ are the coefficients of an invertible $p\times p$ matrix $\xi$. Since $\xi$ is invertible, one can define a new family of linearly independent vector fields
        \begin{equation}\label{renor}
        Z_k=\sum_{j=1}^p(\xi^{-1})^j_kY_j=\frac{\partial}{\partial x^k}+\sum_{j=1}^p\sum_{\alpha=1}^q(\xi^{-1})_k^j\varphi^\alpha_j\frac{\partial}{\partial u^\alpha},\quad k=1,\ldots,p.
        \end{equation}
        Since $\mathcal{D}^L$ coincides with the distribution spanned by the elements $L'=\langle Z_1,\ldots Z_p\rangle $, Lemma \ref{Lem:InvAbe} shows that $L'$ is an involutive Lie algebra and $L'$ becomes a rectified PDE Lie algebra. Since $\mathcal{D}^L=\mathcal{D}^{L'}$,  Proposition \ref{Prop:Coi} ensures that $\mathcal{S}^n_L=\mathcal{S}^n_{L'}$. 
        \end{proof}

Lemma \ref{Renormalization} and other results obtained in the following sections will allow us to extend findings concerning rectified PDE Lie algebras $L'$ of conditional symmetries to more general Lie algebras $L$.  This justifies the introduction of the following definition.

\begin{defn}\label{Def:RLS} A {\it rectifiable linear space of vector fields} $L$ is a linear space of
vector fields on $X\times U$ whose characteristic system is equal to the characteristic system of a rectified PDE Lie algebra $L'$. Then, $L'$ is called an {\it associated rectified PDE Lie algebra} of $L$. 
\end{defn}

\begin{ex} Assume that $X=\mathbb{R}^2$ and $U=\mathbb{R}$. Define $L=\langle e^{-t}\partial_t+e^{-x}\partial_x+2\partial_u,e^{-t}\partial_t+\partial_u\rangle$. A short calculation shows that $\mathcal{S}_L=\{(x,u,u_x): 1-u_te^{-t}=0,2-u_te^{-t}-u_xe^{-x}=0\}$. But then, $\mathcal{S}_L=\{(x,u,u_x):u_x=e^x,u_t=e^t\}$. Thus $\mathcal{S}_{L'}=\mathcal{S}_L$ for $L'=\langle \partial_t+e^t\partial_u,\partial_x+e^x\partial_u\rangle$, which is a rectified PDE Lie algebra associated with $L$. Then, $L$ is a rectifiable linear space of vector fields.	
\end{ex}

The following proposition determines straightforwardly, in terms of the distribution $\mathcal{D}^L$, when $L$ is a rectifiable linear space of vector fields.

\begin{prop} A linear space of vector fields $L$ is rectifiable if and only if the distribution $\mathcal{D}^L$ is involutive of rank $p$ and its projection, via $\pi^0_{-1}$, is $TX$.
        \end{prop}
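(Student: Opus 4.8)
The plan is to prove both implications using the machinery already assembled in the excerpt, essentially reducing the statement to Proposition \ref{Prop:Coi}, Lemma \ref{Lem:InvAbe}, Lemma \ref{Renormalization}, Theorems \ref{Th:SecSL1}--\ref{Th:SecSL} and Proposition \ref{Prop:SLInt}.

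First I would prove the converse direction: assume $\mathcal{D}^L$ is involutive, of rank $p$, with $\pi^0_{-1*}\mathcal{D}^L = TX$. By Theorems \ref{Th:SecSL1} and \ref{Th:SecSL}, $\mathcal{S}^n_L$ is a section of $\pi^n_0$, and by Proposition \ref{Prop:SLInt} it is locally solvable, hence an integrable section of $\pi^n_0$. Now Lemma \ref{Renormalization} applies verbatim — its hypotheses are exactly that $\mathcal{S}^n_L$ is an integrable section of $\pi^n_0$ and that $\mathcal{D}^L$ has rank $p$ and projects onto $TX$ — so there exists a rectified PDE Lie algebra $L'$ with $\mathcal{S}^n_L = \mathcal{S}^n_{L'}$. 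By Definition \ref{Def:RLS} this says precisely that $L$ is rectifiable.

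Then I would prove the direct part: suppose $L$ is rectifiable, so there is a rectified PDE Lie algebra $L'$ with $\mathcal{S}^n_L = \mathcal{S}^n_{L'}$. By definition of rectified PDE Lie algebra, $L'$ is a $p$-dimensional Abelian Lie algebra of vector fields on $X\times U$ whose projections onto $X$ span $TX$; by Proposition \ref{LemBas} it admits a basis of the form (\ref{B}). Hence $\mathcal{D}^{L'}$ has rank $p$, is involutive (an Abelian Lie algebra spans an involutive distribution, cf.\ the remark after Corollary \ref{LTan}), and projects onto $TX$; in particular Theorems \ref{Th:SecSL1}--\ref{Th:SecSL} guarantee that $\mathcal{S}^n_{L'}$, and therefore $\mathcal{S}^n_L$, is a section of $\pi^n_0$. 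Since both $\mathcal{S}^n_L$ and $\mathcal{S}^n_{L'}$ are sections of $\pi^n_0$ and are equal, Proposition \ref{Prop:Coi} yields $\mathcal{D}^L = \mathcal{D}^{L'}$. Consequently $\mathcal{D}^L$ inherits from $\mathcal{D}^{L'}$ the properties of being involutive, of rank $p$, and of projecting onto $TX$ under $\pi^0_{-1}$.

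The only mild subtlety — and the place I would be most careful — is matching hypotheses exactly when invoking Proposition \ref{Prop:Coi} and Lemma \ref{Renormalization}: Proposition \ref{Prop:Coi} requires both characteristic systems to be sections of $\pi^n_0$, so in the direct part I must first establish that $\mathcal{S}^n_{L'}$ is a section (via Proposition \ref{LemBas} plus Theorems \ref{Th:SecSL1}--\ref{Th:SecSL}) before concluding $\mathcal{D}^L=\mathcal{D}^{L'}$; and in the converse part I must verify that ``integrable section'' in Lemma \ref{Renormalization} is supplied by Proposition \ref{Prop:SLInt}. No genuinely new computation is needed — the proposition is a clean repackaging of the section's results — so there is no real obstacle beyond this bookkeeping. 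One remark worth inserting: the equivalence shows that rectifiability is an intrinsic property of the distribution $\mathcal{D}^L$ and does not depend on $n$.
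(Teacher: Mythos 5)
Your proof is correct, and the forward implication is essentially the paper's: rectifiability gives a rectified $L'$ with $\mathcal{S}^n_L=\mathcal{S}^n_{L'}$, Proposition \ref{LemBas} gives the properties of $\mathcal{D}^{L'}$, and Proposition \ref{Prop:Coi} transfers them to $\mathcal{D}^L$ (you are right, and more careful than the paper, to first check that both characteristic systems are sections before invoking Proposition \ref{Prop:Coi}). For the converse you take a slightly different route: you pass through Theorems \ref{Th:SecSL1}--\ref{Th:SecSL} and Proposition \ref{Prop:SLInt} to verify that $\mathcal{S}^n_L$ is an integrable section and then quote Lemma \ref{Renormalization}, whereas the paper argues directly on the distribution, picking $p$ vector fields $Z_k=\partial/\partial x^k+\sum_\alpha\varphi^\alpha_k\,\partial/\partial u^\alpha$ in $\mathcal{D}^L$ (possible since $\mathcal{D}^L$ has rank $p$ and projects onto $TX$) and using Lemma \ref{Lem:InvAbe} to see that they span a rectified PDE Lie algebra, with no appeal to sections or local solvability. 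The two routes rest on the same construction (Lemma \ref{Renormalization} is precisely that construction packaged with $\xi^{-1}$), so yours is not wrong, just heavier in hypotheses; on the other hand it delivers explicitly the equality $\mathcal{S}^n_L=\mathcal{S}^n_{L'}$ required by Definition \ref{Def:RLS}, a step the paper's converse leaves implicit (it ends with $\mathcal{D}^L=\mathcal{D}^{L'}$ and tacitly uses Proposition \ref{Prop:Coi} to conclude). Your closing remark that rectifiability is a property of $\mathcal{D}^L$ alone, independent of $n$, is accurate and consistent with the paper's statement.
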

\begin{proof}  Assume that $L$ is rectifiable. By Definition \ref{Def:RLS} and Proposition \ref{Prop:Coi}, one has that $L$ and the rectified PDE Lie algebra $L'$ span the same distribution $\mathcal{D}$.  In view of Proposition \ref{LemBas}, the distribution $\mathcal{D}$ is $p$-dimensional and projects onto $TX$ via $\pi^0_{-1}$. 
	
	   Let us prove the inverse. If $\mathcal{D}^L$ projects onto a distribution $TX$, then there must exist $p$ vectors on $T_{(x,u)}(X\times U)$, for arbitrary $(x,u)\in X\times U$, projecting onto $\partial/\partial x^k$ for $k=1,\ldots,p$, respectively. Therefore, $\mathcal{D}^L$ admits $p$ vector fields of the form
        $$
        Z_k=\frac{\partial}{\partial x^k}+\sum_{\alpha=1}^q\varphi^\alpha_k(x,u)\frac{\partial}{\partial u^\alpha},\qquad k=1,\ldots,p.
        $$
        Since $\mathcal{D}^L$ has rank $p$, the elements of  $L'=\langle Z_1,\ldots, Z_p\rangle$ generate the distribution $\mathcal{D}^L$ and, since $\mathcal{D}^L$ is involutive, one has in view of Lemma \ref{Lem:InvAbe} that $L'$ is $p$-dimensional and Abelian. Consequently, $L$ is rectifiable. 
        \end{proof}

The following corollary is an immediate consequence of previous results that will be useful in the remaining sections of our work.
\begin{cor}\label{NorRel} If $L$ is a rectifiable family of vector fields, then ${\mathcal{S}^n_L}$ is an integral section of $\pi^n_0$, the prolongations of the elements of $L$ to $J^n$ span the tangent space to $\mathcal{S}^n_L$, and $\mathcal{S}^n_L$ is a locally solvable differential equation.
 \end{cor}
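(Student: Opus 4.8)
The plan is to deduce all three assertions of the corollary from the structural results already proved, feeding them the single piece of information extracted from the proposition just above: since $L$ is rectifiable, the distribution $\mathcal{D}^L$ spanned by its elements is involutive, has rank $p$, and projects onto $TX$ via $\pi^0_{-1}$.

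First I would establish that $\mathcal{S}^n_L$ is a section of $\pi^n_0$. For $n=1$ this is the converse part of Theorem~\ref{Th:SecSL1} applied to the data just recalled; for $n>1$ it is the converse part of Theorem~\ref{Th:SecSL}, which in addition requires $\mathcal{D}^L$ to be involutive, and that is exactly what rectifiability provides. To upgrade a mere section to an integral section I would then invoke Proposition~\ref{Prop:SLInt}: because $\mathcal{S}^n_L$ is a section of $\pi^n_0$ and $\mathcal{D}^L$ is involutive, the associated $n$-th order system of PDEs in normal form is locally solvable, which by the definition of an integrable section finishes this point and also settles the last assertion of the corollary at once.

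For the statement on prolongations I would use Theorem~\ref{Th:ProlSL}: $\mathcal{S}^n_L$ is a section of $\pi^n_0$ and $\mathcal{D}^L$ is involutive, so the prolongations to $J^n$ of the elements of $L$ are tangent to $\mathcal{S}^n_L$. To describe how these prolongations sit inside $T\mathcal{S}^n_L$ I would bring in Lemma~\ref{CharacLin}: at the points of $\mathcal{S}^n_L$ the map $Y\mapsto {\bf j}^nY$ is $C^\infty(X\times U)$-linear, so the prolonged vector fields span there a distribution of the same rank $p$ as $\mathcal{D}^L$; since this distribution is tangent to $\mathcal{S}^n_L$ and projects isomorphically onto $\mathcal{D}^L$, hence onto $TX$, its integral leaves are holonomic sections of $\pi^n_0$ contained in $\mathcal{S}^n_L$, which are precisely the prolonged solutions used in the proof of Proposition~\ref{Prop:SLInt}.

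I do not expect a genuine obstacle: the corollary is essentially bookkeeping on top of Theorems~\ref{Th:SecSL1}, \ref{Th:SecSL} and \ref{Th:ProlSL}, Proposition~\ref{Prop:SLInt} and Lemma~\ref{CharacLin}. The only point needing care is the phrase that the prolongations span the tangent space to $\mathcal{S}^n_L$: since $\dim\mathcal{S}^n_L=p+q$ while the prolonged distribution has rank $p$, this cannot mean the full tangent bundle, and I would state the conclusion as saying that the prolongations of the elements of $L$ span, at each point of $\mathcal{S}^n_L$, the rank-$p$ involutive Cartan-type distribution tangent to $\mathcal{S}^n_L$ whose leaves are the prolongations of the solutions, which is what the argument actually delivers.
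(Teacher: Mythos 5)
Your argument is correct and matches the paper's intent exactly: the paper offers no separate proof, presenting the corollary as an immediate consequence of the characterisation of rectifiable linear spaces (involutive $\mathcal{D}^L$ of rank $p$ projecting onto $TX$) combined with Theorems \ref{Th:SecSL1}, \ref{Th:SecSL}, \ref{Th:ProlSL} and Proposition \ref{Prop:SLInt}, which is precisely the chain you assemble. Your caveat about the phrase ``span the tangent space'' is also well taken: by Lemma \ref{CharacLin} the prolongations only span a rank-$p$ involutive distribution tangent to $\mathcal{S}^n_L$, whose leaves are the prolonged solutions, so the conclusion should indeed be read in that weaker sense rather than as the full $(p+q)$-dimensional tangent space.
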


\section{Conditional symmetries}\label{Sec:CS}

This section introduces the theory of conditional symmetries with special emphasis on the Clairin approach. Next,
we use the properties of the submanifolds ${\mathcal{S}^n_L}$ obtained in previous sections to investigate the Lie algebras of conditional symmetries of a system of PDEs $\mathcal{S}_\Delta$. Recall that our aim is to restrict the analysis of the Lie algebras of Lie symmetries of  $\mathcal{S}_\Delta$ to determining the Lie algebras of symmetries of a system of PDEs described by ${\mathcal{S}^n_L}\cap \mathcal{S}_\Delta$ where $\mathcal{S}^n_L$ must be an integrable section of $\pi^n_0$, namely a locally solvable $n$-th order system of PDEs. 

In general, ${\mathcal{S}^n_L}\cap \mathcal{S}_\Delta$ does not need to be a manifold and it can even be empty, e.g. the problem on $X=\mathbb{R}^2$, $U=\mathbb{R}$ given by 
$$
\Delta^1=u_{x},\quad \Delta^2=u_{t},\qquad L=\langle \partial_x+\partial_u,\partial_t+u
\partial_u\rangle,
$$
gives rise to an empty set $\mathcal{S}^1_L\cap \mathcal{S}_\Delta$. Hence, the corresponding system of PDEs has no solutions. 
This problem is not commented in applications, which are indeed focused on those cases where $\mathcal{S}^n_L\cap \mathcal{S}_\Delta$ is a manifold and admits particular solutions of physical relevance \cite{CD13,Cl03,GR95,GMR97}. 

We hereafter assume that $\mathcal{S}^n_L$ and $\mathcal{S}_\Delta$ are {\it non-vacuously transversal}, i.e. $\mathcal{S}^n_L\cap \mathcal{S}_\Delta$ is not empty and at every point ${\bf j}^n_x\sigma\in \mathcal{S}^n_L\cap \mathcal{S}_\Delta$ one has that  $T_{{\bf j}^n_x\sigma}\mathcal{S}^n_L+T_{{\bf j}^n_x\sigma}\mathcal{S}_\Delta=T_{{\bf j}^n_x\sigma}J^n$. This turns $\mathcal{S}^n_L\cap \mathcal{S}_\Delta$ into a submanifold of $J^n$ \cite{AMR88}. As illustrated by examples in this work and other previous ones \cite{GMR97,GR95}, this assumption seems to be reasonable. In particular, if $\Delta:J^n\rightarrow \mathbb{R}^s$, then $\dim \mathcal{S}_\Delta=\dim J^n-s$ and $\dim \mathcal{S}^n_L=q$ with $s\leq q$ for the cases of interest in the Clairin theory of conditional symmetries.

The following notion of a Lie algebra of conditional symmetries represents an intrinsic formulation of the definition given in  \cite[Definition 2.2]{GR95}. 

\begin{defn}\label{CLS} A {\it Lie algebra of conditional symmetries} of an $n$-th order system of PDEs  $\mathcal{S}_\Delta\subset J^n$ is a Lie algebra $L$ of vector fields on $X\times U$ such that ${\bf j}^nZ$ is tangent to $\mathcal{S}_\Delta\cap \mathcal{S}^n_L$ for every $Z\in L$.
\end{defn}

As we did not assume that $\mathcal{D}^L$ projects onto $TX$ or its rank is $p$, the previous definition does not need to give rise to a section $\mathcal{S}^n_L$. As a consequence, this definition is more general than that of the Clairin approach and it covers more general Lie algebras of conditional symmetries \cite{GMR97,GR95,Zi93}. Relevantly, our definition is purely geometrical and it does not rely on coordinates expressions of characteristics or the basis of $L$ \cite{Zi93}.

Let us comment on the definition of a Lie algebra of conditional symmetries and other related notions.

We say that a vector field $Y$ is a {\it conditional Lie symmetry} of the system of PDEs given by $\mathcal{S}_\Delta$ if $L=\langle Y\rangle $ is a Lie algebra of conditional symmetries of $\mathcal{S}_\Delta$. This amounts to saying that $Y$ is a {\it nonclassical infinitesimal symmetry} in the terminology employed in \cite{OV95}.

Note that if $Y=\sum_{i=1}^p\xi^i\partial/\partial x^i+\sum_{\alpha=1}^q\varphi^\alpha\partial/\partial u^\alpha$ on $X\times N$, 
then
$$
({\bf j}^n\,Y)\Delta^\mu=0,\qquad \mu=1,\ldots,s,
$$
and 
Corollary \ref{LTan} ensures that ${\bf j}^n\,Y$ is also  tangent to $\mathcal{S}^n_L$. Consequently, ${\bf j}^n\,Y$ is tangent to $\mathcal{S}^n_L\cap \mathcal{S}_\Delta$ and $Y$ becomes a conditional Lie symmetry of $\mathcal{S}_\Delta$. 
Therefore, Lie point symmetries of $\mathcal{S}_\Delta$ induce a conditional Lie symmetry. Nevertheless, a conditional Lie symmetry does not necessarily give rise to a Lie point symmetry. In particular, there exist systems of PDEs that do not admit any Lie point symmetry while admitting conditional Lie symmetries (cf. \cite{MR03} in references therein).

Many Lie algebras of conditional symmetries in the literature \cite{GMR97,GR95} admit a basis of the form (\ref{B}). Nevertheless, we want to study the theory of Lie algebras of conditional symmetries given by   rectifiable PDE Lie algebras of vector fields. To start with, the following theorem shows that a rectifiable PDE Lie algebra of vector fields consists of conditional Lie symmetries if and only if an associated rectified PDE Lie algebra is a Lie algebra of conditional Lie symmetries.

\begin{thm}\label{Th:Symp} A rectifiable PDE Lie algebra of vector fields $L$ is a Lie algebra of conditional symmetries of $\mathcal{S}_\Delta$ if and only if an associated rectified PDE Lie algebra of vector fields $L'$ consists of conditional symmetries.
        \end{thm}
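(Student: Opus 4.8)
The plan is to exploit the already-established fact (Proposition \ref{Prop:Coi}) that if $L$ is rectifiable with an associated rectified PDE Lie algebra $L'$, then $\mathcal{S}^n_L = \mathcal{S}^n_{L'}$, so that the two sides of the equivalence refer to tangency conditions over the \emph{same} submanifold $\mathcal{S}^n_L \cap \mathcal{S}_\Delta = \mathcal{S}^n_{L'}\cap\mathcal{S}_\Delta$ of $J^n$. The remaining content is therefore a statement about the relationship between the prolongations $\{{\bf j}^nY : Y\in L\}$ and $\{{\bf j}^nY' : Y'\in L'\}$ as collections of vector fields \emph{restricted to} this submanifold. First I would record that since $L$ is rectifiable, Corollary \ref{NorRel} (or Theorems \ref{Th:SecSL1}--\ref{Th:SecSL}) guarantees that $\mathcal{S}^n_L$ is an integrable section of $\pi^n_0$, that $\mathcal{D}^L = \mathcal{D}^{L'}$ has rank $p$ and projects onto $TX$, and that the prolongations of elements of both $L$ and $L'$ are tangent to $\mathcal{S}^n_L$ (Theorem \ref{Th:ProlSL}, using involutivity of $\mathcal{D}^L$). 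Thus, on $\mathcal{S}^n_L$, tangency of a prolongation to $\mathcal{S}^n_L\cap\mathcal{S}_\Delta$ is equivalent to its tangency to $\mathcal{S}_\Delta$, i.e. to the classical symmetry criterion ${\bf j}^nY\,\Delta^\mu|_{\mathcal{S}_\Delta\cap\mathcal{S}^n_L}=0$.

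The key algebraic step is Lemma \ref{CharacLin}: at points of $\mathcal{S}^n_L$ one has ${\bf j}^n\big(\sum_j f^j Y_j\big) = \sum_j f^j\,{\bf j}^n Y_j$ for $f^j\in C^\infty(X\times U)$. Since $\mathcal{D}^L = \mathcal{D}^{L'}$ has rank $p$ and projects onto $TX$, there is an invertible $p\times p$ matrix $\xi$ of functions on $J^0$ carrying a spanning family $Y_1,\dots,Y_p$ of $L$ to a spanning family $Z_1,\dots,Z_p$ of the rectified algebra $L'$ (this is exactly the construction in the proof of Lemma \ref{Renormalization}). Applying Lemma \ref{CharacLin} on $\mathcal{S}^n_L$, each ${\bf j}^nZ_k = \sum_j (\xi^{-1})^j_k\,{\bf j}^nY_j$ restricted to $\mathcal{S}^n_L$, and conversely ${\bf j}^nY_j = \sum_k \xi^k_j\,{\bf j}^nZ_k$ there. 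Hence the $C^\infty(\mathcal{S}^n_L)$-module generated by $\{{\bf j}^nY|_{\mathcal{S}^n_L} : Y\in L\}$ coincides with that generated by $\{{\bf j}^nY'|_{\mathcal{S}^n_L} : Y'\in L'\}$, because a general $Y\in L$ is a $C^\infty(J^0)$-combination of $Y_1,\dots,Y_p$ modulo the kernel of the characteristic map, and likewise for $L'$. Therefore, for every $Z\in L$, ${\bf j}^nZ$ is tangent to $\mathcal{S}_\Delta\cap\mathcal{S}^n_L$ if and only if for every $Z'\in L'$, ${\bf j}^nZ'$ is tangent to $\mathcal{S}_\Delta\cap\mathcal{S}^n_{L'}=\mathcal{S}_\Delta\cap\mathcal{S}^n_L$; this is precisely the claimed equivalence once we note that $L'$ being ``a Lie algebra of conditional symmetries'' is the same as its elements being conditional symmetries, which is built into Definition \ref{CLS}.

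More carefully, to get the two implications: if $L$ is a Lie algebra of conditional symmetries of $\mathcal{S}_\Delta$, then ${\bf j}^nY_j$ is tangent to $\mathcal{S}_\Delta\cap\mathcal{S}^n_L$ for $j=1,\dots,p$; since on $\mathcal{S}^n_L$ each ${\bf j}^nZ_k$ is a $C^\infty$-combination of the ${\bf j}^nY_j$ (Lemma \ref{CharacLin}) and each ${\bf j}^nZ_k$ is independently tangent to $\mathcal{S}^n_L$ (Theorem \ref{Th:ProlSL}, since $\mathcal{D}^{L'}=\mathcal{D}^L$ is involutive), it follows that ${\bf j}^nZ_k$ is tangent to $\mathcal{S}_\Delta\cap\mathcal{S}^n_L$; as $Z_1,\dots,Z_p$ is a basis of $L'$, every element of $L'$ is a constant-coefficient, hence a fortiori $C^\infty$, combination, so its prolongation is also tangent. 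Conversely, if $L'$ consists of conditional symmetries, the same argument with $\xi$ replaced by $\xi^{-1}$ recovers tangency of ${\bf j}^nY_j$, and then, using that a general $Y\in L$ equals a $C^\infty(J^0)$-combination of $Y_1,\dots,Y_p$ on $\mathcal{S}^n_L$ together with Lemma \ref{CharacLin}, every ${\bf j}^nY$ with $Y\in L$ is tangent to $\mathcal{S}_\Delta\cap\mathcal{S}^n_L$, so $L$ is a Lie algebra of conditional symmetries. The main obstacle I anticipate is a bookkeeping subtlety rather than a deep one: one must be careful that the identity ${\bf j}^n(\sum_j f^jY_j)=\sum_j f^j{\bf j}^nY_j$ holds only \emph{on} $\mathcal{S}^n_L$, not on all of $J^n$, so every statement about tangency must be phrased and verified as an identity of vector fields along $\mathcal{S}^n_L$ (equivalently, after restriction); and one must invoke that $\mathcal{S}^n_L$ is a section so that an arbitrary element of $L$, which need not lie in the span of $Y_1,\dots,Y_p$ as a $C^\infty(J^0)$-module globally, nonetheless has its characteristics — hence its prolongation restricted to $\mathcal{S}^n_L$ — expressible through those of $Y_1,\dots,Y_p$. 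Both points are handled cleanly by Lemma \ref{CharacLin} and Theorem \ref{Th:ProlSL}, so no genuinely new estimate is required.
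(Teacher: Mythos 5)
Your proposal is correct and follows essentially the same route as the paper: both reduce the claim to the equality $\mathcal{S}^n_L=\mathcal{S}^n_{L'}$, the coincidence $\mathcal{D}^L=\mathcal{D}^{L'}$ from Proposition \ref{Prop:Coi}, and the transfer of the symmetry condition between bases via the invertible matrix $\xi$ together with Lemma \ref{CharacLin} applied on $\mathcal{S}^n_L$. Your treatment of the converse (via $\xi^{-1}$) and of the tangency bookkeeping through Theorem \ref{Th:ProlSL} and Corollary \ref{NorRel} just makes explicit what the paper compresses into ``the converse is analogous.''
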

\begin{proof} Let us prove the direct part of the theorem. By definition, if $L$ is a rectifiable PDE Lie algebra of vector fields, then there exists a rectified PDE Lie algebra $L'$ such that $\mathcal{S}^n_L=\mathcal{S}^n_{L'}$. Let $Y_1,\ldots,Y_l$ be a basis of $L$ and let $Z_1,\ldots,Z_p$ be a basis of $L'$. Hence, to prove that $L$ is a Lie algebra of conditional Lie symmetries if and only if an associated rectifiable $L'$ is, it is enough to show  that 
        $$
        ({\bf j}^n\, Z_k)\Delta|_{\mathcal{S}^n_L\cap \mathcal{S}_\Delta}=0,\,\, k=1,\ldots,p\,\,\Longleftrightarrow  \,\, ({\bf j}^n\, Y_k)\Delta|_{\mathcal{S}^n_L\cap \mathcal{S}_\Delta} =0,\,\,
k=1,\ldots,l\,,
$$

                By virtue of Proposition  \ref{Prop:Coi}, since $L'$ is a rectified PDE Lie algebra associated with $L$, both spaces of vector fields span  the same distribution, namely $\mathcal{D}^L=\mathcal{D}^{L'}$ and  $Y_j=\sum_{k=1}^p\xi^k_jZ_k$ for $j=1,\ldots,l$ and some functions $\xi^k_j\in C^\infty(J^0)$ with $k=1,\ldots,p,j=1,\ldots,l$. Lemma \ref{CharacLin} shows that 
 $
{\bf j}^nY_j
=\sum_{k=1}^p\xi^k_j{\bf j}^nZ_k
$   on $\mathcal{S}_L^n$. Hence, the vector fields ${\bf j}^nY_j$ vanish on $\Delta^\mu$ over $\mathcal{S}^n_L\cap \mathcal{S}_\Delta$  if the $Z_1,\ldots,Z_p$ do. The converse is analogous.             
        \end{proof}

Consequently, the obtention of  a rectifiable PDE Lie algebra of conditional symmetries reduces to the study of rectified PDE Lie algebras of conditional symmetries. Conversely, the knowledge of a rectified PDE Lie algebra allows us to generate many other rectifiable families of vector fields.  This fact has been overlooked  in the literature \cite{GMR97,GR95,Ol92} and it will be demonstrated in Section \ref{Appl}, where we apply our techniques to physically motivated examples. In fact, this is a generalisation of the cases studied in the above-mentioned works. 

Since previous comments relate rectifiable PDE Lie algebras of conditional symmetries to the standard Clairin theory of conditional symmetries dealing with rectified PDE Lie algebras of conditional symmetries \cite{Cl03,GMR97,GR95}, the elements of such rectifiable PDE Lie algebras can be called {\it Clairin conditional symmetries}.

Let us now study the determination of rectified PDE Lie algebras of conditional symmetries. In this case, the system of PDEs associated with ${\mathcal{S}^n_L}$ is given, in an appropriate coordinate system where a basis of $L$ takes the form (\ref{B}), by
\begin{equation}\label{ConSymSys}
\frac{\partial u^\alpha}{\partial x^i}=\phi^\alpha_i(x,u),\qquad i=1,\ldots,p,\quad \alpha=1,\ldots,q,
\end{equation}
and all the total derivatives of this system with respect to the $D_K$ with $|K|\leq n-1$ can be determined. Recall that in view of Proposition \ref{ZCC-LS} the compatibility condition for (\ref{ConSymSys}), which in coordinates takes the form (\ref{DiffCon}), amounts to the fact that $L$ is Abelian. From a practical point of view, this condition and the fact that $\mathcal{S}^n_L$ is locally solvable (due to Corollary \ref{NorRel}) ensure that every solution of (\ref{ConSymSys}) gives rise to a solution of $\mathcal{S}^n_L$ and vice versa. For this reason, the space of solutions of (\ref{ConSymSys}) and the $n$-th order system of PDEs $\mathcal{S}^n_L$ is the same and one can restrict oneself to (\ref{ConSymSys}) in applications. This is the approach used in works on conditional symmetries \cite{Cl03,GMR97,GR95}. The solutions of the system $\mathcal{S}_\Delta\cap {\mathcal{S}^n_L}$ are therefore given by the following system of PDEs:
\begin{equation}\label{ConComplete}
\Delta^\mu(x^i,u^\alpha, u^\alpha_K)=0,\quad \frac{\partial u^\alpha}{\partial x^i}=\phi^\alpha_i(x,u),
\end{equation}
with $i=1,\ldots,p,\alpha=1,\ldots,q,\mu=1,\ldots,s$.

If $L$ is a Lie algebra of conditional symmetries of $\mathcal{S}_\Delta$, then one has the additional condition
$$
{\bf j}^nZ\Delta^\mu|_{\mathcal{S}_\Delta\cap \mathcal{S}_L}=0,\qquad \mu=1,\ldots,s,\qquad \forall Z\in L.
$$
The descriptions of these conditions in coordinates are frequently called the {\it invariant surface conditions} (see \cite{An96}).

Consequently, the obtention of a rectified PDE Lie algebra of conditional symmetries requires the solving of a nonlinear system of PDEs, which is in general much more complicated than the standard system of linear PDEs employed to obtain classical Lie point symmetries \cite{St89}. Nevertheless, recall that this method is applied when standard Lie point symmetries do not provide enough information about the system of PDEs under study.
Moreover, the conditional symmetry approach may offer Lie symmetries of particular families of solutions of the original system of PDEs, which may not be related to Lie symmetries of the general solutions \cite{GMR97}, and the DCs can be used to obtain particular solutions of the initial system of PDEs, which is a topic that was studied in Section \ref{ZCC-LS}.

The following theorem allows us to simplify the system of PDEs necessary to obtain Lie algebras of conditional symmetries of $\mathcal{S}_\Delta$ in the Clairin approach.

\begin{thm}\label{Sym} Let $L$ be a rectifiable PDE Lie algebra of vector fields and let $L'$ be an associated rectified PDE Lie algebra with a basis
$
Z_k=\partial_k+\sum_{\alpha=1}^q\phi^\alpha_k\partial_\alpha
$,
where $k=1,\ldots,p$. Consider the $n$-th order system of PDEs, $\mathcal{S}_\Delta$, described by the zeroes of the functions $\Delta^\mu\in C^\infty(J^n)$, with $\mu=1,\ldots,s$. If $\Delta^\mu(x^j,u^\alpha,D_K\phi^\alpha_j)=0$, with $\mu=1,\ldots,s$ and $|K|\leq n-1$, then $L$ is a Lie algebra of conditional Lie symmetries of $\mathcal{S}_\Delta$.
\end{thm}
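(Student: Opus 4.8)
The plan is to identify the hypothesis with the intrinsic inclusion $\mathcal{S}^n_L\subseteq\mathcal{S}_\Delta$ and then to conclude from Corollary \ref{NorRel} and Definition \ref{CLS}. First I would describe $\mathcal{S}^n_L$ in the adapted coordinates in which $L'$ has the basis $Z_k=\partial_k+\sum_{\alpha=1}^q\phi^\alpha_k\partial_\alpha$. Since $L$ is a rectifiable PDE Lie algebra, Definition \ref{Def:RLS} together with Proposition \ref{Prop:Coi} gives $\mathcal{S}^n_L=\mathcal{S}^n_{L'}$; and, as recalled around equations (\ref{ConSymSys})--(\ref{ConComplete}) in Section \ref{Sec:CS}, the characteristics of $Z_k$ are $Q^\alpha_{Z_k}=\phi^\alpha_k-u^\alpha_k$, so $\mathcal{S}^n_{L'}$ is the subset of $J^n$ cut out by $u^\alpha_{K,k}=D_K\phi^\alpha_k$ for $\alpha=1,\ldots,q$, $k=1,\ldots,p$ and $|K|\leq n-1$. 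By Corollary \ref{NorRel} (for $n>1$ this also uses Remark \ref{Uni} and the fact, guaranteed by Proposition \ref{ZCC-LS}, that (\ref{ConSymSys}) satisfies its compatibility conditions because $L'$ is Abelian), these relations are mutually consistent and cut out a genuine section of $\pi^n_0$ whose value at $(x,u)\in J^0$ assigns the number $D_K\phi^\alpha_k$ to the jet coordinate $u^\alpha_{K,k}$.

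Next I would translate the hypothesis. By the previous step, restricting $\Delta^\mu$ to the section $\mathcal{S}^n_L$ and viewing the result as a function of $(x,u)$ yields exactly $\Delta^\mu(x^j,u^\alpha,D_K\phi^\alpha_j)$. Hence the assumption that all these vanish, for $\mu=1,\ldots,s$ and $|K|\leq n-1$, is nothing but $\Delta^\mu|_{\mathcal{S}^n_L}=0$ for every $\mu$, i.e. $\mathcal{S}^n_L\subseteq\mathcal{S}_\Delta$. Consequently $\mathcal{S}_\Delta\cap\mathcal{S}^n_L=\mathcal{S}^n_L$; in particular this intersection is a submanifold of $J^n$, so the standing non-vacuous transversality assumption is met trivially here.

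Finally, since $L$ is a rectifiable PDE Lie algebra, Corollary \ref{NorRel} guarantees that $\mathcal{S}^n_L$ is an integral section of $\pi^n_0$ and that the prolongations to $J^n$ of the elements of $L$ span the tangent space to $\mathcal{S}^n_L$; in particular ${\bf j}^nZ$ is tangent to $\mathcal{S}^n_L$ for every $Z\in L$. As $\mathcal{S}_\Delta\cap\mathcal{S}^n_L=\mathcal{S}^n_L$, this says that ${\bf j}^nZ$ is tangent to $\mathcal{S}_\Delta\cap\mathcal{S}^n_L$ for every $Z\in L$, which by Definition \ref{CLS} is precisely the statement that $L$ is a Lie algebra of conditional Lie symmetries of $\mathcal{S}_\Delta$. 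One may equivalently argue through Theorem \ref{Th:Symp}: $\Delta^\mu$ is a function vanishing on $\mathcal{S}^n_L=\mathcal{S}_\Delta\cap\mathcal{S}^n_L$, and each ${\bf j}^nZ_k$ is tangent to $\mathcal{S}^n_L$ by Theorem \ref{Th:ProlSL} (as $\mathcal{D}^{L'}$ is involutive), so ${\bf j}^nZ_k(\Delta^\mu)|_{\mathcal{S}^n_L}=0$; thus $L'$ consists of conditional symmetries and Theorem \ref{Th:Symp} transfers this to $L$. The only step that takes any care is the first one --- matching the coordinate quantity $\Delta^\mu(x^j,u^\alpha,D_K\phi^\alpha_j)$ with the geometric inclusion $\mathcal{S}^n_L\subseteq\mathcal{S}_\Delta$ --- and this rests entirely on facts already established, so no genuine obstacle remains.
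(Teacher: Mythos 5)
Your proposal is correct, but it reaches the conclusion by a genuinely more direct route than the paper. You translate the hypothesis $\Delta^\mu(x^j,u^\alpha,D_K\phi^\alpha_j)=0$ into the geometric inclusion $\mathcal{S}^n_L\subseteq\mathcal{S}_\Delta$, so that $\mathcal{S}_\Delta\cap\mathcal{S}^n_L=\mathcal{S}^n_L$, and then you invoke the tangency of the prolongations of the elements of $L$ to $\mathcal{S}^n_L$ (Corollary \ref{NorRel}, ultimately Theorem \ref{Th:ProlSL}) together with Definition \ref{CLS} to finish in one step. The paper instead first reduces to the rectified algebra $L'$ via Theorem \ref{Th:Symp} and then runs a dynamical argument: through each point of $\mathcal{S}^n_{L}\cap\mathcal{S}_\Delta$ local solvability provides a holonomic solution of the constraint system, the flow of ${\bf j}^nZ$ maps such solutions to solutions, the hypothesis forces $\Delta^\mu$ to vanish along the flowed jets ${\bf j}^n\sigma_t$, and differentiating at $t=0$ yields the symmetry condition ${\bf j}^nZ\Delta^\mu|_{\mathcal{S}^n_L\cap\mathcal{S}_\Delta}=0$. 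Your static, pointwise argument is shorter and exploits Definition \ref{CLS} as literally stated; the paper's flow argument buys the explicit extra information that every solution of the differential constraints is transported by the symmetry flows within the solution set of $\mathcal{S}_\Delta$, which is the form of the statement actually used in the applications. One small imprecision: when $\mathcal{S}^n_L\subseteq\mathcal{S}_\Delta$ with $s\geq 1$ the transversality condition $T\mathcal{S}^n_L+T\mathcal{S}_\Delta=TJ^n$ in fact fails rather than being ``met trivially''; what matters, and what you correctly use, is only that the intersection equals the submanifold $\mathcal{S}^n_L$, so tangency in Definition \ref{CLS} makes sense. This does not affect the validity of your argument.
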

\begin{proof} Theorem \ref{Th:Symp} shows that  $L$ is a Lie algebra of conditional symmetries of $\mathcal{S}_\Delta$ if and only if  $L'=\langle Z_1,\ldots,Z_p\rangle $ is a rectified PDE Lie algebra of conditional symmetries of $\mathcal{S}_\Delta$. 

Let us prove that $L'$ is a Lie algebra of conditional symmetries if it satisfies the conditions of our theorem. Take a point ${\bf j}^n_x\sigma\in \mathcal{S}^n_{L}\cap \mathcal{S}_\Delta$. Since $L'$ is a rectified PDE Lie algebra, Corollary \ref{NorRel} states that  $\mathcal{S}^n_L$ is a locally solvable $n$-th order system of PDEs, every ${\bf j}^nZ$, with $Z\in L'$, is tangent to $\mathcal{S}^n_{L'}$, and the ${\bf j}^nZ$ is tangent to $\mathcal{S}^n_{L'}$. Since ${\bf j}^nZ$ is the prolongation to $J^n$ of a vector field on $X\times U$, one has that the one-parameter group of diffeomorphisms $\phi^{(n)}_t$ of ${\bf j}^nZ$ maps holonomic sections on $J^n$ into holonomic sections. The local solvability of $\mathcal{S}^n_L$ ensures that there exists a solution $u(x)$ of $\mathcal{S}^n_{L'}$ such that ${\bf j}^n\sigma$, with $\sigma(x)=(x,u(x))$, passes through ${\bf j}^n_x\sigma$.  Since ${\bf j}^nZ$, for $Z\in L'$ is tangent to $\mathcal{S}^n_{L'}$,  one has that $\phi^{(n)}_t{\bf j}^n\sigma={\bf j}^n\sigma_t$, where $\sigma_t(x)=(x,u_t(x))$ and the $u_t(x)$ are solutions of $\mathcal{S}^n_{L'}$. Hence, we can write
$$
{\bf j}^n\sigma_t=(x^j,u_t^\alpha(x),D_K\phi^\alpha_i(x,u_t(x))),\qquad |K|\leq n-1.
$$
By the assumptions we have made, $\Delta^\mu({\bf }{\bf j}^n\sigma_t)=0$ for $\mu=1,\ldots,s$. Thus,
$$
0=\frac{d}{dt}\bigg|_{t=0}\Delta^\mu({\bf }{\bf j}^n\sigma_t)={\bf j}^nZ\Delta^\mu({\bf j}^n_x\sigma)=0,\qquad \mu=1,\ldots,s.
$$
Consequently, ${\bf j}^nZ$ is tangent to the intersection $\mathcal{S}_{L'}^n\cap \mathcal{S}_\Delta$ and $L$ becomes a Lie algebra of conditional symmetries.
\end{proof}
\section{Conditional symmetries and PDE Lie systems}\label{CSPDE}

The study of conditional symmetries requires the solving of a nonlinear system of PDEs (\ref{ConComplete}), which is more complicated than the  linear one appearing in the determination of classical Lie point symmetries \cite{St89}. As mentioned previously, the solution of (\ref{ConComplete}) is still justified by the fact that we assume that Lie point symmetries are not sufficient to study the properties of the system of PDEs, $\mathcal{S}_\Delta$, under study. 

In this work, we improve the approach initiated in \cite{GR95} and suggest appropriate assumptions allowing for the description of conditional Lie symmetries through the so-called {\it PDE Lie systems}, which provides many techniques to obtain the solutions of such nonlinear PDEs, such as the reduction and integration methods (cf. \cite{CGL18,Dissertationes,Ra11}). 

Consider a family of functions on the coordinates of $u$ that is closed relative to linear combinations, products and derivatives in terms of the coordinates of $u$. For instance, consider the families 
\begin{itemize}
	\item$ \mathcal{A}_1=\{\text{Polynomials in the variables } u^\alpha\}.
	$
	\item $ \mathcal{A}_2=\{\text{Polynomials in the variables } e^{u^\alpha}\}.
	$
	\item $ \mathcal{A}_3=\{\text{Linear combinations  of the functions } 1,\cos(nu^\alpha),\sin(nu^\alpha),n\in \mathbb{N}\}.
$
	\end{itemize}

Let us consider the family $\mathcal{A}_1$. Our methods can be accomplished similarly for $\mathcal{A}_2,\mathcal{A}_3$ or other sets of families satisfying the described properties.
Assume now that  the functions $\phi^\alpha_j$, which depend on the dependent and independent coordinates,  admit a polynomial expansion in the dependent coordinates of $u$ with coefficients given by functions depending on the independent variables, namely
\begin{equation}\label{Assump}
\phi_j^\alpha=\sum_{\widetilde{K}}a^\alpha_{j\widetilde{K}}(x)u^{\widetilde{K}},\qquad j=1,\ldots,p\quad \alpha=1,\ldots,q.
\end{equation}
where $\widetilde{K}=(\widetilde{k}_1,\ldots,\widetilde{k}_q)$ with $\widetilde{k}_i\in \mathbb{N}\cup \{0\}$.  We define $u^{\widetilde{K}}=(u^1)^{\tilde{k}_1}\cdot\ldots\cdot (u^q)^{\tilde{k}_q}$, where the $a^\alpha_{j\widetilde{K}}(x)$ are
certain $x$-dependent functions, and the sum in (\ref{Assump}) is over arbitrary  multi-indices $\widetilde{K}$. In this case, the system determining conditional symmetries (\ref{ConSymSys}) reads 
\begin{equation}\label{ConSymPDELie}
\frac{\partial u^\alpha}{\partial x^j}=\sum_{\widetilde{K}}a^\alpha_{j\widetilde{K}}(x)u^{\widetilde{K}},\qquad j=1,\ldots,p,
\quad \alpha=1,\ldots,q.
\end{equation}

Instead of assuming that $\phi^\alpha_j$ is a second-order polynomial as in \cite{GR95} or making assumptions on the reductions of this system of differential equations as in \cite{GMR97}, we propose to consider that equation (\ref{ConSymPDELie})  gives rise to a general polynomial PDE Lie system \cite{GKO92}, which can be written in the form of an integrable system (in the sense of satisfying the compatibility condition) given by
\begin{equation}\label{PDELiesystem}
\frac{\partial u^\alpha}{\partial x^j}=\sum_{\beta=1}^rb^{\alpha\beta}_{j}(x)X_\beta,\qquad j=1,\ldots,p,\quad \alpha=1,\ldots,q,
\end{equation}
for certain $x$-dependent functions $b^{\alpha\beta}_{j}(x)$ and  vector fields $X_1,\ldots, X_r$ spanning a finite-dimensional Lie algebra of vector fields.  The Lie algebra spanned by $X_1,\ldots, X_r$ is called a {\it Vessiot--Guldberg Lie algebra} of the PDE Lie system \cite{CGL18,Dissertationes}. The Vessiot--Guldberg Lie algebra shows whether the PDE Lie system can be straightforwardly integrated \cite{Ra11} or allows for the construction of {\it superposition rules}, which enables ones to obtain the general solution of the PDE Lie system from a finite set of particular solutions and a set of constants \cite{Dissertationes,Ra11}.

If we restrict the expansion of $\phi^\alpha_j$ to the case where its multi-indices satisfy $|K|\leq 2$, then the right-hand side of (\ref{ConSymPDELie}) becomes a second-order polynomial, which suggests that we impose conditions on the $a^\alpha_{j\widetilde{K}}(x)$ to match the form of the so-called {\it partial differential matrix Riccati  equations} \cite{CGL18,CGM07,FGRSZ99}, which is an integrable first-order system of PDEs (in the sense of satisfying the compatibility condition) of the form
\begin{equation}\label{PRE}
\frac{\partial u}{\partial x_j}=A_j(x)+B_j(x)u+uC_j(x)+uD_j(x)u,\qquad j=1,\ldots,p,
\end{equation}
where $u$ is a vertical vector $(u^1,\ldots, u^q)\in \mathbb{R}^q$, the $A_j(x)$ belong to $ \mathbb{R}^q$, and $B_j(x),C_j(x),D_j(x)$ are $q\times q$ matrices for every $j=1,\ldots,p$. In this case, the Vessiot--Guldberg Lie algebra, $V_{\rm MR2}$, is isomorphic to $\mathfrak{sl}(q+1,\mathbb{R})$ (cf. \cite{GL17,PW83}).

Partial differential matrix Riccati equations appear, for instance, in the study of the Wess-Zumino-Novikov-Witten
(WZNW) equations \cite{CGL18}. They also appear in the study of B\"acklund transformations of relevant systems of PDEs \cite{GR95}. 

The previous conditions on the coefficients $a^\alpha_{j\widetilde{K}}$ are quite general. If $q=1$ all PDE Lie systems are such that a change of variables may map the Lie algebra spanned by the $X_\alpha$ onto $\langle \partial/\partial u,u\partial /\partial u,u^2\partial/\partial u\rangle$ (see \cite{Li80}). When $q=2$, not all PDE Lie systems can be mapped by means of a change of variables on $U$ onto a partial differential matrix Riccati equation (cf. \cite[Table 4]{GL17}). For instance, no change of variables on $U$ can map a generic PDE Lie system with a Vessiot--Guldberg Lie algebra of vector fields $V$ into a matrix Riccati equation when $\dim V>\dim V_{\rm MR2}$.
Such Lie algebras $V$ appear, for instance, in the classification of Lie algebras of vector fields in the plane \cite{GKO92,PW83}. In particular,  one can assume that the $\phi^\alpha_j$ are polynomials of any order (cf. \cite{GKO92} or \cite[Table 4]{GL17}). For instance, one may consider the PDE Lie system
$$
{\small
\begin{aligned}
\frac{\partial u^1}{\partial x^j}&=b^1_i(x)+c^1_i(x)u^1+e_i^1(x)(u^1)^2,\\
\frac{\partial u^2}{\partial x^j}&=b^2_i(x)+c^2_i(x)u^1+ d^2_i(x)u^2+e_i^1(x)r u^1u^2+f^{(1)}_i(x)(u^1)^2+\ldots+f^{(3)}_i(x)(u^1)^3,
\end{aligned}}
$$
with $j=1,\ldots,p$ and where the $x$-dependent functions are chosen so that the system satisfies the compatibility conditions. This PDE Lie system is related to a Vessiot--Guldberg Lie algebra of type ${\rm I}_{20}$ (cf. \cite{GL17}). Consequently, there exists no diffeomorphism on $\mathbb{R}^2$ on the plane mapping this PDE Lie system into a subcase of (\ref{PRE}) because, for instance, the dimension of ${\rm I}_{20}$ is larger than the dimension of $V_{\rm MR2}$. Obviously, studying PDE Lie systems more general than partial differential matrix Riccati equations offers new possibilities for the study of conditional symmetries of PDEs.

Once the form of the system (\ref{ConSymPDELie}) has been established, it is convenient to look for methods to determine Lie algebras of conditional symmetries. In particular, we first focus on the case when the Lie algebra of conditional symmetries is spanned by
$$
Z_k=\frac{\partial}{\partial x_k}+\sum_{\alpha=1}^q\phi_k^\alpha\frac{\partial}{\partial u^\alpha},\qquad k=1,\ldots,p.
$$
As already mentioned, this amounts to the fact that the Lie algebra must be Abelian. Recall also that this latter fact ensures that the differential constraints (\ref{ConSymSys}) give rise to an integrable first-order system of PDEs that therefore admits solutions for every initial condition $(x_0,u_0)$. In coordinates, this condition amounts to the system of PDEs (\ref{DiffCon}) and the substitution of the expansion (\ref{Assump}) into the $n$-th order system of PDEs $\Delta^\mu(x,u,u_K)=0$ leads to a series of conditions
$$
\Delta^\mu(x^j,u^\alpha,D_K\phi^\alpha_j)=0,\qquad \mu=1,\ldots,s.
$$
More specifically, the substitution of (\ref{Assump}) into the compatibility condition (\ref{DiffCon}) of the system (\ref{ConSymPDELie}) gives
\begin{multline}\label{Comp}
a^\alpha_{j\widetilde{K},k}u^{\widetilde{K}}+\sum_{\beta=1}^q\sum_{\widetilde{K}}a^\alpha_{j\widetilde{K}}u^{\widetilde{K}-\i_\beta}\tilde{k}_\beta\sum_{\widetilde{R}}a^\beta_{k\widetilde{R}}u^{\widetilde{R}}\\
-a^\alpha_{k\widetilde{K},j}u^{\widetilde{K}}-\sum_{\beta=1}^q\sum_{\widetilde{K}}a^\alpha_{k\widetilde{K}}u^{\widetilde{K}-\i_\beta}\tilde{k}_\beta\sum_{\widetilde{R}}a^\beta_{j\widetilde{R}}u^{\widetilde{R}}=0,
\end{multline} 
where $\widetilde{R}$ is an arbitrary multi-index for the polynomials on $u$.
To ensure that the above expressions are zero for every $u$, we must impose that the coefficients of the expansion in different polynomials in the variables $u^\alpha$ of (\ref{Assump}) vanish. This in turn can be considered as a first-order system of PDEs on the coefficients. This can be summed up by writing that 
$$
E^r(x,a^\alpha_{j\widetilde{K}},a^\alpha_{j\widetilde{K},k})=0
$$
for a certain set of $r$ functions $E^r$.
 Additionally, one has to recall that the $a^\alpha_{j\widetilde{K}}$ must still satisfy 
\begin{equation}\label{NL}
\Delta^\mu\left(x^j,u^\alpha,D_K\left(\sum_{\widetilde{K}}a^\alpha_{j\widetilde{K}}(x)u^{\widetilde{K}}\right)\right)=0,\qquad \mu=1,\ldots,s.
\end{equation}
At this point, we assume that the functions $\Delta^\mu$ are polynomials with $x$-dependent coefficients in the functions  of $\mathcal{A}_1$ and their total derivatives of arbitrary order. In view of our assumptions for $\mathcal{A}_1$, this implies that $(\ref{NL})$ can be written as a polynomial  of the form
$$
\sum_{\widetilde{R}}f_{\widetilde{R}}(x,D_Ka^\alpha_{j\widetilde{K}})u^{\widetilde{R}}=0.
$$
By assuming that the coefficients $a^\alpha_{j\widetilde{K}}$ are solutions of the family of differential equations $f_{\widetilde{R}}(x,D_Ka^\alpha_{j\widetilde{K}})=0$, for all the multi-indices $\widetilde{R}$, and (\ref{Comp}), one obtains that all solutions of the PDE Lie system (\ref{PDELiesystem}) become solutions of the higher-order system of PDEs under analysis, namely $\Delta=0$. Hence, all solutions of our PDE Lie system (\ref{PDELiesystem}) become solutions of the initial system of PDEs under investigation. Moreover, the form of the PDE Lie system gives rise to Lie symmetries of the PDE Lie system, which implies that they can be employed to solve it.

\section{Applications}\label{Appl}
This section is aimed at illustrating the results given in previous parts of the work through three different physical models: a nonlinear wave equation, a Gauss-Codazzi equation for minimal surfaces, and a generalised Liouville equation.

\subsection{Nonlinear wave equation}
Let us study the conditional symmetries of a nonlinear wave equation \cite{GR95,Sh78}
\begin{equation}\label{SG}
u_{x_1{x_2}}=b(u),
\end{equation}
where $b(u)$ is, so far, an underdetermined function depending on $u$.
Our study will specify several theoretical and practical details not explained in \cite{GR95}.
The nonlinear wave-equation (\ref{SG}) is a PDE on the second-order jet bundle $J^2$ relative to $X=\mathbb{R}^2$ and $U=\mathbb{R}$. 

Consider a rectified PDE Lie algebra $L$ of conditional symmetries generated by the vector fields
$$
Z_1=\frac{\partial}{\partial x_1}+\psi(x_1,{x_2},u)\frac{\partial}{\partial u},
\qquad Z_2=\frac{\partial}{\partial {x_2}}+\varphi(x_1,{x_2},u)\frac{\partial}{\partial u},
$$
where $\varphi$ and $\psi$ are, for the time being, some functions on $X\times U$ to be determined. In any case, $\mathcal{D}^L$ is a projectable regular distribution of rank two. Therefore, the characteristic system in $J^1$ is given by the conditions
\begin{equation}\label{SG1}
 u_{x_1}=\psi(x_1,{x_2},u),\qquad u_{x_2}=\varphi(x_1,{x_2},u)
\end{equation}
and their total derivatives
\begin{equation}\label{SGext}
\begin{gathered}
u_{{x_2}x_1}=D_{x_1}\varphi({x_1},{x_2},u),\\ u_{{x_1}x_2}=D_{x_2}\psi({x_1},{x_2},u),\,\, u_{{x_2}{x_2}}=D_{x_2}\varphi({x_1},{x_2},u),\,\, u_{{x_1}{x_1}}=D_{x_1}\psi({x_1},{x_2},u),
\end{gathered}
\end{equation}
give rise to the characteristic system $\mathcal{S}_L^2\subset J^2$. In view of Proposition \ref{ZCC-LS}, the condition $[Z_1,Z_2]=0$ amounts to the compatibility condition for (\ref{SG1}), which in turn ensures that $u_{{x_2}{x_1}}=D_{x_1}\varphi({x_1},{x_2},u)=D_{x_2}\psi({x_1},{x_2},u)=u_{{x_1}{x_2}}$, which yields that $\mathcal{S}^2_L\subset J^2$ is a section of $\pi^2_0$ by virtue of Theorem \ref{Th:SecSL}. 

In the Clairin approach to Lie algebras of conditional symmetries, it is assumed that $\varphi$ and $\psi$ are such that the first-order system of PDEs (\ref{SG1}) is integrable. In view of Proposition \ref{ZCC-LS} and Theorem \ref{Th:SecSL}, system $\mathcal{S}^2_L$ is locally solvable. In view of Corollary \ref{Simplification}, the solutions to the system (\ref{SG}), (\ref{SG1}), (\ref{SGext}) are the same as the solutions to (\ref{SG}) and (\ref{SG1}). This is why works dealing with applications of conditional symmetries in the Clairin approach restrict themselves to studying $\mathcal{S}^1_L$ instead of $\mathcal{S}^n_L$ (cf. \cite{GMR97,GR95}).

In order to use PDE Lie systems to study the conditional symmetries of (\ref{SG}), consider the expansion of $\psi$ and $\varphi$ up to second order in the dependent variable $u$. Then,
\begin{equation}
\label{expansion}
\begin{gathered}\psi=a_2({x_1},{x_2})u^2+a_1({x_1},{x_2})u+a_0({x_1},{x_2}),\\
\varphi=b_2({x_1},{x_2})u^2+b_1({x_1},{x_2})u+b_0({x_1},{x_2}), 
\end{gathered}\end{equation}
for functions $a_2({x_1},{x_2}),a_1({x_1},{x_2}),a_0({x_1},{x_2}),b_2({x_1},{x_2}),b_1({x_1},{x_2}),b_0({x_1},{x_2})$ whose particular form must be determined to ensure that $L$ is a Lie algebra of conditional symmetries

In view of (\ref{SG}) and (\ref{SG1}), we get that $D_{x_2}\psi=D_{x_1}\varphi=b(u)$. Therefore, one can restrict oneself to the particular case $b(u)=c_0+c_1u+c_2u^2+c_3u^3$ for certain constants $c_0,c_1,c_2,c_3\in \mathbb{R}$ (other more general cases could also be considered). Note that therefore the system of PDEs (\ref{SG}) satisfies the conditions of the formalism given in Section \ref{CSPDE} for $\mathcal{A}_1$ and we can try to obtain solutions to this equation by means of solutions of a PDE Lie system of the form (\ref{SG1}).

By substituting the expansion (\ref{expansion}) and the differential constraints (\ref{SG1}) into the wave-equation (\ref{SG}), one obtains 
\begin{multline*}
2a_2b_2u^3+ (2b_2a_1+b_1a_2+\partial_{x_1}b_2)u^2+(2b_2a_0+b_1a_1+\partial_{x_1}b_1)u\\+a_0b_1+\partial_{x_1}b_0=c_3u^3+c_2u^2+c_1u+c_0.
\end{multline*}
To ensure that all solutions of (\ref{SG1}) are solutions of (\ref{SG}), we assume that the coefficients in $u$ are all zero. 

Then, the compatibility condition for (\ref{SG1}) amounts to the fact that
$$
\begin{gathered}
2b_2a_1+b_1a_2+\partial_{x_1}b_2=2a_2b_1+a_1b_2+\partial_{x_2}a_2,\\ 2b_2a_0+b_1a_1+\partial_{x_1}b_1=2a_2b_0+a_1b_1+\partial_{x_2}a_1,\\ a_0b_1+\partial_{x_1}b_0=a_1b_0+\partial_{x_2}a_0.
\end{gathered}
$$ 
In view of previous equalities, we obtain
\begin{equation}\label{ExCon}
\begin{gathered}
2a_2b_2=c_3,\\ 2b_2a_1+b_1a_2+\partial_{x_1}b_2=c_2,\quad 2b_2a_0+b_1a_1+\partial_{x_1}b_1=c_1,\quad a_0b_1+\partial_{x_1}b_0=c_0,\\
 2a_2b_1+a_1b_2+\partial_{x_2}a_2=c_2,\quad 2a_2b_0+a_1b_1+\partial_{x_2}a_1=c_1,\quad a_1b_0+\partial_{x_2}a_0=c_0.
\end{gathered}
\end{equation}
If the coefficients of the expansion (\ref{expansion}) satisfy the above conditions, then Theorem \ref{Sym} ensures that $Z_1$ and $Z_2$ are conditional Lie symmetries of (\ref{SG}). It is therefore not necessary to consider the extension of the vector fields $Z_1,Z_2$ to $J^2$, to restrict them to $\mathcal{S}^2_L\cap \mathcal{S}_\Delta$, and to verify whether the functions $\varphi$ and $\psi$ give rise to conditional symmetries.

Let us provide a simple case of conditional symmetries for (\ref{SG}). A particular solution to the equations (\ref{ExCon}), e.g. $a_2=b_2=1,a_0=b_0=a_1=b_1=0$, gives rise to
$$
Z_1=\partial_{x_1}+u^2\partial_u,\qquad Z_2=\partial_{x_2}+u^2\partial_u,
$$
which span a Lie algebra of conditional symmetries of 
\begin{equation}\label{case}
u_{{x_1}{x_2}}=2u^3.
\end{equation}
Moreover, all solutions of 
\begin{equation*}
\frac{\partial u}{\partial x_1}=u^2,\qquad\frac{\partial u}{\partial x_2}=u^2,
\end{equation*}
namely $u=-1/(x_1+x_2+\lambda)$ with $\lambda\in \mathbb{R}$, are particular solutions of (\ref{case}).

We can obtain a new Lie algebra of conditional Lie symmetries by obtaining a rectifiable family of vector fields $L'$ whose $\mathcal{S}^n_{L'}$ will match $\mathcal{S}^n_L$, for instance,
$$
Y_1=\partial_{x_1}+u^2\partial_u,\qquad Y_2=e^{{x_2}/u}(\partial_{x_2}+u^2\partial_u).
$$
In fact, one gets that $[Y_1,Y_2]=-Y_2$.  It is worth noting that the new vector fields can be interesting if they leave invariant a geometric structure on $J^0$ whereas the vector fields $Z_1,Z_2$ do not (see \cite{BBHLS15} for examples of this). 

Note that we could have repeated the whole above procedure by using the set of functions $\mathcal{A}^3$. In that case, we would have added differential constraints given by a PDE Lie system with a Vessiot--Guldberg Lie algebra given by 
$$V=\left\langle \cos(u)\frac{\partial}{\partial u},\sin(u)\frac{\partial}{\partial u},\frac{\partial}{\partial u}\right\rangle.\,\,$$
In turn, this gives rise to the study of the sine-Gordon equation $u_{xt}=\frac 12\sin(2u)$, which admits pseudo-spherical surfaces and gives rise to B\"acklund transformations \cite{Ol92,RS00,Vo91}. Another set of functions $\mathcal{A}$ would have given rise to the Vessiot--Guldberg Lie algebras
$$
V=\left\langle e^u\frac{\partial}{\partial u},e^{-u}\frac{\partial}{\partial u},\frac{\partial}{\partial u}\right\rangle,\qquad
V=\left\langle {\rm ch}(u)\frac{\partial}{\partial u},{\rm sh}(u)\frac{\partial}{\partial u},\frac{\partial}{\partial u}\right\rangle
$$
and new results and Lie algebras of conditional symmetries could be derived for the obtained types of Sinh-Gordon equations \cite{RS00}.
\subsection{Gauss-Codazzi equations}
Let us now apply our formalism to the study of a class of  {\it Gauss--Codazzi equations} \cite{Sc41}, which take the form
\begin{equation}\label{GC}
\partial \bar \partial u+\frac 12 H^2 e^u-2|Q|^2e^{-u}=0,\qquad \bar \partial Q=\frac 12 (\partial H )e^u,
\end{equation}
where $\partial=(\partial/\partial x_1-{\rm i}\partial/\partial x_2)/2,\bar \partial=(\partial/\partial x_1+{\rm i}\partial/\partial x_2)/2$, $u=u(z,\bar z)$ and $H=H(z,\bar z)$ are real functions, and $Q=Q(z,\bar z)$ is a complex valued function. We will focus on the first part of the Gauss--Codazzi equations. Although it is a complex differential equation due to the appearance of $\partial$ and $\bar \partial$, it is simple to see that it can be considered as a real differential equation on the bundle $J^2$ relative to $U=\mathbb{R}$ and $X=\mathbb{R}^2$. 

In order to apply the conditional symmetry theory, we consider the differential constraints 
\begin{equation}\label{CS}
\begin{aligned}
\partial u=\eta_0+\eta_1e^{-u/2}+\eta_2e^{u/2},\qquad \bar\partial u=\bar \eta_0+\bar \eta_1e^{-u/2}+\bar \eta_2e^{u/2},
\end{aligned}
\end{equation}
whose compatibility condition amounts to
\begin{equation}
\label{ZCC}
\begin{aligned}
\bar\partial \eta_0-\partial \bar\eta_0-\eta_1\bar \eta_2+\bar \eta_1\eta_2=0,\\\,\,
\bar\partial \eta_1-\partial \bar\eta_1+\frac {\eta_0\bar \eta_1-\bar \eta_0\eta_1}2=0,\\\,\,
\bar\partial \eta_2-\partial \bar\eta_2+\frac {\eta_2\bar \eta_0-\bar \eta_2\eta_0}2=0.
\end{aligned}
\end{equation}
The system of PDEs (\ref{CS}) can easily be rewritten as a PDE Lie system of the form
$$
\begin{gathered}
\frac{\partial u}{\partial x_1}={\rm Re}(\eta_0)+{\rm Re}(\eta_1)e^{-u/2}+{\rm Re}(\eta_2)e^{u/2},\\
\frac{\partial u}{\partial x_2}={\rm Im}(\bar \eta_0)+{\rm Im}(\bar \eta_1)e^{-u/2}+{\rm Im}(\bar \eta_2)e^{u/2},
\end{gathered}
$$
related to a Vessiot--Guldberg Lie algebra $V=\langle \partial/\partial u,e^{-u/2}\partial/\partial u,e^{u/2}\partial/\partial u\rangle$. This allows us to study two-dimensional Lie algebras of conditional symmetries spanned by the vector fields
\begin{equation}\label{BL}
\begin{gathered}
\frac{\partial}{\partial x_1}+({\rm Re}(\eta_0)+{\rm Re}(\eta_1)e^{-u/2}+{\rm Re}(\eta_2)e^{u/2})\frac{\partial}{\partial u},\\ \frac{\partial}{\partial x_2}+({\rm Im}(\bar \eta_0)+{\rm Im}(\bar \eta_1)e^{-u/2}+{\rm Im}(\bar \eta_2)e^{u/2})\frac{\partial}{\partial u}.
\end{gathered}\end{equation}

To verify whether the vector fields related to our differential constraints give rise to conditional Lie symmetries, we have to substitute (\ref{CS}) in (\ref{GC}). Then, we obtain
\begin{multline}\label{Com}
\bar \partial \eta_0+\frac{\eta_2\bar \eta_1-\eta_1\bar \eta_2}2+\left( \bar\partial \eta_1-\frac {\eta_1\bar \eta_0}2\right)e^{-\frac u2}+\\\left(\bar \partial \eta_2+\frac {\bar \eta_0\eta_2}2\right)e^{\frac u2}+ \frac{H^2+\eta_2\bar \eta_2}2{e^u}-\frac{\eta_1\bar \eta_1+4|Q|^2}2e^{-u}=0.
\end{multline}
Let us assume that all coefficients accompanying the different exponentials of the variable $u$ are zero. This ensures that all solutions of (\ref{CS}) give rise to solutions of the Gauss--Codazzi equations. In view of Theorem \ref{Sym}, this also ensures that the vector fields (\ref{BL}) span a Lie algebra of conditional symmetries. Then, we obtain
\begin{eqnarray}
\bar \partial \eta_0+\frac{\eta_2\bar \eta_1-\eta_1\bar \eta_2}2=0,\label{con1}\\
 \bar\partial \eta_1-\frac {\eta_1\bar \eta_0}2=0,\label{con2}\\
 \bar \partial \eta_2+\frac {\bar \eta_0\eta_2}2=0,\label{con3}\\
 H^2+\eta_2\bar \eta_2=0,\label{con4}\\
 \eta_1\bar \eta_1+4|Q|^2=0\label{con5}.
\end{eqnarray}
Since $H$ is a real-valued function, equation (\ref{con4})  implies that $\eta_2=H=0$ and the Gauss--Codazzi equations (\ref{GC}) show that $Q=Q(z)$. It is immediate to verify that the latter fact, along with (\ref{con1})--(\ref{con3}), allows us to ensure that the compatibility conditions (\ref{ZCC}) are satisfied. 

Let us now obtain the conditional symmetries of this system. Since $\eta_2=0$, one has from (\ref{con1}) that $\eta_0=\eta_0(z)$. The equations (\ref{con1}) can easily be solved to obtain that
$$
\eta_1=\alpha(z)e^{\frac 12\int\bar \eta_0 d\bar z}
$$
for an arbitrary function $\alpha=\alpha(z)$. 
Substituting this in (\ref{con5}), we get 
$$
|Q|^2=-\frac{1}{4}\left|\alpha(z)e^{\frac 12\int \eta_0dz}\right|^2,
$$
which is acceptable since $Q=Q(z)$ is a holomorphic function. Consequently, the initial Gauss--Codazzi equation reduces under previous assumptions to
$$
\partial \bar \partial u+\frac 12\left|\alpha(z)e^{1/2\int \eta_0dz}\right|^2e^u=0,
$$
whereas
\begin{equation}\label{PDELieImm}
\begin{gathered}
\frac{\partial u}{\partial x_1}={\rm Re}(\eta_0(z))+{\rm Re}(\alpha(z)e^{\frac 12\int \bar \eta_0d\bar z})e^{-u/2},\\
\frac{\partial u}{\partial x_2}={\rm Im}(\overline {\eta_0(z)})+{\rm Im}(\overline {\alpha(z)}e^{\frac 12\int \eta_0dz})e^{-u/2},
\end{gathered}
\end{equation}
where we recall that $z=x_1+{\rm i}x_2$.
The latter is a PDE Lie system related to a solvable Vessiot--Guldberg Lie algebra $V_{2}=\langle \partial/\partial u, e^{-u/2}\partial/ \partial u\rangle$. Consequently, it is solvable (cf. \cite{Dissertationes}). In fact, a simple method to solve it goes as follows. Let us rewrite (\ref{PDELieImm}) in terms of the variable $w=e^{u/2}$ as
\begin{equation}
\begin{gathered}
\frac{\partial w}{\partial x_1}={\rm Re}(\eta_0(z))w/2+{\rm Re}(\alpha(z)e^{\frac 12\int \bar \eta_0d\bar z})/2,\\
 \frac{\partial w}{\partial x_2}={\rm Im}(\overline {\eta_0(z)})w/2+{\rm Im}(\overline{ \alpha(z)}e^{\frac 12\int \eta_0dz}).
\end{gathered}
\end{equation}
The homogeneous part of the system reads
\begin{equation}
\frac{\partial w_H}{\partial x_1}={\rm Re}(\eta_0(z))w_H/2,\qquad \frac{\partial w_H}{\partial x_2}={\rm Im}(\overline{\eta_0(z)})w_H/2
\end{equation}
and a particular solution reads
$w_H=e^{\frac 12\int({\rm Re}(\eta_0(z))dx_1+{\rm Im}(\eta_0(z)dx_2)}$. Substituting $w=w_Hw_N$ into (\ref{PDELieImm}), we obtain that 
$$
w_N=\int\frac{1}{2w_H}{\rm Re}(\alpha(z)e^{\frac 12\int \bar \eta_0d\bar z})dx_1+\int\frac{1}{2w_H}{\rm Im}(\overline{ \alpha(z)}e^{\frac 12\int \eta_0dz})dx_2
$$
and the final solution for (\ref{PDELieImm}) follows immediately. It is remarkable that $H=0$ implies that the obtained surfaces are minimal \cite{En68,We66}.

\subsection{A generalised Liouville equation}
Finally, let us study the generalised Liouville equation in $\mathbb{R}^{n+1}$ introduced by Santini \cite{Sa96}. Let $(x_0=t,x)$ be a general point in $\mathbb{R}\times \mathbb{R}^{n}$ and let $\nabla$ be the gradient operator in $\mathbb{R}^n$ relative to the Euclidean metric $\langle\cdot|\cdot\rangle$ on $\mathbb{R}^n$. Then, the generalised Liouville equation is given by 
\begin{equation}\label{GLE}
\frac{\partial}{\partial t}\left(\nabla^2 u-\frac 12 \langle \nabla u|\nabla u\rangle\right)=0,
\end{equation}
where $u=u(t,x)$ is a function on $\mathbb{R}^{n+1}$. 

We make use of the set of functions $\mathcal{A}_2$ and we assume that the conditional symmetry is given by
\begin{equation}\label{DC}
\frac{\partial u}{\partial x_j}=A^0_j(t,x)+A^+_j(t,x)e^{u/2}+A^-_j(t,x)e^{-u/2},\qquad j=0,\ldots,n.
\end{equation}
The reason for this ansatz is that the vector fields 
$$
Z_1=\frac{\partial}{\partial u},\qquad Z_2=e^{u/2}\frac{\partial}{\partial u},\qquad Z_3=e^{-u/2}\frac{\partial}{\partial u},
$$
span a Lie algebra of vector fields isomorphic to $\mathfrak{sl}(2,\mathbb{R})$. Thus, if we choose $A^0_j(t,x),A^+_j(t,x),A^-_j(t,x)$, with $j=0,\ldots,n$, so that (\ref{DC}) is integrable, then we get that (\ref{DC}) is a PDE Lie system.
The $A^0_\alpha(t,x),A^+_\alpha(t,x),A^-_\alpha(t,x)$, with $\alpha=1,\ldots,n$, give rise to three $t$-dependent vector fields  ${\bf A}^0,{\bf A}^+,{\bf A^-}$ on $\mathbb{R}^n$, respectively. Using this and substituting the differential constraints (\ref{DC})  into (\ref{GLE}), we obtain the following equation:
{\small 
\begin{multline*}
\frac{\partial}{\partial t}\left[\nabla {\bf A}^0-\frac 12 |{\bf A}^0|^2\!+\!\left(\nabla {\bf A}^+-\frac12 \langle {\bf A}^+|{\bf A}^0\rangle\right)e^{u/2}\!\right.\\+\left.\!\left(\nabla {\bf A}^--\frac 32 \langle {\bf A} ^-|{\bf A}^0\rangle\right)e^{-u/2}\!-\!|{\bf A}^-|^2e^{-u}\right]=0.
\end{multline*}}
Therefore, the particular conditions
\begin{equation}\label{Sus}
\frac{\partial}{\partial t}\left[\nabla{\bf A}^0-\frac 12 |{\bf A}^0|^2\right]=0,\qquad \nabla {\bf A}^+-\frac12 \langle {\bf A}^+|{\bf A}^0\rangle=0,\qquad {\bf A}^-=0,
\end{equation}
ensure that every solution of the DCs (\ref{DC}) gives rise to a particular solution of  (\ref{GLE}). Using that ${\bf A^-}=0$ and assuming that $A^-_0=0$, the compatibility condition for (\ref{DC}) reads
\begin{equation}\label{ComCon}
\frac{\partial A_k^0}{\partial x_j}-\frac{\partial A^0_j}{\partial x_k}=0,\qquad \frac{\partial A_k^+}{\partial x_j}-\frac{\partial A^+_j}{\partial x_k}+\frac 12(A^+_k A^0_j-A^0_k A^+_j)=0,\qquad 0\leq k<j \leq n.
\end{equation}
The first condition implies that there exists a function $u_p\in C^\infty(\mathbb{R}^{n+1})$ such that 
\begin{equation}\label{potential1}
A^0_j=\frac{\partial u_p}{\partial x^j},\qquad  j=0,\ldots,n.
\end{equation}
In particular, ${\bf A^0}=\nabla u_p$ and substituting this in the first equality of (\ref{Sus}), one obtains that 
$$
\nabla^2 u_p-\frac12|\nabla u_p|^2=\lambda(x) 
$$
and $u_p$ becomes a particular solution of (\ref{GLE}). Using (\ref{potential1}) in the second compatibility condition in (\ref{ComCon}), we obtain that
$$
\frac{\partial \left(A^+_k e^{u_p/2}\right)}{\partial x^j}-\frac{\partial \left(A^+_j e^{u_p/2}\right)}{\partial x^k}=0,\qquad 0\leq k<j\leq n.
$$
Therefore,
$$
\,\,\exists w\in C^\infty(\mathbb{R}^{n+1}),\qquad A^+_j=e^{-u_p	/2}\frac{\partial w}{\partial x^j},\qquad 0\leq j\leq n.
$$
In particular, ${\bf A}^+=e^{-u_p/2}\nabla w$. Using ${\bf A}^+$ in the second expression of (\ref{ComCon}), we obtain
\begin{equation}\label{auxiliar}
\nabla^2 w- \langle \nabla u_p|\nabla w\rangle=0.
\end{equation}
Under previous assumptions, the DCs (\ref{DC}) reduce to 
\begin{equation}\label{DC3}
\frac{\partial u}{\partial x_j}=\frac{\partial u_p}{\partial x_j}+\frac{\partial w}{\partial x_j}e^{(u-u_p)/2},\qquad j=0,\ldots,n.
\end{equation}
As this is a PDE Lie system related to the solvable Lie algebra of vector fields $\langle \partial/\partial u,e^{u/2}\partial/\partial u\rangle$, its general solution can be obtained \cite{Ra11}. In fact, it follows from (\ref{DC3}) that
\begin{equation*}\label{}
 \frac{\partial}{\partial x_j}\left(e^{-(u-u_p)/2}-w\right)=0,\qquad j=0,\ldots,n.
\end{equation*}
and
\begin{equation}\label{Sup}
u(t,x)=u_p(t,x)-2\log (w(t,x)+\lambda),\qquad \lambda\in \mathbb{R},
\end{equation}
is the general solution of the system of equations given by (\ref{GLE}) and (\ref{DC3}). This can be viewed as a generalised type of auto-B\"acklund transformation mapping solutions of the generalised Liouville equation satisfying the DC (\ref{DC}) into new solutions of the same equations. 

Let us present several particular solutions. If $n=2$ and $u_p=h(t)$ for any $t$-dependent function $h(t)$, then equation (\ref{auxiliar}) reduces to $\nabla^2w=0$ and $w$ becomes any {\it harmonic function} on $\mathbb{R}^2$, e.g. the real and imaginary parts $\mathfrak{re }f(x_1+{\rm i}x_2,t)$ and $\mathfrak{im}f(x_1+{\rm i}x_2,t)$, respectively, of a $t$-dependent holomorphic function $f(x_1+{\rm i}x_2,t)$. In this case, 
$$
u=h(t)-2\ln[\lambda +\mathfrak{re }f(x_1+{\rm i}x_2,t)+\mathfrak{im}f(x_1+{\rm i}x_2,t)].
$$

The above procedure gives a new approach to  \cite{GMR97}, explains many details not given in there, and avoids several of its {\it ad-hoc} constructions, e.g. our approach does not need any ad-hoc transformation. Let us finally explain how to obtain a particular solution proposed in \cite{GMR97} without a detailed explanation.

Every function $u_p(t,x)=f(t)+g(x)$, for arbitrary functions $f(t)$ and $g(x)$, is a particular solution of (\ref{GLE}). 
In particular, if we assume $\omega(x)=\sum_{j=1}^n\omega_j(x_j)$, then
$$
u_p(t,x)=\ln[g(t)]+\sum_{j=1}^n\ln\left[\frac{d\omega_j}{dx_j}(x_j)\right]
$$
is a particular solution of (\ref{GLE}) and $\omega(x)$ is a solution of (\ref{auxiliar}).  According to (\ref{Sup}), we obtain a solution with the freedom of $n+1$ arbitrary functions of one variable
$$
u(t,x)=\ln\left[\frac{g(t)\prod_{j=1}^n\frac{d\omega_j}{dx_j}(x_j)}{\left[g(t)+\sum_{j=1}^n\omega_j(x_j)+\lambda\right]^2}\right],\qquad \lambda\in \mathbb{R}
$$
of the system of PDEs given by (\ref{GLE}) and (\ref{DC}). This reproduces the multi-mode particular solution given in \cite{GMR97} without a detailed explanation.
\section{Conclusions}
This work has provided a geometric approach to the Clairin theory of conditional symmetries for higher-order systems of PDEs. Many of the hypotheses employed in this theory have been analysed and their geometrical meaning has been explained. In certain cases, it was stated that standard assumptions can be relaxed. Then, the role of PDE Lie systems in the description of conditional symmetries has been analysed. 

After the study of the Clairin theory of conditional symmetries, we believe that we should continue the analysis of other conditional symmetries and to consider the case of conditional symmetries given by the characteristics of non-Lie point symmetries. It seems to us that the geometric formalism developed in this work could be generalised to comprise these more general cases. 

\section{Acknowledgements}
A.M. Grundland was partially supported by the research grant ANR-11LABX-0056-LMHLabEX LMH (Fondation Math\'ematique Jacques Hadamard, France) and
an Operating Grant from NSERC, Canada. J. de Lucas and A.M. Grundland acknowledge partial support from HARMONIA 2016/22/M/ST1/00542 of the National Science Center (Poland). This work was partially accomplished during the stays of A.M. Grundland at the University of Warsaw and of J. de Lucas at the Centre de Recherches Math\'ematiques (CRM) of the University of Montr\'eal. The authors would also like to thank both institutions for their hospitality and attention during their respective stays.


\begin{thebibliography}{10}


\bibitem{AMR88}
R. Abraham, J.E. Marsden, and T. Ratiu,
{\sl Manifolds, Tensor Analysis, and Applications}, Applied Mathematical Sciences Vol. 75, Springer-Verlag, New York, 1988.

\bibitem{An96}
R.L. Anderson {\it et  al.}, {\sl CRC Handbook of Lie group analysis of differential equations Vol. 3. New trends in theoretical developments and computational methods,} CRC Press, Boca Raton, 1996. 

 \bibitem{BBHLS15}
 A. Ballesteros, A. Blasco, F.J. Herranz, J. de Lucas, and C. Sard\'on, {\it Lie-Hamilton systems on the plane: properties, classification and applications},  J. Differential Equations {\bf 258}, 2873--2907 (2015).
 
 
 \bibitem{BC69}
 G.W. Bluman and J.D. Cole, {\it The general similarity solution of the heat equation}, J. Math. Mech. {\bf 18}, 1025--1042 (1969).
 
\bibitem{CGL18}
J.F. Cari\~nena, J. Grabowski, and J. de Lucas, {\it Quasi-Lie schemes for PDEs}, arXiv:1712.02238v2.

\bibitem{CGM07}
J.F. Cari{\~n}ena, J. Grabowski, and G. Marmo.
\newblock {\it Superposition rules, Lie Theorem, and partial differential equations},
\newblock Rep. Math. Phys. {\bf 60}, 237--258 (2007).

\bibitem{Dissertationes}
 J.F. Cari\~nena and J. de Lucas, {\it Lie systems: theory, generalisations, and applications},
Dissertationes Math. {\bf 479}, 1--162 (2011).

\bibitem{CD13}
R. Cherniha and V. Davydovych, {\it Lie and conditional symmetries of the three-component diffusive Lotka--Volterra system}, J. Phys. A {\bf 46}, 185204 (2013).

\bibitem{CG01}
G. Cicogna and G. Gaeta,
{\it Partial Lie-point symmetries of differential equations},
J. Phys. A {\bf 34}, 491--512 (2001).

\bibitem{CGM04}
G. Cicogna, G. Gaeta and P. Morando,  
{\it On the relation between standard and $\mu$-symmetries for PDEs},
J. Phys. A {\bf 37}, 9467 (2004).


\bibitem{CM04}
G. Cicogna and P. Morando, 
{\it On the geometry of lambda-symmetries and PDE reduction},
J. Phys. A {\bf 37}, 6955 (2004) 

\bibitem{Cl03}
M.J. Clairin, {\it Sur quelques equations aux derivees partielles du second ordre}, Ann. Fac. Sci.  Toulouse Sci. Mat. Phys. {\bf 5}, 437--458 (1903).

\bibitem{CK89}
P.A. Clarkson and M.D. Kruskal,
{\it New similary reductions of the Boussinesq equation},
J. Math. Phys. {\bf 30}, 2201--2213 (1989).

\bibitem{Du10}
M. Dunajski, {\sl Solitons, instantons, and twistors}, Oxford Graduate Texts in Mathematics Vol. 19, Oxford University Press, Oxford, 2010. 

\bibitem{En68}
N. Enneper, {\it Nachr. K\"onigl. Gesell. Wissensch. Georg-Augustus}, Univ. G\"ottingen {\bf 12}, 258 (1868).

\bibitem{FGRSZ99}
L.A. Ferreira, J.F. Gomes, A.V. Razumov, M.V. Saveliev, and A.H. Zimerman,
{\it Riccati-Type equations, generalised WZNW Equations, and multidimensional Toda
systems}, Comm. Math. Phys. {\bf 203}, 649--666 (1999).



\bibitem{Fu93}
W. Fuschchych,
{\it Conditional symmetries of the equations of mathematical physics}, in: {\sl Modern  group analysis: advanced analytical and computational methods in mathematical physics}, Kluwer Acad. Publ., Dordrecht, 1993, pp. 231--239.


\bibitem{GM04}
G. Gaeta and P. Morando, {\it On the geometry of $\lambda$-symmetries and PDE reduction}, J. Phys. A {\bf 37}, 6955--6975 (2004). 

\bibitem{GS07}
V.A. Galaktionov and S.R. Svirshchevskii, 
{\sl Exact solutions and invariant
subspaces of nonlinear partial differential equations in mechanics and physics}, Chapman \& Hall/CRC, Boca Raton, 2007.

\bibitem{GKO92}
A. Gonz\'alez-L\'opez, N. Kamran, and P.J. Olver, {\it Lie algebras of vector fields in the real plane},
Proc. London Math. Soc. {\bf 64}, 339--368 (1992).

\bibitem{GL17}
A.M. Grundland and J. de Lucas,
{\it A Lie systems approach to the Riccati hierarchy and partial differential equations},
 J. Differential Equations {\bf 263}, 299--337 (2017).

\bibitem{GMR97}
A.M. Grundland, L. Martina, and G. Rideau, {\it Partial differential equations with differential constraints}, in: {\sl Advances in mathematical sciences: CRM's 25 years. CRM Proc. Lecture Notes, 11}, Amer. Math. Soc., Providence, 1997, pp. 135--154.

\bibitem{GR95}
A.M. Grundland and G. Rideau,
{\it Conditional symmetries for 1st order systems of PDES in the context of the Clairin method}, in: {\sl Modern group theor. methods phys.}, Kluwer Acad. Publ., Dordrecht, 1995, pp. 167--178.



\bibitem{GHZ05}
C. Gu, H. Hu, and Z. Hesheng, {\sl Darboux transformations in integrable systems}, Theory and their applications to geometry, Mathematical Physics Studies Vol. 26, Springer, Dordrecht, 2005. 


\bibitem{Gu08}
M.A. Guest,
{\sl From quantum cohomology to integrable systems},
	Oxford Graduate Texts in Mathematics Vol. 15, Oxford University Press, Oxford, 2008.
	
\bibitem{ITV93}
N.H. Ibragimov, M. Torrisi, and A. Valenti (eds.), {\sl Modern Group Analysis: Advanced analytical and computational methods in mathematical physics, Vol. I, II, III,} Springer, Dordrecht, 1993.



\bibitem{LW89}
D. Levi and P. Winternitz, {\it Nonclassical symmetry reduction: example of the Boussinesq equation}, J. Phys. A {\bf 22}, 2915--2924 (1989).




\bibitem{Li80}
 S. Lie, {\sl Theorie der Transformationsgruppen I}, Math. Ann. {\bf 16}, 441--528  (1880).


 
 \bibitem{MR03}
  C. Muriel and J.L. Romero,
  {\it C$^\infty$--symmetries and reduction
        of equations without Lie point symmetries},
 J. Lie Theory {\bf 13}, 167--188 (2003).
 
 \bibitem{MR08}
 C. Muriel and J.L. Romero,
{\it Integrating factors and $\lambda$-symmetries},
 J. Nonlinear Math. Phys. {\bf 15}, 300--309 (2008).
 

 
 \bibitem{OG00}
 A. Odzijewicz and A.M. Grundland, 
 {\it The superposition principle for the Lie type first-order PDEs}, Rep. Math. Phys. {\bf 45}, 293--306 (2000).
 
\bibitem{Ol92}
P.J. Olver, {\it Symmetry and explicit solutions of partial differential equations}, Appl. Numer. Math. {\bf 10}, 307--324 (1992).


\bibitem{Ol93}
P.J. Olver, {\sl Applications of Lie groups to differential equations}, Springer-Verlag, New York, 1993.


\bibitem{OR86}
P.J. Olver and P. Rosenau,
{\it The construction of special solutions to partial differential equations}, Phys. Lett. {\bf 114}, 107--112 (1986).

\bibitem{OR87}
P.J. Olver and P. Rosenau, {\it Group invariant solutions of differential equations}, SIAM J. Appl. Math. {\bf 47}, 263--278 (1987).

\bibitem{OV95}
P.J. Olver and E.M. Vorob'ev, {\it Nonclassical and conditional symmetries}, in: {\sl CRC Handbook of Lie group analysis vol. 3}, CRC press, London, 1995.
        
\bibitem{Palais}
R.S. Palais,
{\it A global formulation of the Lie theory of transformation groups},
{Mem. Amer. Math. Soc.} {\bf 22}, 1--123 (1957).


\bibitem{PS92}
E. Pucci and G. Saccomandi, {\it On the weak symmetry groups of partial differential equations}, J. Math. Anal. Appl. {\bf 163}, 588--598 (1992).

\bibitem{Ra11}
A. Ramos,
{\it Lie systems and applications in Physics and Control Theory}, PhD Thesis, 
arXiv:1106.3775.

\bibitem{RS00}
C. Rogers C and W.K. Schief, {\sl B\"acklund and Darboux Transformations. Geometry and Modern Applications in Soliton Theory}, Cambridge Texts in Applied Mathematics, Cambridge University Press, Cambridge, 2000. 

\bibitem{Sa96}
P.M. Santini, {\it Linear theories, hidden variables and integrables nonlinear equations}, Phys. Lett. A {\bf 212},  43--49 (1996).

\bibitem{Sc41}
A. Schwartz, {\it The Gauss-Codazzi-Ricci equations in Riemannian manifolds}, J. Math. Phys. Mass. Inst. Tech. {\bf 20}, 30--79  (1941).

\bibitem{Sh78}
W.F. Shadwick, {\it The B\"acklund problem for the equation $z_{x_1x_2}=f(z)$}, J. Math Phys. {\bf 19}, 2312--2317 (1978).


\bibitem{St89}
H. Stephani,
{\sl Differential equations. Their solution using symmetries}, Cambridge University Press, Cambridge, 1989.


\bibitem{VK99}
A.M. Vinogradov and I.S. Krasil'shchik, 
{\sl Symmetries and conservation laws for differential equations of mathematical physics},
Trans. Math. Monographs Vol. 182,  American Mathematical Society, Providence, 1999.

\bibitem{Vo91}
E.M. Vorob'ev, {\it Reduction and quotient equations for differential equatiosnt with symmetries}, Acta Appl. Math. {\bf 23}, 1--24 (1991).

\bibitem{We66}
K. Weierstrass, {\it Fortsetzung der Untersuchung \"uber die Minimalfl\"achen}, Mathematische Werke {\bf 3}, 219--248 (1866). 


\bibitem{PW83}
P. Winternitz,
{\it  Lie groups and solutions of nonlinear differential equations}, in: {\sl Nonlinear
Phenomena}, Lecture Notes in Phys. {\bf 189}, Springer-Verlag, Berlin, 1983, pp. 263--305.


\bibitem{Zi93}
S. Zidowitz, {\it Conditional symmetries and the direct reduction of partial differential equations}, in: {\sl Modern group analysis: advanced analytical and computational methods in mathematical physics (Acireale, 1992)}, Kluwer Acad. Publ., Dordrecht, 1993, pp.  387--393.


\end{thebibliography}

\end{document}